\RequirePackage{ifpdf}
\ifpdf
\documentclass[10pt,pdftex]{amsart}
\else
\documentclass[10pt,dvips]{amsart}
\fi

\usepackage[bookmarks,colorlinks=false,backref,plainpages=false]{hyperref}
\hypersetup{
  pdftitle={Mixed finite element methods for a Dirac source:
            divergence-form splitting and L\textasciicircum p error analysis},
  pdfauthor={Yueyao Wu, Shun Zhang},
  pdfsubject={65N30, 65N15, 65N50},
  pdfkeywords={mixed finite element method; Dirac measure;
               divergence-form splitting; L\textasciicircum p error estimates;
               a posteriori error estimation}
}
\usepackage[english]{babel}
\usepackage{amsmath,amssymb,amsthm,cancel}
\usepackage{graphics}
\usepackage{subfigure}
\usepackage{graphicx}
\usepackage{epsfig}
\usepackage{epstopdf}
\usepackage{framed}
\usepackage{booktabs}
\usepackage{amsaddr}

\newif\ifprintwide
\printwidetrue
\ifprintwide
\fi

\numberwithin{equation}{section}

\newtheorem{thm}{Theorem}[section]
\newtheorem{cor}[thm]{Corollary}
\newtheorem{lem}[thm]{Lemma}
\newtheorem{prop}[thm]{Proposition}
\newtheorem{assumption}[thm]{Assumption}
\theoremstyle{definition}

\newtheorem{exm}[thm]{Example}
\theoremstyle{remark}
\newtheorem{remark}[thm]{Remark}

\def\XXint#1#2#3{{\setbox0=\hbox{$#1{#2#3}{\int}$}
\vcenter{\hbox{$#2#3$}}\kern-.5\wd0}}

\renewcommand\O{\Omega}

\newcommand{\bsigma}{\mbox{\boldmath$\sigma$}}
\newcommand{\btau}{\mbox{\boldmath$\tau$}}
\newcommand{\brho}{\mbox{\boldmath$\rho$}}

\def\bxi{\boldsymbol{\xi}}

\def\ba{{\bf a}}
\def\bb{{\bf b}}
\def\bc{{\bf c}}

\def\bff{{\bf f}}

\def\bn{{\bf n}}

\def\bq{{\bf q}}

\def\bv{{\bf v}}
\def\bw{{\bf w}}
\def\bx{{\bf x}}
\def\by{{\bf y}}

\def\cE{{\cal E}}

\def\cP{{\cal P}}

\def\cR{{\cal R}}

\def\cT{{\cal T}}

\def\f12{\frac12}          %

\def\p{\partial}

\newcommand{\gradt}{\nabla\cdot}

\DeclareMathOperator{\diam}{diam}

\def\divvr{\operatorname{div}}

\newcommand{\bdm}{\begin{displaymath}}
\newcommand{\edm}{\end{displaymath}}
\newcommand{\beq}{\begin{equation}}
\newcommand{\eeq}{\end{equation}}
\newcommand{\beqa}{\begin{eqnarray}}
\newcommand{\eeqa}{\end{eqnarray}}
\newcommand{\beqas}{\begin{eqnarray*}}
\newcommand{\eeqas}{\end{eqnarray*}}

\def\O{\Omega}
\def\p{\partial}
\def\ba{\mathbf{a}}
\def\bx{\mathbf{x}}
\def\bn{\mathbf{n}}
\def\bsigma{\boldsymbol{\sigma}}
\def\btau{\boldsymbol{\tau}}

\def\brho{\boldsymbol{\rho}}
\def\bv{\mathbf{v}}
\def\bw{\mathbf{w}}
\def\bff{\mathbf{f}}
\def\bq{\mathbf{q}}

\def\bb{\mathbf{b}}
\def\jump#1{[\![#1]\!]}
\def\cT{\mathcal{T}}
\def\cN{\mathcal{N}}

\def\cE{\mathcal{E}}
\def\cP{\mathcal{P}}
\def\cR{\mathcal{R}}
\def\gradt{\nabla\!\cdot\!}
\def\divvr{\mathrm{div}}

\def\RT{\mathrm{RT}}
\def\Rone{\mathbb{R}}
\def\diam{\operatorname{diam}}

\begin{document}

\title[Mixed FEM for Dirac sources]
{Mixed Finite Element Methods for a Dirac Source:
 Divergence-Form Splitting and \texorpdfstring{$L^p$}{Lp}
 Error Analysis}

\author[Y. Wu and S. Zhang]{Yueyao Wu and Shun Zhang}
\address{Department of Mathematics, City University of Hong Kong,
Kowloon Tong, Hong Kong, China}
\email{wyueyao2-c@my.cityu.edu.hk, shun.zhang@cityu.edu.hk}
\thanks{This work was supported in part by the Research Grants Council
of the Hong Kong SAR, China, under the GRF Grant Projects
No.\ CityU 11316222 and CityU 11305025.}
\date{\today}

\keywords{mixed finite element method, Dirac measure, divergence-form
splitting, $L^p$ error estimates, a posteriori error estimation}
\subjclass[2020]{65N30, 65N15, 65N50}

\begin{abstract}
For a mixed finite element method, a Dirac source is first a failure of
duality, not of regularity: the conservation equation is tested
against a Lebesgue space, and a Dirac measure lies in the dual of none.
We therefore remove the measure from the conservation law by a
divergence-form splitting: an explicit field $\bff_2$ with
$\gradt\bff_2=\delta_{x_0}$ is subtracted and
$\bsigma=-A\nabla u-\bff_2$ is taken as the mixed unknown, so that
$\gradt\bsigma=f_1$.  Equivalently, and independently of any
discretization, the problem is rewritten as
$-\gradt(A\nabla u)=f_1+\gradt\bff_2$, with data in divergence form
generated by a field of $L^p$.  The field $\bff_2$ depends on $x_0$
alone, so no coefficient-dependent singular solution and no discrete
delta are needed, only the load vector of the $\RT_0$--$P_0$ system changes,
and $x_0$ may sit anywhere relative to the mesh.  In the unmatched
$|x-x_0|^{-1}$ regime analyzed here the modified flux belongs to
$L^p(\O)^2$ for every $1<p<2$ and not to $L^2(\O)^2$, so the flux error
analysis has to leave the Hilbert scale.  We prove a quasi-best
approximation bound for the flux, and with it that on a quasi-uniform
family the flux error is exactly of order $h^{2/p-1}$, the matching
lower bound coming already from the single element carrying the pole.  Grading the mesh there restores first-order
complexity, $N^{-1/2}$ in the number of elements, and the adaptive
computations attain it.  The scalar variable is limited only by
piecewise constant approximation of $u$, which it attains.  We also
prove a
residual norm equivalence in the Lebesgue scale, yielding a computable
$L^p$ estimator, reliable and locally efficient for the mixed flux
together with a recovered potential.
\end{abstract}

\maketitle


\section{Introduction}
\label{sec:intro}
Let $\O\subset\Rone^2$ be a bounded, simply connected Lipschitz domain,
let $x_0\in\O$, and consider the diffusion problem with a Dirac source,
\[
  -\gradt(A\nabla u)=f_1+\delta_{x_0}\ \text{ in }\O,
  \qquad u=g\ \text{ on }\p\O .
\]

\subsection*{A diagnosis}

Dirac data are commonly regarded as a problem of approximation.  Already
for the Laplacian the solution has a logarithmic singularity at the
source, so that the natural Sobolev scale is $W^{1,p}(\O)$ with $p<2$
and not $H^1(\O)$; the
natural response is to work harder at approximating it, by weighted
norms, by meshes graded at the pole, or by regularizing the data.  A
substantial literature does exactly this
\cite{Scott1973, Casas1985, Eriksson1985,
ArayaBehrensRodriguez2006, ApelBenedixSirchVexler2011,
KopplWohlmuth2014, DAngelo2012, AgnelliGarauMorin2014,
GaspozMorinVeeser2017, Bertoluzza2018, FuhrerHeuerKarkulik2022}.

For the usual mixed formulation low regularity is not the first
difficulty; before approximation is considered, the formulation already
fails at the level of duality.  The low regularity of $u$ is an
inconvenience shared by every discretization of \eqref{eq:deltaeq}, and
what the conforming treatments have in common is that they avoid this
earlier difficulty of duality: they test the equation against a space of
\emph{continuous} functions, and $\delta_{x_0}$ is a bounded
functional on $W^{1,p'}_0(\O)$ as soon as $p'>2$, because
$W^{1,p'}_0(\O)\hookrightarrow C(\overline\O)$ in two dimensions.

A mixed method cannot use that escape.  It tests the conservation
equation against the scalar space, and in a mixed method that space is
by design a Lebesgue space: no derivative is imposed on $u$, which is
the entire point of the formulation.  Since $\delta_{x_0}$ is not
represented by a locally integrable function, it lies in the dual of no
Lebesgue space, and the pairing $\langle\delta_{x_0},v\rangle$ is
undefined for a general scalar test function.  What fails is therefore
\emph{duality}, not approximation; it fails already at the continuous
level, before any mesh is introduced, and no choice of mesh, of
element, or of boundary condition repairs it.  Replacing $\delta_{x_0}$ by a discrete
representer does not repair it either, as Section~\ref{subsec:whyf2}
shows: it requires the mesh to keep $x_0$ interior to an element
\cite{HoustonWihler2012}, which conflicts with fitting the mesh to the
coefficient interfaces.

\subsection*{The remedy, and what it costs}

A failure of duality is repaired by changing the formulation, not the
mesh and not the approximation.  The measure has to leave the
conservation equation, and there is a way to remove it.  It is
the one introduced in \cite[Sec.~4]{Zhang2023} for a least-squares method
with a load in $H^{-1}(\O)$: such a load is divergence of a
square-integrable field, and subtracting that field from the physical
flux restores an $L^2(\O)$ conservation equation at no cost.  The
present paper is what happens when the same substitution is applied to data
for which no square-integrable field exists.  Let $\bff_2$ be the
explicit field \eqref{eq:f2} built from the fundamental solution of the
Laplacian, so that $\gradt\bff_2=\delta_{x_0}$, and take as the mixed
unknown the \emph{modified flux}
\[
  \bsigma:=-A\nabla u-\bff_2 ,
  \qquad\text{so that}\qquad
  \gradt\bsigma=f_1 .
\]
The Dirac measure is gone from the conservation law; the singular field
survives only in the constitutive equation, where it is paired with a
\emph{flux} test function and H\"older's inequality suffices.  The
mixed formulation is then the standard one.

Eliminating $\bsigma$ shows what the substitution is, stripped of the method
that suggested it.  The problem \eqref{eq:deltaeq} becomes
\[
  -\gradt(A\nabla u)=f_1+\gradt\bff_2\ \ \text{in }\O,
  \qquad u=g\ \ \text{on }\p\O ,
\]
which is \eqref{eq:deltaeq} itself (the identity
$\gradt\bff_2=\delta_{x_0}$ is exact, no parameter is introduced and no
datum is perturbed), but whose right-hand side is now a distribution
in divergence form generated by a field of $L^p(\O)^2$.  That is the
standard shape of data for a second-order elliptic problem; a Dirac
measure is not.  The substitution therefore does more than repair the
mixed formulation: it returns the Dirac problem to a form for which
every discretization may use, or develop, its own theory for data in
divergence form, and not a formulation built for a measure.  Conforming, discontinuous Galerkin, hybridizable, finite
volume and nonconforming methods are all in that position
(Remark~\ref{rem:other-methods}).  The mixed method analyzed here is
one application among them, and it is the one carried through because
$\bsigma$ is the unknown whose regularity the substitution changes.

What matters about $\bff_2$ is what it does \emph{not} depend
on.  It depends on $x_0$ alone: not on
$A$, so nothing is recomputed when the coefficient changes and no
coefficient-dependent singular solution need be known; not on the
equation, so the same substitution serves the full second-order
operator with lower-order terms (Section~\ref{sec:conclusions}); and not
on the discretization, so the same substitution is available whatever
the method.  At the discrete level it changes the representation of the
singular load, while the operator-dependent part of the method, the
bilinear form and whatever numerical fluxes or penalty terms it carries,
is untouched.  It is also indifferent to the position
of $x_0$: no point value of a discrete function is ever taken, so
$x_0$ may be a vertex.

What is given up is that the singular structure of the operator is not
canceled.  Section~\ref{subsec:phiA} makes the comparison exact.  The
ideal subtracted field is $-A\nabla\phi_A$, built from a singular
solution of the operator itself; it removes the singularity from the
modified flux, and under the Hilbert-scale boundary regularity stated
below it leaves $\bsigma\in H(\divvr;\O)$, where the classical
Hilbert-space theory applies unchanged.  We call the splitting \emph{matched} at $x_0$ when the
modified flux is square-integrable there; exact cancellation of the
singular flux by $\bff_2$ is one way of realizing it.  For a coefficient
which is a single constant matrix $A_0$ near $x_0$ the condition is that
$A_0$ be a scalar multiple of the identity, but for a discontinuous
coefficient it is a condition on the traces from all sides, and not one
that either trace satisfies by itself
(Section~\ref{subsec:regularity}).  In the simple
cases where $\phi_A$ is known one should use it.  In less simple ones it can still
be found, at the cost of a computation carried out afresh for each
coefficient; for an equation with lower-order terms no comparably simple
coefficient-dependent singular field is available in general, and
obtaining one amounts to solving a separate singular problem for each
operator.  The point of using $\bff_2$ is that the analysis
does not depend on which of these is the case.

The cost is therefore paid in the regularity of $\bsigma$: when the
mismatch retains a nonzero homogeneous $|x-x_0|^{-1}$ part, as it does
in every problem computed in Section~\ref{sec:numerics},
\[
  \bsigma\in L^p(\O)^2\quad\text{for every }1<p<2,
  \qquad\text{but}\qquad \bsigma\notin L^2(\O)^2 ,
\]
and if that leading part vanishes a better behavior is not excluded.
This is a statement about the \emph{norms}, not about the method: the
discrete problem is algebraically the standard $\RT_0$--$P_0$ scheme,
the exponent $p$ enters neither the matrix nor its solvability, and
accordingly we do not develop a Banach-space well-posedness theory.
But it does mean that the entire error analysis has to be redone below
the Hilbert exponent, where orthogonality, best approximation in an
inner product and the usual coercivity arguments are no longer
available.

\subsection*{The cost is affordable}

The rest of the paper establishes that.  Each of the following is
needed because the corresponding $L^2$ argument is unavailable.

\emph{(i) A quasi-best approximation bound for the flux.}  The analytic
input is a single $h$-uniform stability estimate for the weighted
projection onto the discrete solenoidal subspace, which at $p=2$ is an
orthogonal projection and for $p<2$ is the estimate of Dur\'an
\cite{Duran1988}.  Under it, Theorem~\ref{thm:flux-quasibest} gives a
quasi-best approximation bound over the equilibrated affine subspace,
with no data-oscillation term.
What Dur\'an proves is in fact a two-term estimate, in which the
divergence of the argument is measured in a negative norm; taken in that
form it gives instead Theorem~\ref{thm:flux-two-term}, whose leading
constant does not degenerate as $p\downarrow1$, the $p$-dependence that
does being moved to an oscillation term of one higher order in $h$.

\emph{(ii) A discrete $L^{p'}$ inf--sup condition for the divergence.}
The naive route to the scalar variable combines the standard
$H(\divvr)$ inf--sup condition with an inverse inequality and loses the
factor $h^{-(2/p-1)}$, which is exactly the flux rate: no convergence
at all.  Corollary~\ref{cor:discrete-Lp-infsup} instead lifts the
divergence directly in the $L^{p'}$-graph norm dual to the $L^p$-flux
error, by applying the canonical Raviart--Thomas interpolant to a right
inverse of the divergence, and yields an $L^2$-estimate for $u-u_h$ free of
any inverse inequality (Theorem~\ref{thm:scalar-basic}); a duality
argument then gains the factor $h^{2/p'}$, which converts the flux rate
$h^{2/p-1}$ into the first-order rate $h$
(Theorem~\ref{thm:scalar-refined}).

\emph{(iii) A comparison flux the canonical interpolant cannot
provide.}  We exhibit a piecewise anisotropic coefficient for which the
canonical Raviart--Thomas normal moments of $\bsigma$ are not
integrable on the edges meeting $x_0$
(Proposition~\ref{prop:moments-fail}): the degrees of freedom
themselves are undefined, not just inaccurate.  An equilibrated
comparison flux is constructed instead by local flux balance on the
patch of the pole (Section~\ref{subsec:patch}).

\emph{(iv) A barrier on quasi-uniform meshes, and grading past it.}
The rate $h^{2/p-1}$ is not an artifact of the construction:
Theorem~\ref{thm:sharp-local} attains it from below already by the free
local $L^p$-best approximation on the element carrying the pole, so it
is a property of the singularity and not of the method.  The
barrier is local, and grading removes it.  Balancing the local
contributions gives graded meshes on which the flux error is
$O(N^{-1/2})$ in the number of elements
(Proposition~\ref{prop:equidistribution}), the
first-order complexity of the space, and the adaptive computations of
Section~\ref{sec:numerics} attain it.  For the scalar variable the
duality argument shows that the discrete part $Q_hu-u_h$ is $O(h)$ and
therefore negligible, so that $\|u-u_h\|_{L^2(\O)}$ is asymptotically
the best $P_0$ approximation of $u$, of order $h|\log h|^{1/2}$ and
independent of $p$ (Corollary~\ref{cor:scalar-rate}).  That logarithm is
again a property of the singularity and not of the method, but of a
different kind: it comes from all scales between $h$ and $1$ at once,
not from the element at the pole.

\emph{(v) A residual norm equivalence, and an estimator.}  The a
posteriori theory rests on Theorem~\ref{thm:norm-equivalence}, an
equivalence in the Lebesgue scale between the error of a pair
$(\btau,v)\in H^q(\divvr;\O)\times W_0^{1,q}(\O)$ and the residual of
the first-order system it produces.  This is the counterpart of the norm
equivalence underlying first-order system least-squares methods, with
the $W^{1,q}$ well-posedness of Assumption~\ref{ass:A3} in place of the
Hilbert-space coercivity that is unavailable for $p<2$.  Filling the
flux slot with the discrete solution gives a computable $L^p$
estimator, reliable and locally efficient for the augmented
flux--potential pair, with no bubble functions, no jump terms and
no flux reconstruction (Theorem~\ref{thm:estimator}); reliability for
the flux alone follows, while efficiency for it alone would need control
of the recovery.
The estimate holds for \emph{any} $w\in W^{1,p}(\O)$ carrying the
boundary data of $u$; one is produced in
Section~\ref{subsec:potential-field}.

\subsection*{Relation to other work}

Error analysis of mixed methods in norms other than the energy norm has
a long history, global estimates in \cite{DouglasRoberts1985} and
maximum-norm estimates in \cite{GastaldiNochetto1987}.  The $L^p$
theory, in the full range $1\le p\le\infty$, is due to Dur\'an
\cite{Duran1988}, and its stability estimate is what the flux analysis
below rests on.  To
our knowledge no $H(\divvr)$-conforming mixed analysis of a Dirac
source in the $L^p$ scale with $p<2$ exists.

A second route replaces the singular load by a computable regularization
and solves with the perturbed datum.  It works, and gives
quasi-optimality and convergence rates for least-squares and
discontinuous Petrov--Galerkin methods \cite{FuhrerHeuerKarkulik2022}
and, in weaker norms, for the lowest-order mixed method itself
\cite{Fuhrer2024}.  What separates the two routes is not scope.  For a
load in
$H^{-1}(\O)$, equivalently for divergence-form data generated by an
$L^2$ field, the substitution used here settles the question outright: such a load is the
divergence of a square-integrable field, subtracting that field from the
physical flux returns the conservation equation to $L^2(\O)$ and the
modified flux to $H(\divvr;\O)$, and the standard theory then applies
with no regularization, no operator to construct and no perturbation of
the data.  That is \cite[Sec.~4]{Zhang2023}, where the substitution is
introduced and where the lower-order terms $\bb$ and $c$ are carried
along.  A Dirac source is the first case in which it cannot be done, and
the reason is exact: $\delta_{x_0}\notin H^{-1}(\O)$ says precisely
that no square-integrable field has divergence $\delta_{x_0}$.  That
forces the lifting itself, and with it the constitutive pairing it
generates, below the Hilbert exponent; whether the modified flux also
leaves $L^2(\O)^2$ depends on its cancellation against the constitutive
singularity, and in the unmatched configurations studied here it does.

The two routes then differ in where the failure of duality is
absorbed.
There it is absorbed by the datum: one solves with $Q_h^\star f$ in
place of $f$, and the distance between the two is quantified and
carried through the estimates.  Here it is absorbed by the norm: the
datum is untouched, because $\gradt\bff_2=\delta_{x_0}$ is an identity
and no operator has to be constructed, and the price appears in the
Lebesgue exponent instead.  One consequence of the second choice should
be stated, since it is what makes it necessary for our purpose.
Whenever the modified flux is not square-integrable, as in the
configurations studied here, $\|\bsigma-\bsigma_h\|_{L^2(\O)}$ is
infinite for every $\bsigma_h$ and the flux error is not a quantity an
$L^2$ theory can measure;
accordingly the point-load results of
\cite{FuhrerHeuerKarkulik2022} concern the scalar variable, and
\cite{Fuhrer2024} works with $H^{-1}(\O)$ loads, for which
$\bsigma\in L^2(\O)^2$.  In those configurations, measuring the modified
flux is what forces the exponent below $2$, and the flux is the quantity
the paper analyzes.

We do not invoke a Banach-space well-posedness theory: the discrete
problem here is the classical one \cite{BrezziFortin1991}, and $p$
enters only through the norms in which the error is measured.  It would
in any case not be the relevant relaxation, since a Banach setting
weakens the divergence slot, whereas here $\gradt\bsigma=f_1$ inherits
whatever regularity the data have and it is the field that leaves
$L^2(\O)^2$.

\subsection*{Outline}

Section~\ref{sec:splitting} introduces the splitting and establishes
the regularity of $\bsigma$.  Section~\ref{sec:mixed-reformulation}
sets out the mixed formulations.  Section~\ref{sec:discrete} sets up the
discrete problem and proves the quasi-best approximation bound for the
flux, reducing the analysis to a constrained approximation problem;
Section~\ref{sec:approx} solves that approximation problem, with
matching upper and lower bounds for the flux error.  Section~\ref{sec:scalar} transfers these estimates to
the scalar variable.  Section~\ref{sec:aposteriori} develops the a
posteriori estimator, and Section~\ref{sec:numerics} reports numerical
experiments.

\section{The Dirac problem and the singular-field splitting}
\label{sec:splitting}

The splitting itself is not specific to mixed methods; we nevertheless
follow its effect on the mixed variables, that being the discretization
analyzed below.  What is done here
is to rewrite \eqref{eq:deltaeq} as a problem with standard data, by
subtracting from the flux an explicit field whose divergence is the
measure.  The rewritten problem is available to any discretization; its
analysis is whatever theory is available, or has to be developed, for
$W^{-1,p}$ data in divergence form for that method, and
Section~\ref{subsec:scope} says what this means for conforming,
discontinuous Galerkin, finite volume and nonconforming methods.  The mixed
formulation enters only in Section~\ref{sec:mixed-reformulation}.

Let $\O\subset\Rone^2$ be a bounded, simply connected Lipschitz domain,
let $x_0\in\O$, and consider
\begin{equation}\label{eq:deltaeq}
  -\gradt(A\nabla u)=f_1+\delta_{x_0}\ \ \text{in }\O,
  \qquad
  u=g\ \ \text{on }\p\O ,
\end{equation}
where $A\in L^\infty(\O;\Rone^{2\times2})$ is symmetric and uniformly
elliptic,
\begin{equation}\label{eq:ellipticity}
  \alpha|\xi|^2\le\xi^{T}A(x)\xi\le\beta|\xi|^2
  \qquad\forall\,\xi\in\Rone^2,\ \text{a.e. }x\in\O .
\end{equation}
Throughout, $1<p<2$ and $p'=p/(p-1)\in(2,\infty)$, and the standing data
assumptions are $f_1\in L^2(\O)$ and $g\in W^{1-1/p,p}(\p\O)$.
The space $W^{1-1/p,p}(\p\O)$ is the trace space of $W^{1,p}(\O)$, which
is where the potential is sought; since $p<2$ it is strictly larger than
$H^{1/2}(\p\O)$.
As $\O$ is bounded and $p<2$, we also have
$L^2(\O)\hookrightarrow L^p(\O)$, so both exponents are available for
$f_1$.  The flux analysis of Sections~\ref{sec:discrete} and
\ref{sec:approx} and the a posteriori estimate of
Section~\ref{sec:aposteriori} use only $f_1\in L^p(\O)$; the $L^2$
regularity is used in Proposition~\ref{prop:mixed-identities}, where it
fixes the test space of the conservation identity, and in
Section~\ref{subsec:scalar-refined} and Corollary~\ref{cor:scalar-rate},
where the term $\|f_1-Q_hf_1\|_{L^2(\O)}$ appears; it is used once more
in the comparison with the ideal splitting in
Section~\ref{subsec:phiA}.

\subsection{Why the difficulty lies in the formulation}
\label{subsec:whyf2}

Write the physical flux as $\bsigma_{\rm phys}:=-A\nabla u$, so that
$\gradt\bsigma_{\rm phys}=f_1+\delta_{x_0}$.

At the continuous level this pair has no mixed formulation of the usual
kind.  A mixed method puts the flux in a graph space and tests the
conservation equation against the scalar space, which is by design a
Lebesgue space: no derivative is imposed on $u$, and that is the whole
point of the formulation.  But $\delta_{x_0}$ is not represented by a
locally integrable function, so
\begin{equation}\label{eq:delta-not-in-Ls}
  \delta_{x_0}\notin L^{r}(\O)=\bigl(L^{r'}(\O)\bigr)'
  \qquad\text{for every }1<r\le\infty ,
\end{equation}
and therefore $\gradt\bsigma_{\rm phys}\notin L^{r}(\O)$ for any $r$: the physical flux
lies in no graph space $H^{r}(\divvr;\O)$, and the pairing
$\langle\delta_{x_0},v\rangle$ is undefined for a general
$v\in L^{r'}(\O)$.  The measure can be kept only by requiring the
scalar test space to consist of functions possessing a value at $x_0$,
which is no longer a mixed formulation of the kind we wish to
discretize.  On $P_0(\cT_h)$ such a value can be assigned only after
saying how $x_0$ sits in the mesh: it is unambiguous when $x_0$ is
interior to one element, and not otherwise.

A scheme can nevertheless be written, and two standard ways of doing so
act on the datum: one keeps the point functional, which a continuous
space evaluates exactly \cite{Scott1973}, or replaces it by a discrete
representer supported on the element containing $x_0$
\cite{HoustonWihler2012}; the other by a computable
regularization \cite{FuhrerHeuerKarkulik2022, Fuhrer2024}.  Both work.
The first is exact where the point value is available, that is on a
conforming space and on a discontinuous one when $x_0$ lies inside an
element, and then nothing is perturbed at all; the second replaces the
datum and quantifies the perturbation.

One consequence of the first is worth stating, because it bears on the
configuration studied here.  It presupposes that $x_0$ is
interior to an element, since otherwise $v_h(x_0)$ is not defined; if
$x_0$ is taken to be the centroid of an initial element, that position
is maintained under red refinement \cite{BrennerCarstensen2017}, the
middle sub-triangle having the same centroid as its parent.  But when interfaces meet at $x_0$, as in the
example of Section~\ref{subsec:sector}, an element containing $x_0$ in
its interior is cut by them, so the mesh is not fitted to the
coefficient as assumed throughout Section~\ref{sec:approx}.  Fitting the
mesh instead makes $x_0$ a vertex.  The two requirements are in
conflict.

There is a third place to put the difficulty, and it is the one used
here: leave the datum alone and change the unknown.  The next subsection
carries out this divergence-form splitting.  For a Dirac source in two dimensions every exact
divergence lifting lies outside $L^2(\O)^2$; whether the modified flux
does so as well is a separate question, settled by its cancellation
against the constitutive singularity.

\subsection{The splitting and the modified flux}
\label{subsec:splitting-def}

We avoid all of this by moving the measure out of the conservation
equation, following \cite[Sec.~4]{Zhang2023}.  What is available there is
not available here, and the reason is sharp.  The divergence maps
$L^2(\O)^2$ onto $H^{-1}(\O)$ (for $F\in H^{-1}(\O)$ let
$z\in H^1_0(\O)$ solve $-\Delta z=F$, and take $\bff:=-\nabla z$), so
a load in $H^{-1}(\O)$ can always be written as the divergence of a
square-integrable field, and subtracting that field costs nothing.  That
is what is done in \cite[Sec.~4]{Zhang2023}, where a least-squares method
is set up for a load $f_1-\gradt(A\bff_2)$ with $f_1\in L^2(\O)$ and
$\bff_2\in L^2(\O)^2$, at no cost in regularity.  A Dirac measure admits no such field:
$\delta_{x_0}\notin H^{-1}(\O)$ in two dimensions is precisely the
statement that no $\bff\in L^2(\O)^2$ has $\gradt\bff=\delta_{x_0}$.
Whatever field is subtracted must therefore lie outside $L^2(\O)^2$, so
that for the explicit choice below the constitutive load is read through
$L^p$--$L^{p'}$ duality with $1<p<2$, whatever else happens.  That by
itself does not put the modified flux outside $L^2(\O)^2$: what decides
that is how the subtracted singularity meets the constitutive one.  We take the explicit one: let
\begin{equation}\label{eq:f2}
  \bff_2(x):=\frac{1}{2\pi}\frac{x-x_0}{|x-x_0|^2},
  \qquad\text{so that}\qquad
  \gradt\bff_2=\delta_{x_0}\ \text{ in }\mathcal D'(\Rone^2),
\end{equation}
and $\bff_2\in L^p(\O)^2$ for every $1<p<2$.  Define the
\emph{modified flux}
\begin{equation}\label{eq:modified-flux}
  \bsigma:=-A\nabla u-\bff_2 ,
\end{equation}
so that \eqref{eq:deltaeq} is equivalent to the first-order system
\begin{equation}\label{eq:first-order-system}
  A^{-1}\bsigma+\nabla u=-A^{-1}\bff_2,
  \qquad
  \gradt\bsigma=f_1 ,
\end{equation}
in which no Dirac distribution acts on the scalar test space.  We call
\eqref{eq:f2}--\eqref{eq:modified-flux} the \emph{divergence-form
splitting} of the Dirac source: the measure is moved out of the
conservation law into the prescribed field $\bff_2$, and what is left
for the flux unknown is $\gradt\bsigma=f_1$.  What has
been bought is a change of pairing.  The conservation equation in
\eqref{eq:first-order-system} has right-hand side $f_1$ and may be
tested against the full Lebesgue space, as a mixed method requires.  The singular field survives
only in the constitutive equation, where it meets a \emph{flux} test
function, and there H\"older's inequality applies:
\begin{equation}\label{eq:f2-holder}
  \bigl|(A^{-1}\bff_2,\btau)\bigr|
  \le\alpha^{-1}\|\bff_2\|_{L^p(\O)}\|\btau\|_{L^{p'}(\O)}
  \qquad\forall\,\btau\in L^{p'}(\O)^2 .
\end{equation}
No continuity of $\btau$ is required, and no point value of it is ever
taken; \eqref{eq:f2-holder} holds for every $\btau_h\in\RT_0(\cT_h)$
whatever the position of $x_0$ relative to the mesh, and in particular
when $x_0$ is a vertex.  A pairing that needed a test function
continuous at a point has been traded for one that needs only
integrability.  It is worth noting that the two constructions become
admissible at the same threshold and for the same reason: $\bff_2\in
L^p(\O)^2$ and $\delta_{x_0}\in W^{-1,p}(\O)$ both hold throughout
$1<p<2$, the borderline being the failure of $W^{1,2}$ to embed into
$C^0$ in two dimensions.

Eliminating $\bsigma$ from \eqref{eq:first-order-system} returns a
scalar equation,
\begin{equation}\label{eq:divform}
  -\gradt(A\nabla u)=f_1+\gradt\bff_2\ \ \text{in }\O,
  \qquad
  u=g\ \ \text{on }\p\O ,
\end{equation}
which is \eqref{eq:deltaeq} again, with the measure written in
divergence form.  This is the most portable consequence of the
splitting, and it is worth stating separately from the mixed method
that this paper analyzes.  Equation \eqref{eq:divform} is exactly
equivalent to \eqref{eq:deltaeq}: no parameter is introduced, no datum
is perturbed, and $\bff_2$ is given in closed form by \eqref{eq:f2}.
But its right-hand side is a distribution in divergence form generated
by a field of $L^p(\O)^2$, which is the standard shape of data for a
second-order elliptic problem, whereas a Dirac measure is not.  A method
applied to \eqref{eq:divform} therefore needs no theory developed for
measure data; what it needs is a theory for data in $W^{-1,p}$ of
divergence form.  Conforming, discontinuous Galerkin,
finite volume and nonconforming discretizations are all in that
position, and Remark~\ref{rem:other-methods} says what the passage
looks like for each.  The mixed method is one application among them;
it is the one carried through here because $\bsigma$ is the unknown
whose regularity the splitting changes, and the analysis below is
accordingly carried out on \eqref{eq:first-order-system}.  The weak form
of \eqref{eq:divform} is
\begin{equation}\label{eq:divform-weak}
  (A\nabla u,\nabla v)=(f_1,v)-(\bff_2,\nabla v)
  \qquad\forall\,v\in W_0^{1,p'}(\O),
\end{equation}
again with $u\in W^{1,p}(\O)$ and $u|_{\p\O}=g$.  The singular field
meets a gradient here just as it meets a flux in
\eqref{eq:first-order-system}, so the last term is an $L^p$--$L^{p'}$
duality and H\"older's inequality bounds it exactly as in
\eqref{eq:f2-holder}.  The distributional action of $\delta_{x_0}$ is
thereby represented by an $L^p$--$L^{p'}$ pairing, so that no point
evaluation and no measure-valued load is left in the formulation.

\subsection{The ideal splitting}
\label{subsec:phiA}

The field $\bff_2$ is not the only admissible choice in
\eqref{eq:modified-flux}: any field whose divergence is $\delta_{x_0}$
will do, and there is one that is better.  Let $\phi_A$ be a known
particular solution of $-\gradt(A\nabla\phi_A)=\delta_{x_0}$ in $\O$,
no boundary condition being imposed on it, and subtract the field
$-A\nabla\phi_A$, whose divergence is $\delta_{x_0}$.  The modified flux
is then $\bsigma=-A\nabla u+A\nabla\phi_A=-A\nabla w$, where
$w:=u-\phi_A$,
with $-\gradt(A\nabla w)=f_1\in L^2(\O)$.  The Dirac singularity is
removed from the flux as well as from the conservation law; under the
Hilbert-scale boundary regularity $g-\phi_A|_{\p\O}\in H^{1/2}(\p\O)$,
which the standing assumption $g\in W^{1-1/p,p}(\p\O)$ does not supply,
one has $w\in H^1(\O)$ and hence $\bsigma\in H(\divvr;\O)$, the
classical Hilbert-space mixed theory applies verbatim to the pair
$(\bsigma,w)$, and $u_h:=w_h+\phi_A$ is recovered at the end.  None of the analysis of the
present paper would be needed.

When $\phi_A$ is available this is what one should do.  For $A=I$,
$\phi_I(x)=-\tfrac1{2\pi}\log|x-x_0|$; if $A\equiv A_0$ is a constant
symmetric positive definite matrix, then
\begin{equation}\label{eq:frozen}
  \phi_{A_0}(x)=-\frac{1}{2\pi\sqrt{\det A_0}}
  \log\Bigl(\sqrt{(x-x_0)^{T}A_0^{-1}(x-x_0)}\Bigr),
\end{equation}
which for $A_0=\alpha_0I$ reduces to $\alpha_0^{-1}\phi_I$.  If $A=A_0$
only in a neighborhood of $x_0$, the same formula gives not $\phi_A$ but
its principal singular part there, which determines the leading mismatch
considered below.  The corresponding \emph{field} is
\[
  -A_0\nabla\phi_{A_0}(x)
  =\frac{1}{2\pi\sqrt{\det A_0}}\,
   \frac{x-x_0}{(x-x_0)^{T}A_0^{-1}(x-x_0)} ,
\]
which equals $\bff_2$ precisely when $A_0^{-1}=(\det A_0)^{-1/2}I$, that
is, when $A_0$ is a scalar multiple of the identity.  If $A\equiv\alpha_0I$
throughout $\O$ the splitting used in this paper is then already the
ideal one, whatever the value of $\alpha_0$; if $A=\alpha_0I$ only near
$x_0$, what follows is that $\bff_2$ matches the principal singular flux
there.  In either case the normalization of $\bff_2$ needs no
adjustment, because the flux of the fundamental solution carries the
unit mass however the coefficient is scaled.
  If $A$ is discontinuous and $x_0$ lies on a straight
interface separating $A=\alpha_-I$ from $A=\alpha_+I$, a flux balance on
a small circle gives the radially symmetric exact local solution
\begin{equation}\label{eq:interface-phiA}
  \phi_A(x)=-\frac{1}{\pi(\alpha_-+\alpha_+)}\log|x-x_0| ,
\end{equation}
and the two conditions one might compare it against are not the same
one.  The singular \emph{potential} coincides with $\phi_I$ when
$\alpha_-+\alpha_+=2$.  The singular \emph{flux}
$-A\nabla\phi_A=\frac{\alpha_\pm}{\pi(\alpha_-+\alpha_+)}
(x-x_0)/|x-x_0|^2$ coincides with $\bff_2$ on both sides when
$2\alpha_\pm=\alpha_-+\alpha_+$, that is when $\alpha_-=\alpha_+$,
whatever their common value.  For this model it is the second condition
that decides matching, since any nonzero flux mismatch here is
homogeneous of degree $-1$ and so not square-integrable; matching a
singular potential is not matching a singular flux, and
Section~\ref{subsec:regularity} computes the mismatch the first
condition leaves.

Beyond such cases $\phi_A$ is not read off a formula but computed.  It
can still be found when the coefficient depends only on the angle near
$x_0$:
Section~\ref{subsec:sector} does this for an anisotropic coefficient
with two sectors meeting at the pole, where the ansatz
$-\gamma\log r+U(\theta)$ reduces the problem to an ordinary
differential equation in $\theta$ and two constants are then fixed by
periodicity and by the Dirac mass.  Each coefficient requires its own
such computation, and one may or may not wish to carry it out.
For an equation with lower-order terms, that is, for
$-\gradt(A\nabla u+\bb\,u)+cu=f_1+\delta_{x_0}$, no comparably simple
coefficient-dependent closed form is available in general: the
lower-order terms introduce length scales, the ansatz above no longer
closes, and already for $A=I$ with constant $\bb$ and $c$ the
fundamental solution ceases to be elementary.  Producing an ideal
singular field would then amount to solving a separate singular problem,
which is what a coefficient-independent splitting is meant to avoid.

\begin{remark}[What the subtraction changes]
\label{rem:what-subtraction-changes}
It is worth setting out what the choice of subtraction does to the two
unknowns.  There are three
arrangements, all of them producing the same $\RT_0$--$P_0$ matrix and
differing only in the load:

\begin{center}
\begin{tabular}{@{}lll@{}}
\toprule
subtracted from the flux & mixed unknowns
  & Dirac singularity carried by \\
\midrule
$\bff_2$, this paper
  & $(-A\nabla u-\bff_2,\ u)$
  & the potential; the flux too if the\\
  & & mismatch is not square-integrable \\
$-A\nabla\phi_A$, potential kept
  & $(-A\nabla w,\ u)$ & the potential \\
$-A\nabla\phi_A$, potential changed
  & $(-A\nabla w,\ w)$ & neither \\
\bottomrule
\end{tabular}
\end{center}

\noindent
\end{remark}

The purpose of this paper is to provide an analysis that does not rest
on any of this.  The field $\bff_2$ of \eqref{eq:f2} is explicit and
independent of $A$: it is not required to reproduce the singular
structure of the operator, only to carry the correct Dirac mass, and it
is used whether or not $\phi_A$ can be computed.  The gain is largest
for methods whose scalar test space is not continuous at $x_0$: for a
conforming method the two loads coincide, as
Remark~\ref{rem:other-methods} says, and subtracting $\phi_I$ instead
of $\phi_A$ cancels the principal logarithmic singularity exactly when
$\phi_I$ is the principal singular potential of the operator: for a
single constant matrix $A_0$ only for $A_0=I$, and for the interface
coefficient of \eqref{eq:interface-phiA} when $\alpha_-+\alpha_+=2$,
which is not the flux-matching condition $\alpha_-=\alpha_+$.  Matching
the potential is not matching the flux, and it is the flux that the
mixed method carries.  The price is that the
singular structure is not canceled, so that $\bsigma$ need not lie in
$H(\divvr;\O)$, and the whole of the error analysis has to be carried
out in a Lebesgue scale below the Hilbert exponent.  How far below is
the subject of Section~\ref{subsec:regularity}.

\subsection{Regularity of the modified flux}
\label{subsec:regularity}

If $A=\alpha_0I$ in a neighborhood of $x_0$, then by
Section~\ref{subsec:phiA} the field $\bff_2$ matches the principal
singular flux there, and the local difficulty created by the Dirac
source disappears; under the boundary regularity stated in that
subsection the modified flux then lies in $H(\divvr;\O)$, and the
classical mixed theory applies after passing to the formulation in the
regular remainder $w$, the third row of
Remark~\ref{rem:what-subtraction-changes}.  In general the cancellation is only
partial.  What is left is the singular mismatch
$\bq_{\rm mis}:=-A\nabla\phi_A-\bff_2$: writing $u=\phi_A+w$ as above,
\begin{equation}\label{eq:sigma-split}
  \bsigma=-A\nabla u-\bff_2=\bq_{\rm mis}-A\nabla w ,
\end{equation}
the second term being the flux of the remainder.  In the interface
situation \eqref{eq:interface-phiA} with $\alpha_-\ne\alpha_+$ one
computes explicitly
\[
  \bq_{\rm mis}(x)=\Bigl(\frac{\alpha(x)}{\pi(\alpha_-+\alpha_+)}
                    -\frac{1}{2\pi}\Bigr)
  \frac{x-x_0}{|x-x_0|^2},
\]
which is nonzero on both sides of the interface.  A homogeneous
$|x-x_0|^{-1}$ field is not square-integrable in two dimensions and
cannot be cancelled by a remainder whose flux is, so whenever
$A\nabla w\in L^2(\O)^2$, in particular in the model problems of
Section~\ref{sec:numerics}, where $f_1=0$ and the datum is chosen so
that $w=0$ and $\bsigma=\bq_{\rm mis}$,
\begin{equation}\label{eq:sigma-Lp-only}
  \bsigma\in L^p(\O)^2\quad\forall\,1<p<2,
  \qquad
  \bsigma\notin L^2(\O)^2 .
\end{equation}
The standard $H(\divvr;\O)$ framework is therefore unavailable, while
the divergence constraint $\gradt\bsigma=f_1$ remains perfectly regular.
This is the situation the paper is about.

The example also fixes the correct form of the criterion.  Here $A$ is a
scalar matrix on each side of the interface, so no value of $A$ at $x_0$
is a non-scalar matrix, and for a discontinuous coefficient $A(x_0)$
need not be a single matrix at all, yet $\bsigma\notin L^2(\O)^2$.
What decides the question is the modified flux itself.  We call the
splitting \emph{matched} when the subtraction returns $\bsigma$ to the
Hilbert scale, that is when $\bsigma$ is square-integrable near $x_0$;
this refers to $\bsigma=-A\nabla u-\bff_2$ alone and to no choice of
$\phi_A$.  For a coefficient which is a single constant matrix $A_0$
near $x_0$ it reduces, by Section~\ref{subsec:phiA}, to the requirement
that $A_0$ be a scalar multiple of the identity, while
\eqref{eq:interface-phiA} shows that the reduction is genuinely local to
that case, the condition there coupling the traces from the two sides.
A nonzero homogeneous $|x-x_0|^{-1}$ part of $\bq_{\rm mis}$ therefore
prevents matching.  Its vanishing is necessary, and is not sufficient
without an $L^2$ bound on the mismatch that remains.  The question does
not arise in the setting of \cite[Sec.~4]{Zhang2023}: there the
lifting used in the formulation is in $L^2(\O)^2$, so it leaves $\bsigma$
square-integrable and the splitting is matched.

A scalar coefficient that is Lipschitz in a neighborhood of $x_0$ is
matched, and the
calculation is short.  Put $\alpha_0:=\alpha(x_0)$ and
$\phi_0:=\alpha_0^{-1}\phi_I$, so that $-\nabla\phi_I=\bff_2$.  The
mismatch left by that singular field is
\begin{equation}\label{eq:lipschitz-mismatch}
  -\alpha\nabla\phi_0-\bff_2=\frac{\alpha-\alpha_0}{\alpha_0}\,\bff_2 ,
\end{equation}
which is bounded near $x_0$ because $|\alpha(x)-\alpha_0|\le C|x-x_0|$
while $|\bff_2(x)|=\frac1{2\pi}|x-x_0|^{-1}$; in particular it has no
homogeneous $|x-x_0|^{-1}$ part.  Writing $u=\phi_0+w$ gives
$\bsigma=(-\alpha\nabla\phi_0-\bff_2)-\alpha\nabla w$ with
$-\gradt(\alpha\nabla w)=f_1-\gradt(-\alpha\nabla\phi_0-\bff_2)$, whose
data lie in $H^{-1}$ near the pole, since
\eqref{eq:lipschitz-mismatch} is in $L^2_{\rm loc}$.  Standard local
Hilbert-scale regularity for a uniformly elliptic scalar coefficient
that is Lipschitz in a neighborhood of $x_0$ then gives
$w\in H^1_{\rm loc}$, hence $\alpha\nabla w\in L^2_{\rm loc}$, so
$\bsigma$ is square-integrable at the pole.  Constant $\alpha$ is the
special case in which \eqref{eq:lipschitz-mismatch} vanishes.

\subsection{Other discretizations}
\label{subsec:scope}

\begin{remark}[Other discretizations]
\label{rem:other-methods}
Neither \eqref{eq:first-order-system} nor \eqref{eq:divform} is tied to
the method used below.  Both are exactly equivalent to
\eqref{eq:deltaeq}, in both the measure has been traded for the field
$\bff_2\in L^p(\O)^2$, and a discretization may be built on whichever of
the two suits it: on the first-order system if it carries a flux
unknown, on the scalar equation if it does not.  The gain is largest for
methods whose scalar test space fails to be continuous at $x_0$, which
is where the difficulty stated in \eqref{eq:delta-not-in-Ls} is felt:
discontinuous Galerkin and hybridizable DG methods, finite volume
schemes, and nonconforming elements such as Crouzeix--Raviart, whose
functions are single-valued at edge midpoints but not at vertices.  In
each case the change is to the right-hand side only: the bilinear form,
the numerical fluxes and the penalty terms are untouched, so in an
existing code only the load assembly changes.

For a primal discontinuous Galerkin method, for instance, testing
$\gradt\bsigma=f_1$ elementwise against $v_h$ and inserting
$\bsigma=-A\nabla u-\bff_2$ replaces the formal point-source load
$(f_1,v_h)+v_h(x_0)$, whose second term is ambiguous for a
discontinuous $v_h$, by the well-defined
\begin{equation}\label{eq:dg-load}
  (f_1,v_h)-\sum_{K\in\cT_h}(\bff_2,\nabla v_h)_K
  +\sum_{e\in\cE_h}\bigl\langle\bff_2\cdot\bn_e,\jump{v_h}\bigr\rangle_e ,
\end{equation}
where $\bn_e$ is a fixed unit normal on each edge and
$\jump{v_h}:=v_h^+-v_h^-$ with $v_h^\pm$ the traces from the two sides,
$\bn_e$ pointing from $+$ to $-$; on a boundary edge $\jump{v_h}:=v_h$
and $\bn_e$ is outward.  This is \eqref{eq:divform-weak} with the
integration by parts performed elementwise.  Every term is finite: the volume integrals by \eqref{eq:f2-holder}
applied elementwise, and the edge integrals because $\bff_2\cdot\bn_e$
vanishes identically on any straight edge containing $x_0$ (there
$x-x_0$ is parallel to the edge, so $\bff_2$ is tangential to it) and
is bounded on every other edge.  The difficulty is therefore not
relocated from the elements to the skeleton.

Two limits of the claim should be stated.  First, for a
\emph{conforming} method the two right-hand sides give the same discrete
load: if $v_h\in W^{1,p'}_0(\O)$ then the jumps in \eqref{eq:dg-load}
vanish and $-\int_\O\bff_2\cdot\nabla v_h=\langle\gradt\bff_2,v_h\rangle
=v_h(x_0)$, so one recovers the classical load vector exactly.  Nothing
changes in the assembly, and the point evaluation was legitimate there
to begin with, $v_h$ being continuous.  What \eqref{eq:divform-weak}
still supplies is the $L^p$--$L^{p'}$ duality in place of the
$W^{-1,p}$--$W_0^{1,p'}$ one, so that the analysis may be run with a
datum in divergence form and not with a measure; whether that is
worth doing depends on the norm in which the conforming method is
analyzed.  Second, and more
substantially, the passage to \eqref{eq:divform} repairs the
formulation, not the regularity.  The splitting does not change $u$, so
whatever point singularity it has remains, logarithmic, with a
gradient of order $|x-x_0|^{-1}$, in the model configurations analyzed
below, so that the approximation difficulties the point singularity
creates remain, although each of these methods would need an error
analysis of its own, in a norm adapted to it and with its own
approximation mechanism.

\end{remark}


\section{Mixed formulation of the Dirac problem}
\label{sec:mixed-reformulation}
We now set out the mixed formulations used in the finite element
analysis.
Following Douglas and Roberts \cite{DouglasRoberts1985}, we assume that
\eqref{eq:deltaeq} has a unique solution and derive the mixed
formulation from it, as a reformulation of the original elliptic
problem and not as a first-order system with a well-posedness theory
of its own; no continuous inf--sup analysis is needed below.

For \(1<r<\infty\) we write
\begin{equation}\label{eq:Hr-div}
\begin{aligned}
H^{r}(\divvr;\O)&:=\{\btau\in L^{r}(\O)^2:\gradt\btau\in L^{r}(\O)\},\\
\|\btau\|_{H^{r}(\divvr;\O)}
 &:=\|\btau\|_{L^{r}(\O)}+\|\gradt\btau\|_{L^{r}(\O)} ;
\end{aligned}
\end{equation}
the constitutive identity below uses \(r=p'\), while
Sections~\ref{sec:approx} and \ref{sec:aposteriori} use \(r=p\).
On a bounded Lipschitz domain, the normal trace extends continuously from \(H^{p'}(\divvr;\O)\) to \(W^{-1/p',p'}(\p\O)\), and Green's formula
\begin{equation}\label{eq:green-Lp}
\langle\btau\cdot\bn,\varphi\rangle_{\p\O}=(\btau,\nabla\varphi)+(\gradt\btau,\varphi)
\end{equation}
holds for \(\btau\in H^{p'}(\divvr;\O)\) and
\(\varphi\in W^{1,p}(\O)\); see
\cite[Sec.~1.2.6]{BernardiEtAl2024}.  The trace space of
\(W^{1,p}(\O)\) is \(W^{1-1/p,p}(\p\O)=W^{1/p',p}(\p\O)\), so the
boundary pairing below is defined.

\begin{prop}[Mixed formulations]
\label{prop:mixed-identities}
Let \(u\in W^{1,p}(\O)\), \(u|_{\p\O}=g\), be the assumed solution of
\eqref{eq:deltaeq}, and set \(\bsigma:=-A\nabla u-\bff_2\), where
\(\gradt\bff_2=\delta_{x_0}\).  Then \(\bsigma\in L^p(\O)^2\) and
\(\gradt\bsigma=f_1\in L^2(\O)\), so that in particular \(\bsigma\in H^{p}(\divvr;\O)\), and
\begin{equation}\label{eq:mixed-continuous}
\begin{aligned}
(A^{-1}\bsigma,\btau)-(\gradt\btau,u)&=-(A^{-1}\bff_2,\btau)-\langle\btau\cdot\bn,g\rangle_{\p\O}
&&\forall\,\btau\in H^{p'}(\divvr;\O),\\
(\gradt\bsigma,v)&=(f_1,v)
&&\forall\,v\in L^{2}(\O).
\end{aligned}
\end{equation}
Conversely, let \(u\in W^{1,p}(\O)\) with \(u|_{\p\O}=g\) and \(\bsigma\in L^p(\O)^2\) with \(\gradt\bsigma\in L^2(\O)\). If \((\bsigma,u)\) satisfies \eqref{eq:mixed-continuous}, then \(u\) solves \eqref{eq:deltaeq} in the sense of distributions.
\end{prop}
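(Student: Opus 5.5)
Both directions reduce to Green's formula \eqref{eq:green-Lp} combined with the distributional identity $\gradt\bff_2=\delta_{x_0}$. For the forward direction, first $\bsigma\in L^p(\O)^2$: $A\nabla u\in L^p(\O)^2$ because $A\in L^\infty$ and $u\in W^{1,p}(\O)$, and $\bff_2\in L^p(\O)^2$ for $1<p<2$ by \eqref{eq:f2}. Next, the equation \eqref{eq:deltaeq} holds in $\mathcal D'(\O)$, meaning $(A\nabla u,\nabla\varphi)=(f_1,\varphi)+\varphi(x_0)$ for all $\varphi\in C_c^\infty(\O)$. By \eqref{eq:f2} we also have $-(\bff_2,\nabla\varphi)=\varphi(x_0)$. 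Subtracting gives $(\bsigma,\nabla\varphi)=-(f_1,\varphi)$, that is, $\gradt\bsigma=f_1$ in $\mathcal D'(\O)$. Since $f_1\in L^2(\O)\subset L^p(\O)$, this yields $\bsigma\in H^p(\divvr;\O)$, and the second line of \eqref{eq:mixed-continuous} follows at once, because $\gradt\bsigma$ and $f_1$ agree as $L^2$ functions.

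For the constitutive line, fix $\btau\in H^{p'}(\divvr;\O)$. Every pairing is finite: $(A^{-1}\bsigma,\btau)$ and $(A^{-1}\bff_2,\btau)$ are $L^p$--$L^{p'}$ pairings bounded as in \eqref{eq:f2-holder}, and $(\gradt\btau,u)$ pairs $L^{p'}$ with $W^{1,p}\subset L^p$. Since $A^{-1}\bsigma+A^{-1}\bff_2=-\nabla u$ pointwise a.e., the left side minus the right side equals
\[
-(\nabla u,\btau)-(\gradt\btau,u)+\langle\btau\cdot\bn,g\rangle_{\p\O}.
\]
Applying \eqref{eq:green-Lp} with $\varphi=u\in W^{1,p}(\O)$ and $u|_{\p\O}=g$ makes this vanish.

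For the converse, take $\btau\in C_c^\infty(\O)^2\subset H^{p'}(\divvr;\O)$, so the boundary term is zero. The first equation reads $(A^{-1}\bsigma+A^{-1}\bff_2,\btau)=(\gradt\btau,u)=-(\nabla u,\btau)$, using that $u\in W^{1,p}$ and its weak gradient is the distributional one. Hence $A^{-1}(\bsigma+\bff_2)=-\nabla u$ a.e., that is, $\bsigma=-A\nabla u-\bff_2$. The second equation with $v\in C_c^\infty(\O)\subset L^2(\O)$ gives $\gradt\bsigma=f_1$ in $L^2(\O)$, hence in $\mathcal D'(\O)$. Then
\[
-\gradt(A\nabla u)=\gradt\bsigma+\gradt\bff_2=f_1+\delta_{x_0}
\]
in $\mathcal D'(\O)$, and $u|_{\p\O}=g$ is assumed, so $u$ solves \eqref{eq:deltaeq} in the sense of distributions.

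The only delicate point is applying Green's formula at the stated regularity. This requires the normal trace of $\btau\in H^{p'}(\divvr;\O)$ to lie in $W^{-1/p',p'}(\p\O)$, in duality with $g\in W^{1/p',p}(\p\O)$, which is exactly the cited result of \cite[Sec.~1.2.6]{BernardiEtAl2024}. Everything else is bookkeeping of which pairings are $L^p$--$L^{p'}$. The exponents are arranged so that the singular field is never paired with a scalar test function: it is tested only against $\btau\in L^{p'}$ or against smooth $\varphi$.
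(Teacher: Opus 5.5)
Your proof is correct and follows the paper's argument essentially step for step. You obtain $\gradt\bsigma=f_1$ by cancelling the two Dirac measures in $\mathcal D'(\O)$, the constitutive identity from Green's formula \eqref{eq:green-Lp} with $\varphi=u$, and the converse by testing with $\btau\in\mathcal D(\O)^2$ to recover $\bsigma=-A\nabla u-\bff_2$ a.e.; the only difference is that you write the distributional cancellation out explicitly against test functions, where the paper does it in one line.
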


\begin{remark}[Where $f_1\in L^2(\O)$ is used]
\label{rem:f1-L2}
The standing $L^2$ hypothesis enters \eqref{eq:mixed-continuous} only
through the test space of the second identity.  If $f_1$ is merely in
$L^p(\O)$, the same identity holds for every $v\in L^{p'}(\O)$, and
Sections~\ref{sec:discrete}--\ref{sec:approx} and
Section~\ref{sec:aposteriori} go through unchanged; the $L^2$
regularity is needed again only in Section~\ref{subsec:scalar-refined}.
\end{remark}

\begin{proof}
Since \(A^{-1}\bsigma=-\nabla u-A^{-1}\bff_2\) with \(\nabla u\in L^p(\O)^2\)
and \(\bff_2\in L^p(\O)^2\), we have \(\bsigma\in L^p(\O)^2\).  Its
distributional divergence is
\(\gradt\bsigma=-\gradt(A\nabla u)-\gradt\bff_2
=(f_1+\delta_{x_0})-\delta_{x_0}=f_1\), the two measures canceling.  Since \(f_1\in L^p(\O)\), the divergence of
\(\bsigma\) is represented by an \(L^p\) function, and
\(\bsigma\in H^{p}(\divvr;\O)\) by \eqref{eq:Hr-div}.  Apart from the
converse statement, this is the only step in which a distributional
identity occurs: from here on both \(\bsigma\) and \(\gradt\bsigma\) are
functions.

Pairing the constitutive identity with \(\btau\in H^{p'}(\divvr;\O)\) and
applying \eqref{eq:green-Lp} with \(\varphi=u\) gives the first identity
in \eqref{eq:mixed-continuous}.  The second is \(\gradt\bsigma=f_1\)
tested against \(v\in L^2(\O)\); since \(p'>2\) and \(\O\) is bounded,
\(L^{p'}(\O)\subset L^2(\O)\), so it holds in particular for every
\(v\in L^{p'}(\O)\).

Conversely, testing the first identity with \(\btau\in\mathcal D(\O)^2\),
for which the boundary term vanishes, gives
\(A^{-1}\bsigma+\nabla u=-A^{-1}\bff_2\) almost everywhere in \(\O\), all
three fields lying in \(L^p(\O)^2\), while the second gives
\(\gradt\bsigma=f_1\).  Hence
\(-\gradt(A\nabla u)=\gradt(\bsigma+\bff_2)=f_1+\delta_{x_0}\) in
\(\mathcal D'(\O)\), and the boundary condition is contained in the
prescribed trace of \(u\).
\end{proof}

\section{The mixed finite element method and the flux error in
         \texorpdfstring{$L^p$}{Lp}}
\label{sec:discrete}

Let $\cT_h$ be a shape-regular triangulation of $\O$. For each
$K\in\cT_h$, let
\[
\begin{aligned}
P_0(K)&:=\{v:\ v\text{ is constant on }K\},\\
\RT_0(K)&:=P_0(K)^2+\bx P_0(K)
=\{\ba+b\bx:\ \ba\in\Rone^2,\ b\in\Rone\}.
\end{aligned}
\]
The global spaces of the lowest-order Raviart--Thomas pair
\cite{RaviartThomas1977} (see also \cite[Ch.~III]{BrezziFortin1991})
are
\begin{eqnarray*}
\RT_0(\cT_h)&:=&\{\btau_h\in H(\divvr;\O):\ \btau_h|_K\in\RT_0(K)
\ \forall K\in\cT_h\},\\
P_0(\cT_h)&:=&\{v_h\in L^2(\O):\ v_h|_K\in P_0(K)
\ \forall K\in\cT_h\}.
\end{eqnarray*}
We define $Q_hf\in P_0(\cT_h)$ by the elementwise mean
$(Q_hf)|_K:=\frac1{|K|}\int_K f\,dx$.%
This definition is meaningful for every $f\in L^1(\O)$, and in
particular under the weaker hypothesis $f_1\in L^p(\O)$ to which the
flux analysis of this section and of Section~\ref{sec:approx} is
confined; under the standing assumption $f_1\in L^2(\O)$ it coincides,
in the present lowest-order setting, with the usual $L^2$-orthogonal
projection onto $P_0(\cT_h)$.

We denote by $\Pi_h^{\rm rt}$ the canonical Raviart--Thomas interpolation
operator. Whenever the required normal moments are well defined,
$\Pi_h^{\rm rt}\btau\in\RT_0(\cT_h)$ is determined by
$\int_e\Pi_h^{\rm rt}\btau\cdot\bn_e\,ds=\int_e\btau\cdot\bn_e\,ds$ for
every edge $e$, and satisfies $\gradt\Pi_h^{\rm rt}\btau=Q_h(\gradt\btau)$.

\subsection{The mixed discrete problem}
\label{subsec:discrete-problem}

The finite element approximation is: find
$(\bsigma_h,u_h)\in\RT_0(\cT_h)\times P_0(\cT_h)$ such that
\begin{equation}\label{eq:mixed-discrete}
  \begin{aligned}
    (A^{-1}\bsigma_h,\btau_h)-(\gradt\btau_h,u_h)
      &=-(A^{-1}\bff_2,\btau_h)-\langle\btau_h\cdot\bn,g\rangle_{\p\O}
      &&\forall\,\btau_h\in\RT_0(\cT_h),\\
    (\gradt\bsigma_h,v_h)&=(f_1,v_h)
      &&\forall\,v_h\in P_0(\cT_h) .
  \end{aligned}
\end{equation}
The singular volume term $(A^{-1}\bff_2,\btau_h)$ is well defined because $\bff_2\in L^p(\O)^2\subset L^1(\O)^2$ and every $\btau_h\in\RT_0(\cT_h)$ is bounded elementwise. When $A$ is elementwise constant, this integral can be evaluated exactly; for a general coefficient it can be computed by a singular quadrature based on the Duffy transformation \cite{Duffy1982}.
More importantly, \eqref{eq:mixed-discrete} is
algebraically the standard $\RT_0$--$P_0$ mixed method: the exponent
$p$ enters neither the matrix nor its solvability, but only the
regularity of the exact flux and the norms used to measure the error.

\begin{prop}[Standard discrete well-posedness]
\label{prop:discrete-wellposed}
For every triangulation $\cT_h$, problem \eqref{eq:mixed-discrete} has
a unique solution, and
\begin{equation}\label{eq:discrete-conservation}
  \gradt\bsigma_h=Q_hf_1
\end{equation}
elementwise.
\end{prop}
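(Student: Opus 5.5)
The plan is to treat \eqref{eq:mixed-discrete} as a finite-dimensional square linear system and prove uniqueness. Uniqueness for a square system gives existence. The only new feature is the load, and the first step is to check it defines a linear functional on $\RT_0(\cT_h)$.
\begin{itemize}
\item The term $(A^{-1}\bff_2,\btau_h)$ is finite because $\bff_2\in L^1(\O)^2$ and $\btau_h\in L^\infty(\O)^2$.
\item The boundary term $\langle\btau_h\cdot\bn,g\rangle_{\p\O}$ is finite because $\btau_h\cdot\bn$ is piecewise constant on $\p\O$ and $g\in W^{1-1/p,p}(\p\O)\subset L^1(\p\O)$.
\item The term $(f_1,v_h)$ is finite because $f_1\in L^2(\O)$.
\end{itemize}
The matrix is therefore exactly the classical $\RT_0$--$P_0$ matrix, and the exponent $p$ plays no role in what follows.

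For uniqueness, set all data to zero and let $(\bsigma_h,u_h)$ solve the homogeneous problem. The second equation gives $(\gradt\bsigma_h,v_h)=0$ for all $v_h\in P_0(\cT_h)$. Since $\gradt\RT_0(\cT_h)\subset P_0(\cT_h)$, the choice $v_h=\gradt\bsigma_h$ yields $\gradt\bsigma_h=0$. Taking $\btau_h=\bsigma_h$ in the first equation then gives $(A^{-1}\bsigma_h,\bsigma_h)=0$, and by \eqref{eq:ellipticity} ($A^{-1}$ is bounded below by $\beta^{-1}$) this forces $\bsigma_h=0$. What remains is $(\gradt\btau_h,u_h)=0$ for all $\btau_h\in\RT_0(\cT_h)$.

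To conclude $u_h=0$ I will use surjectivity of $\gradt:\RT_0(\cT_h)\to P_0(\cT_h)$. The Dirichlet condition is natural here, so no normal-trace constraint is imposed on $\RT_0(\cT_h)$. Given $u_h\in P_0(\cT_h)$, I solve $-\Delta z=u_h$ in $\O$ with $z=0$ on $\p\O$ and set $\btau=-\nabla z$. This field lies in $H^1(\O)^2$ only under convex or smooth boundary assumptions. To avoid these on a general Lipschitz domain, I instead extend $u_h$ by zero to a ball $B\supset\O$, solve on $B$, and restrict. The resulting $\btau$ is then in $H^1(\O)^2$, so its edge normal moments are defined and $\Pi_h^{\rm rt}\btau$ exists. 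The commuting property gives $\gradt\Pi_h^{\rm rt}\btau=Q_hu_h=u_h$. Testing with $\btau_h=\Pi_h^{\rm rt}\btau$ gives $\|u_h\|_{L^2(\O)}^2=0$.

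Alternatively, surjectivity follows from a dimension and Euler-formula count: the pair is the standard Brezzi-stable one \cite[Ch.~III]{BrezziFortin1991}, and I may simply cite it. I expect this surjectivity to be the only nontrivial step. I will cite it rather than re-prove it, since the paper explicitly treats the discrete problem as the classical one.

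Finally, \eqref{eq:discrete-conservation} follows from the second equation. Both $\gradt\bsigma_h$ and $Q_hf_1$ lie in $P_0(\cT_h)$, and $(f_1,v_h)=(Q_hf_1,v_h)$ for every $v_h\in P_0(\cT_h)$ by the definition of the elementwise mean. Hence $(\gradt\bsigma_h-Q_hf_1,v_h)=0$ for all $v_h$. Choosing $v_h=\gradt\bsigma_h-Q_hf_1$ gives the identity elementwise.
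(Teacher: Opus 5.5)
Your proposal is correct and is the standard argument the paper invokes without writing it out: the system is square, the homogeneous problem is eliminated by coercivity of $(A^{-1}\cdot,\cdot)$ on the discrete kernel together with surjectivity of $\gradt:\RT_0(\cT_h)\to P_0(\cT_h)$, and \eqref{eq:discrete-conservation} follows by testing the second equation against $P_0(\cT_h)$. As you note, that last step needs only the inclusion $\gradt\RT_0(\cT_h)\subset P_0(\cT_h)$, and your extension-to-a-ball lifting is a valid self-contained proof of surjectivity on a fixed mesh; the right inverse of the divergence used in Lemma~\ref{lem:discrete-Lp-lifting} is needed only later, where an $h$-uniform $L^{p'}$ bound is required.
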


The proof is standard and the second equation is
\eqref{eq:discrete-conservation} because $\gradt(\RT_0(\cT_h))=P_0(\cT_h)$. %

Subtracting \eqref{eq:mixed-discrete} from
\eqref{eq:mixed-continuous} gives the error equations
\begin{equation}\label{eq:mixed-error-equations}
  \begin{aligned}
    (A^{-1}(\bsigma-\bsigma_h),\btau_h)-(\gradt\btau_h,u-u_h)&=0
      &&\forall\,\btau_h\in\RT_0(\cT_h),\\
    (\gradt(\bsigma-\bsigma_h),v_h)&=0
      &&\forall\, v_h\in P_0(\cT_h) .
  \end{aligned}
\end{equation}
The data have canceled: both right-hand sides of
\eqref{eq:mixed-discrete} are exactly the restrictions to
$\RT_0(\cT_h)\times P_0(\cT_h)$ of those in \eqref{eq:mixed-continuous}.  In
particular $\gradt(\bsigma-\bsigma_h)=f_1-Q_hf_1$.
Define
\[
\begin{aligned}
  K_h&:=\{\btau_h\in\RT_0(\cT_h):\gradt\btau_h=0\},\\
  \RT_0^{f_1}(\cT_h)&:=\{\btau_h\in\RT_0(\cT_h):\gradt\btau_h=Q_hf_1\} .
\end{aligned}
\]
The affine set $\RT_0^{f_1}(\cT_h)$ is nonempty because
$\gradt(\RT_0(\cT_h))=P_0(\cT_h)$, and
$\bsigma_h\in\RT_0^{f_1}(\cT_h)$ by
\eqref{eq:discrete-conservation}.  On $K_h$ every graph norm reduces to
a Lebesgue norm, since the divergence vanishes identically there; this
is used repeatedly below.  Restricting the first identity in
\eqref{eq:mixed-error-equations} to $\btau_h\in K_h$ removes the second
term and gives the kernel orthogonality
\begin{equation}\label{eq:kernel-orthogonality}
  (A^{-1}(\bsigma-\bsigma_h),\btau_h)=0\qquad\forall\,\btau_h\in K_h .
\end{equation}

\subsection{The solenoidal projection and the flux error}
\label{subsec:duran}

For $\brho\in L^p(\O)^2$ define $R_h\brho\in K_h$ by
\begin{equation}\label{eq:Rh-def}
  (A^{-1}R_h\brho,\btau_h)=(A^{-1}\brho,\btau_h)
  \qquad\forall\,\btau_h\in K_h .
\end{equation}
For each fixed mesh $R_h$ is well defined, since
$(A^{-1}\cdot,\cdot)$ is positive definite on the finite-dimensional
space $K_h$.  The analytic input of the whole $L^p$-error analysis is an
$h$-uniform bound on $R_h$, which we state in two forms.  The second
measures the divergence of the argument in a negative norm, so we recall
that, for \(1<q<\infty\) and \(q'=q/(q-1)\), \(W^{-1,q}(\O)\) is the
dual of \(W^{1,q'}_0(\O)\), with
\beq \label{def:W-1norm-q}
  \|F\|_{W^{-1,q}(\O)}
  :=\sup_{0\neq\chi\in W^{1,q'}_0(\O)}
   \frac{\langle F,\chi\rangle}{\|\nabla\chi\|_{L^{q'}(\O)}} ,
\eeq
the gradient norm being equivalent to the full \(W^{1,q'}\) norm on
\(W^{1,q'}_0(\O)\) by Poincar\'e's inequality.  For
\(\bv\in L^q(\O)^2\) the divergence \(\gradt\bv\) is defined as a
distribution by \(\langle\gradt\bv,\chi\rangle
=-\int_\O\bv\cdot\nabla\chi\,dx\) for \(\chi\in C^\infty_c(\O)\),
whence, by H\"older's inequality,
$ |\langle\gradt\bv,\chi\rangle|
  \le\|\bv\|_{L^q(\O)}\,\|\nabla\chi\|_{L^{q'}(\O)}$.
Since \(C^\infty_c(\O)\) is dense in \(W^{1,q'}_0(\O)\), the functional
\(\gradt\bv\) extends to that space with the same bound, so that
\begin{equation}\label{eq:div-Wm1q}
  \|\gradt\bv\|_{W^{-1,q}(\O)}\le\|\bv\|_{L^q(\O)}
  \qquad\forall\,\bv\in L^q(\O)^2 .
\end{equation}
This requires nothing of \(\bv\) beyond \(\bv\in L^q(\O)^2\); in
particular \(\gradt\bv\) need not be a function.  Both are used here at
\(q=p\) and again in Section~\ref{sec:aposteriori} at a general
exponent.

\begin{quote}
\textbf{Hypothesis (SP) (solenoidal projection).}
\emph{There is $C_{\mathrm{SP}}>0$, independent of $h$, such that}
\begin{equation}\label{eq:Duran-stability}
  \|R_h\brho\|_{L^p(\O)}\le C_{\mathrm{SP}}\|\brho\|_{L^p(\O)}
  \qquad\forall\,\brho\in L^p(\O)^2 .
\end{equation}

\medskip
\textbf{Hypothesis (SP$'$) (two-term form).}
\emph{There are $C_0>0$ and $C_{\mathrm{SP}}'\ge0$, independent of $h$,
such that}
\begin{equation}\label{eq:Duran-two-term}
  \|R_h\brho\|_{L^p(\O)}
  \le C_0\|\brho\|_{L^p(\O)}
   +C_{\mathrm{SP}}'\|\gradt\brho\|_{W^{-1,p}(\O)}
  \qquad\forall\,\brho\in L^p(\O)^2 .
\end{equation}
\end{quote}

By \eqref{eq:div-Wm1q} at $q=p$, (SP$'$) implies (SP) with
$C_{\mathrm{SP}}=C_0+C_{\mathrm{SP}}'$, and (SP) is the case
$C_{\mathrm{SP}}'=0$ of (SP$'$); as existence statements the two are
therefore the same, and what distinguishes them is the size of the
constants.  It is the two-term form that Dur\'an proves, with a constant
$C_0$ that does not degenerate as $p\downarrow1$ while
$C_{\mathrm{SP}}'$ does, and the distinction survives in the error
bounds: Theorem~\ref{thm:flux-quasibest} carries $C_{\mathrm{SP}}$ in
front of the best-approximation term and
Theorem~\ref{thm:flux-two-term} carries only $C_0$, the remainder being
moved into a data-oscillation term of one higher order in $h$.  Both
hypotheses are discussed in Remark~\ref{rem:Duran-scope} below.

\begin{thm}[Equilibrated $L^p$ quasi-best approximation]
\label{thm:flux-quasibest}
Assume that Hypothesis~(SP) holds.  Then
\begin{equation}\label{eq:flux-quasibest}
  \|\bsigma-\bsigma_h\|_{L^p(\O)}
  \le(1+C_{\mathrm{SP}})
  \inf_{\btau_h\in\RT_0^{f_1}(\cT_h)}\|\bsigma-\btau_h\|_{L^p(\O)} .
\end{equation}
\end{thm}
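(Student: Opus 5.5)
Fix an arbitrary $\btau_h\in\RT_0^{f_1}(\cT_h)$; the bound \eqref{eq:flux-quasibest} follows by taking the infimum at the end. Both $\btau_h$ and $\bsigma_h$ lie in $\RT_0^{f_1}(\cT_h)$ by \eqref{eq:discrete-conservation}. Hence their difference $\btau_h-\bsigma_h$ has vanishing divergence and lies in the discrete kernel $K_h$. The plan is to identify this difference as the image under the solenoidal projection $R_h$ of \eqref{eq:Rh-def} of an $L^p$ field controlled by $\bsigma-\btau_h$, and then invoke Hypothesis~(SP).

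The key identity is
\[
  \btau_h-\bsigma_h=R_h(\btau_h-\bsigma) .
\]
To verify it, test with an arbitrary $\bw_h\in K_h$. By the kernel orthogonality \eqref{eq:kernel-orthogonality},
\[
  (A^{-1}(\btau_h-\bsigma_h),\bw_h)
  =(A^{-1}(\btau_h-\bsigma),\bw_h)+(A^{-1}(\bsigma-\bsigma_h),\bw_h)
  =(A^{-1}(\btau_h-\bsigma),\bw_h).
\]
All pairings are finite: $\bsigma\in L^p(\O)^2$, and $\bw_h$ is elementwise polynomial, hence in $L^{p'}(\O)^2$. The field $\btau_h-\bsigma$ belongs to $L^p(\O)^2$, so $R_h(\btau_h-\bsigma)$ is defined. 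Since $\btau_h-\bsigma_h\in K_h$ satisfies the defining relation \eqref{eq:Rh-def}, and $(A^{-1}\cdot,\cdot)$ is positive definite on the finite-dimensional space $K_h$, uniqueness gives the identity.

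Hypothesis~(SP) then yields
\[
  \|\btau_h-\bsigma_h\|_{L^p(\O)}\le C_{\mathrm{SP}}\|\bsigma-\btau_h\|_{L^p(\O)} .
\]
The triangle inequality gives
\[
  \|\bsigma-\bsigma_h\|_{L^p(\O)}\le\|\bsigma-\btau_h\|_{L^p(\O)}+\|\btau_h-\bsigma_h\|_{L^p(\O)}\le(1+C_{\mathrm{SP}})\|\bsigma-\btau_h\|_{L^p(\O)} .
\]
Taking the infimum over $\btau_h\in\RT_0^{f_1}(\cT_h)$ gives \eqref{eq:flux-quasibest}.

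The only delicate point is the identification step. It needs the kernel orthogonality \eqref{eq:kernel-orthogonality} to hold, which requires that the error equations \eqref{eq:mixed-error-equations} are meaningful with $\bsigma\notin L^2$; this holds because discrete test functions are bounded. It also needs the restriction to the equilibrated set $\RT_0^{f_1}(\cT_h)$, so that the difference lies in $K_h$. Both have already been established in the text, so the argument does not use any data-oscillation term.
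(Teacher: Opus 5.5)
Your proposal is correct and is essentially the paper's proof. The paper writes the same identification as $\bsigma_h-\bsigma_I=R_h(\bsigma-\bsigma_I)$, i.e.\ $\bsigma-\bsigma_h=(I-R_h)(\bsigma-\bsigma_I)$, which is your identity up to sign, and then applies (SP) and takes the infimum over $\RT_0^{f_1}(\cT_h)$.
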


\begin{proof}
Let $\bsigma_I\in\RT_0^{f_1}(\cT_h)$ be arbitrary.  Then
$\bsigma_h-\bsigma_I\in K_h$, and by
\eqref{eq:kernel-orthogonality},
$(A^{-1}(\bsigma_h-\bsigma_I),\btau_h)=(A^{-1}(\bsigma-\bsigma_I),\btau_h)$
for every $\btau_h\in K_h$.  By the definition \eqref{eq:Rh-def} of $R_h$ this says exactly
\begin{equation}\label{eq:error-representation}
  \bsigma_h-\bsigma_I=R_h(\bsigma-\bsigma_I),
  \qquad\text{hence}\qquad
  \bsigma-\bsigma_h=(I-R_h)(\bsigma-\bsigma_I).
\end{equation}
Therefore
$\|\bsigma-\bsigma_h\|_{L^p}\le(1+C_{\mathrm{SP}})\|\bsigma-\bsigma_I\|_{L^p}$ by
\eqref{eq:Duran-stability}, and taking the infimum over
$\bsigma_I\in\RT_0^{f_1}(\cT_h)$ proves \eqref{eq:flux-quasibest}.
\end{proof}

Under (SP$'$) the same representation gives a two-term bound.  The point
is that the projection is applied in \eqref{eq:error-representation} to
a field whose divergence is $f_1-Q_hf_1$, which has vanishing mean on
every element, so that its negative norm is one power of $h$ smaller
than its $L^p$ norm.  The following lemma is standard; we state it here
for completeness.

\begin{lem}[Elementwise oscillation in the negative norm]
\label{lem:osc-negative}
Let $1<q<\infty$ and let $g\in L^q(\O)$ satisfy $\int_Kg\,dx=0$ for
every $K\in\cT_h$.  Then
\begin{equation}\label{eq:osc-negative}
  \|g\|_{W^{-1,q}(\O)}
  \le C_{\rm P}\Bigl(\sum_{K\in\cT_h}h_K^{q}\|g\|_{L^q(K)}^{q}\Bigr)^{1/q}
  \le C_{\rm P}\,h\,\|g\|_{L^q(\O)} ,
\end{equation}
where $h:=\max_{K\in\cT_h}h_K$ and the Poincar\'e constant $C_{\rm P}$
depends only on $q$ and on the shape regularity of $\cT_h$.
\end{lem}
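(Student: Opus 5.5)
The plan is to use duality, elementwise mean subtraction, and the Poincar\'e inequality on each element. Fix $\chi\in W^{1,q'}_0(\O)$ with $\chi\ne0$. Since $g\in L^q(\O)$ is a function, the pairing is the integral $\langle g,\chi\rangle=\int_\O g\chi\,dx=\sum_{K}\int_K g\chi\,dx$. On each $K$ the mean of $g$ vanishes, so the constant $\bar\chi_K:=|K|^{-1}\int_K\chi\,dx$ may be subtracted for free:
\[
  \int_K g\chi\,dx=\int_K g\,(\chi-\bar\chi_K)\,dx .
\]

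Next apply H\"older's inequality on $K$, followed by the Poincar\'e inequality for mean-zero functions on a convex domain:
\[
  \|\chi-\bar\chi_K\|_{L^{q'}(K)}\le C_{\rm P}\,h_K\|\nabla\chi\|_{L^{q'}(K)} .
\]
Since triangles are convex, the constant depends only on $q'$, and by the Payne--Weinberger/Acosta--Dur\'an type results one can even take it independent of the shape. Alternatively, pull back to the reference triangle and use shape regularity; the $h_K$ scaling comes from the affine change of variables, with the Jacobian factors $|K|^{1/q'}$ cancelling on both sides. This scaling argument is the only point needing care, and it is the reason the constant depends on $q$ and on the shape regularity.

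Summing over $K$ and applying the discrete H\"older inequality for sequences with exponents $q,q'$ gives
\[
  |\langle g,\chi\rangle|\le C_{\rm P}\sum_K h_K\|g\|_{L^q(K)}\|\nabla\chi\|_{L^{q'}(K)}
  \le C_{\rm P}\Bigl(\sum_K h_K^q\|g\|_{L^q(K)}^q\Bigr)^{1/q}\|\nabla\chi\|_{L^{q'}(\O)},
\]
because $\sum_K\|\nabla\chi\|^{q'}_{L^{q'}(K)}=\|\nabla\chi\|^{q'}_{L^{q'}(\O)}$. Dividing by $\|\nabla\chi\|_{L^{q'}(\O)}$ and taking the supremum in \eqref{def:W-1norm-q} gives the first inequality of \eqref{eq:osc-negative}.

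The second inequality follows directly from $h_K\le h$ together with $\sum_K\|g\|^q_{L^q(K)}=\|g\|^q_{L^q(\O)}$. There is no real obstacle; the only step requiring attention is the $h$-uniform local Poincar\'e constant in $L^{q'}$, and that is standard.
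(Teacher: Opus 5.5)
Your proposal is correct and follows the paper's proof essentially step for step. Both arguments use $\int_K g\,dx=0$ to replace $\chi$ by $\chi-\bar\chi_K$ on each element, apply H\"older's inequality and the local $L^{q'}$ Poincar\'e inequality with factor $h_K$, then the discrete H\"older inequality with exponents $q,q'$, and finish with $h_K\le h$.
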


\begin{proof}
Let $\chi\in W^{1,q'}_0(\O)$.  Since $g$ has vanishing mean on each
element, $\langle g,\chi\rangle=\int_\O g\,(\chi-Q_h\chi)\,dx$.  By the
elementwise H\"older and Poincar\'e inequalities
$\|\chi-Q_h\chi\|_{L^{q'}(K)}\le C_{\rm P}h_K\|\nabla\chi\|_{L^{q'}(K)}$,
followed by H\"older's inequality for sums,
\[
  \begin{aligned}
  |\langle g,\chi\rangle|
  &\le C_{\rm P}\sum_{K\in\cT_h}h_K\|g\|_{L^q(K)}\|\nabla\chi\|_{L^{q'}(K)}\\
  &\le C_{\rm P}\Bigl(\sum_{K\in\cT_h}h_K^{q}\|g\|_{L^q(K)}^{q}\Bigr)^{1/q}
   \|\nabla\chi\|_{L^{q'}(\O)} .
  \end{aligned}
\]
Dividing by $\|\nabla\chi\|_{L^{q'}(\O)}$ and taking the supremum gives
the first bound in \eqref{eq:osc-negative}; the second follows from
$h_K\le h$.
\end{proof}

\begin{thm}[Two-term $L^p$ flux estimate]
\label{thm:flux-two-term}
Assume Hypothesis~(SP$'$), \eqref{eq:Duran-two-term}, and
$f_1\in L^p(\O)$.  Then
\begin{equation}\label{eq:flux-two-term}
  \|\bsigma-\bsigma_h\|_{L^p(\O)}
  \ \le\ (1+C_0)\!\!\inf_{\btau_h\in\RT_0^{f_1}(\cT_h)}\!\!
     \|\bsigma-\btau_h\|_{L^p(\O)}
   +C_{\rm P}\,C_{\mathrm{SP}}'\,h\,\|f_1-Q_hf_1\|_{L^p(\O)} ,
\end{equation}
with the Poincar\'e constant $C_{\rm P}$ of Lemma~\ref{lem:osc-negative}.
\end{thm}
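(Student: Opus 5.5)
The plan is to reuse the error representation \eqref{eq:error-representation} from the proof of Theorem~\ref{thm:flux-quasibest}, which needs only the kernel orthogonality \eqref{eq:kernel-orthogonality} and the definition \eqref{eq:Rh-def} of $R_h$, not Hypothesis~(SP).  Fix an arbitrary $\bsigma_I\in\RT_0^{f_1}(\cT_h)$.  Then $\bsigma_h-\bsigma_I\in K_h$, so $\bsigma-\bsigma_h=(I-R_h)(\bsigma-\bsigma_I)$.  The triangle inequality gives
\[
  \|\bsigma-\bsigma_h\|_{L^p(\O)}\le\|\bsigma-\bsigma_I\|_{L^p(\O)}+\|R_h(\bsigma-\bsigma_I)\|_{L^p(\O)} .
\]

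Next apply (SP$'$), \eqref{eq:Duran-two-term}, with $\brho:=\bsigma-\bsigma_I\in L^p(\O)^2$.  This bounds the second term by
\[
  C_0\|\bsigma-\bsigma_I\|_{L^p(\O)}+C_{\mathrm{SP}}'\|\gradt(\bsigma-\bsigma_I)\|_{W^{-1,p}(\O)} .
\]
The key observation is that the divergence of $\brho$ is a function independent of the choice of $\bsigma_I$.  By Proposition~\ref{prop:mixed-identities}, $\gradt\bsigma=f_1$, and by the definition of $\RT_0^{f_1}(\cT_h)$, $\gradt\bsigma_I=Q_hf_1$.  Hence $\gradt\brho=f_1-Q_hf_1\in L^p(\O)$.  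This function has vanishing mean on each $K\in\cT_h$, since $Q_h$ is the elementwise mean; that is meaningful because $f_1\in L^p(\O)\subset L^1(\O)$.

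Lemma~\ref{lem:osc-negative} with $q=p$ and $g=f_1-Q_hf_1$ then gives
\[
  \|\gradt\brho\|_{W^{-1,p}(\O)}\le C_{\rm P}\,h\,\|f_1-Q_hf_1\|_{L^p(\O)} .
\]
One small point needs care: the distributional divergence of $\brho\in L^p(\O)^2$, as used in \eqref{eq:Duran-two-term}, must coincide as a functional on $W_0^{1,p'}(\O)$ with the $L^p$ function $f_1-Q_hf_1$.  This holds by density of $C_c^\infty(\O)$ in $W_0^{1,p'}(\O)$, since both functionals are continuous there.

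Combining the three displays gives
\[
  \|\bsigma-\bsigma_h\|_{L^p(\O)}\le(1+C_0)\|\bsigma-\bsigma_I\|_{L^p(\O)}+C_{\rm P}C_{\mathrm{SP}}'\,h\,\|f_1-Q_hf_1\|_{L^p(\O)} .
\]
The oscillation term does not depend on $\bsigma_I$, so taking the infimum over $\bsigma_I\in\RT_0^{f_1}(\cT_h)$ yields \eqref{eq:flux-two-term}.

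I do not expect a genuine obstacle.  The only substantive point is recognizing that every competitor in the equilibrated set has the same divergence defect $f_1-Q_hf_1$, so the negative-norm term is a fixed oscillation quantity that Lemma~\ref{lem:osc-negative} reduces by one power of $h$.
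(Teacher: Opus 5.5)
Your proof is correct and follows the paper's argument exactly: the error representation $\bsigma-\bsigma_h=(I-R_h)(\bsigma-\bsigma_I)$, then (SP$'$) applied to $\bsigma-\bsigma_I$, then Lemma~\ref{lem:osc-negative} for the common divergence defect $f_1-Q_hf_1$, and finally the infimum over the equilibrated set. Your extra remark, that by density the distributional divergence coincides with the $L^p$ function $f_1-Q_hf_1$ as a functional on $W_0^{1,p'}(\O)$, is a harmless detail the paper leaves implicit.
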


\begin{proof}
Let $\bsigma_I\in\RT_0^{f_1}(\cT_h)$.  By \eqref{eq:error-representation}
and \eqref{eq:Duran-two-term},
\[
  \begin{aligned}
  \|\bsigma-\bsigma_h\|_{L^p(\O)}
  &\le\|\bsigma-\bsigma_I\|_{L^p(\O)}+\|R_h(\bsigma-\bsigma_I)\|_{L^p(\O)}\\
  &\le(1+C_0)\|\bsigma-\bsigma_I\|_{L^p(\O)}
   +C_{\mathrm{SP}}'\|\gradt(\bsigma-\bsigma_I)\|_{W^{-1,p}(\O)} .
  \end{aligned}
\]
Here $\gradt\bsigma=f_1$ and $\gradt\bsigma_I=Q_hf_1$, so
$\gradt(\bsigma-\bsigma_I)=f_1-Q_hf_1$ has vanishing mean on every
element, and Lemma~\ref{lem:osc-negative} with $q=p$ bounds its negative
norm by $C_{\rm P}h\|f_1-Q_hf_1\|_{L^p(\O)}$.  This last term does not depend on
$\bsigma_I$, so the infimum may be taken in the first one, which gives
\eqref{eq:flux-two-term}.
\end{proof}

\begin{remark}[Scope of (SP) and (SP$'$)]
\label{rem:Duran-scope}
Estimate \eqref{eq:Duran-two-term} is, as far as we are aware, the only
$L^p$-stability result available for the solenoidal projection of a
mixed method.  We set out its scope in detail, and isolate it as a
hypothesis so that this is visible and not buried in the proofs.

For \(1<p<2\), \eqref{eq:Duran-two-term} is known in two dimensions
under the assumptions of Dur\'an \cite[Thm.~3.1, Cor.~3.1 and
Rem.~3.5]{Duran1988}: a smooth simply connected domain, quasi-uniform
triangulations, and a Lipschitz scalar coefficient bounded away from
zero.  It is stated there in Theorem~3.1 for \(2\le p<\infty\) and
extended to \(1<p\le2\) in Remark~3.5, the constant
\(C_{\mathrm{SP}}'\) degenerating as \(p\downarrow1\) while \(C_0\) does
not; the passage to \eqref{eq:Duran-stability} through
\eqref{eq:div-Wm1q} is Corollary~3.1 there.

Two features of that setting are not incidental.  The argument reduces
\eqref{eq:Duran-stability} to the \(W^{1,p}\)-stability of a scalar
Ritz projection by means of a stream function, and this reduction is
available only in two dimensions; \cite{Duran1988} says so explicitly,
and \cite[p.~105]{GastaldiNochetto1989} says that the technique does
not extend beyond two dimensions.  Moreover, for the Dirichlet problem
\eqref{eq:deltaeq} the scalar problem so produced carries
\emph{Neumann} boundary conditions, and the required
\(W^{1,p}\)-stability is obtained in \cite{Duran1988} by asserting that
the proofs of Nitsche \cite{Nitsche1975} and of Rannacher--Scott
\cite{RannacherScott1982}, which are for the Dirichlet problem, carry
over with minor modifications, an assertion whose safety rests on
the smoothness of \(\O\) and on the Lipschitz continuity of the
coefficient.  These analytic assumptions already fail in the setting of
interest here.
We are not aware of any result giving
\eqref{eq:Duran-stability} on a polygon, on general adaptive meshes, or
for the discontinuous and anisotropic coefficients considered here.
Outside Dur\'an's setting we keep it as a hypothesis.
\end{remark}

\begin{remark}[Relation to the Hilbert case]
\label{rem:Duran-role}
Hypothesis~(SP) is used only for the \(L^p\)-error analysis and is not needed for the solvability of the discrete mixed problem. When \(p=2\), \(R_h\) is the \(A^{-1}\)-orthogonal projection onto \(K_h\), so (SP) is automatic and the flux estimate reduces to the classical best-approximation identity
\[
\|A^{-1/2}(\bsigma-\bsigma_h)\|_{0,\O}
=\inf_{\btau_h\in\RT_0^{f_1}(\cT_h)}
\|A^{-1/2}(\bsigma-\btau_h)\|_{0,\O}.
\]
Nothing is needed for this beyond \eqref{eq:kernel-orthogonality},
which at \(p=2\) is an orthogonality in the \(A^{-1}\) inner product,
together with \(\bsigma_h\in\RT_0^{f_1}(\cT_h)\), which is what makes the
infimum attained.  The corresponding inequality, with the infimum over
the \emph{equilibrated} set and with a constant independent of the
coefficient, is Theorem~2 of \cite{Zhang:20mixed}, and Remark~5 there
for a matrix coefficient as here.
Theorem~\ref{thm:flux-quasibest} is its counterpart in \(L^p\), with the
equality weakened to a bound with constant \(1+C_{\mathrm{SP}}\):
Hypothesis~(SP) is precisely what replaces the orthogonality that makes
the case \(p=2\) an identity.

\end{remark}

\begin{remark}[The two estimates compared]
\label{rem:Duran-forms}
Theorem~\ref{thm:flux-quasibest} is a quasi-best approximation bound in
the strict sense: one term, and no reference to the data.
Theorem~\ref{thm:flux-two-term} is not, but it separates the two
constants, and the separation is the useful one.  All the
\(p\)-dependence that degenerates as \(p\downarrow1\) is carried by
\(C_{\mathrm{SP}}'\), which multiplies only the oscillation; the
constant in front of the approximation term is \(1+C_0\), which stays
bounded.  The oscillation is also of one higher order in \(h\): on quasi-uniform
meshes it is \(O(h^2)\) if \(f_1|_K\in W^{1,p}(K)\) on every element,
whereas Corollary~\ref{cor:global-rate} bounds the flux error by
\(Ch^{2/p-1}\) with \(2/p-1<1\), a rate that is sharp under the
nonvanishing leading-order hypothesis of
Corollary~\ref{cor:global-lower}.  It vanishes
altogether when
\(f_1\) is elementwise constant, in particular for the pure Dirac
problem \(f_1\equiv0\).  We use \eqref{eq:flux-quasibest} in what
follows, since the rates of Section~\ref{sec:approx} are unaffected by
which of the two is taken; \eqref{eq:flux-two-term} is the sharper
statement when \(p\) is close to \(1\).

\end{remark}

%
%
%
%
%
%
%
%
%
%

\section{Approximation of the singular flux}
\label{sec:approx}

Theorem~\ref{thm:flux-quasibest} reduces the flux analysis to the
constrained approximation problem
$\inf_{\btau_h\in\RT_0^{f_1}(\cT_h)}\|\bsigma-\btau_h\|_{L^p(\O)}$,
which is the subject of this section.

Two things stand in the way.
The first is that on the element carrying the pole the error is
$h_{K^{x_0}}^{2/p-1}$, that element's diameter to the power $2/p-1$, for
every field of $\RT_0(\cT_h)$, provided the
leading $|x-x_0|^{-1}$ part of $\bsigma$ does not vanish there.  This is a property of
the singularity and of the local Raviart--Thomas space, not of
the method, it holds whatever the coefficient, and no construction
removes it; it is stated for a pole element that is, after translation
and scaling, of a fixed shape (Section~\ref{subsec:sharpness}).  It
applies to the canonical interpolant like anything else, and the radial
interface flux of Section~\ref{subsec:regularity}, with $x_0$ a vertex
of the fitted mesh, gives a coefficient for which $\Pi_h^{\rm rt}\bsigma$
is perfectly well defined and vanishes on the pole element, so that it
misses the singular field entirely.
The second is that $\Pi_h^{\rm rt}\bsigma$ may not be defined at all: in
the sector class of Section~\ref{subsec:sector} a nonzero angular
component of the leading $r^{-1}$ field makes the normal moment on an
edge through the pole non-integrable.  The first is present in all three
problems of Section~\ref{sec:numerics}, the second only in~(S).

What is needed near the pole is therefore not a more accurate
comparison field but a defined and equilibrated one.
Section~\ref{subsec:patch} constructs it by local flux balance, and its
error on the patch has the scale $h_{x_0}^{2/p-1}$, which
Section~\ref{subsec:sharpness} shows to be unavoidable when the leading
homogeneous $r^{-1}$ term does not vanish; in that regime nothing is
given up in exchange.  Sections~\ref{subsec:global-rate}
and~\ref{subsec:equidistribution} derive the global rate from it, on
quasi-uniform and on graded families.

We work throughout with a partition of $\O$ adapted to the coefficient
and with a local hypothesis on $\bsigma$ near the pole.
Let $\cP=\{\O_j\}_{j=1}^{J}$ be a finite partition of $\O$ into
pairwise disjoint Lipschitz subdomains, subordinate to the coefficient
interfaces, so that
$\overline{\O}=\bigcup_{j=1}^{J}\overline{\O_j}$.  The point $x_0$ may
lie in the interior of one subdomain, on an interface, or at an
interface junction.  Write $\nabla_{\cP}\bv:=\nabla(\bv|_{\O_j})$ in
$\O_j$ for the gradient taken subdomain by subdomain, which is defined
for fields that jump across the interfaces.
All triangulations are assumed to be fitted to $\cP$: every $K\in\cT_h$
is contained in the closure of a single subdomain $\O_j$.  Piecewise
regularity with respect to $\cP$ is then elementwise regularity, which is
what the Raviart--Thomas approximation estimate uses.

\begin{assumption}[Local structure of the singular flux]
\label{ass:S}
There exist $\rho_0>0$ and $C_{\bsigma}>0$ such that
\begin{equation}\label{eq:S-pointwise}
  |\bsigma(x)|\le C_{\bsigma}|x-x_0|^{-1}
  \qquad\text{for a.e.\ }x\in B_{\rho_0}(x_0)\setminus\{x_0\} .
\end{equation}
No continuity of the tangential component of $\bsigma$ across the
interfaces is assumed.
\end{assumption}

This is a bound on the size of $\bsigma$ alone: it involves no
derivative, and it is all that Section~\ref{subsec:sharpness}, the patch
construction of Section~\ref{subsec:patch} and
Lemma~\ref{lem:patch-interpolant} require, together with the geometry of
$\omega_{x_0}$.  What the rate of
Section~\ref{subsec:global-rate} needs in addition is derivative
information away from the pole, which is stated there as
\eqref{eq:away-derivative}.

\subsection{The cost at the pole}
\label{subsec:sharpness}

By Assumption~\ref{ass:S}, if $\omega\subset B_{Ch}(x_0)$ with
$Ch\le\rho_0$, polar coordinates give
\begin{equation}\label{eq:sigma-patch-size}
  \|\bsigma\|_{L^p(\omega)}
  \le C_{\bsigma}\Bigl(\int_0^{Ch}r^{-p}r\,dr\Bigr)^{1/p}
  \le CC_{\bsigma}h^{2/p-1},
\end{equation}
where $p<2$ is used in the last step.  This subsection shows that for a
nonzero homogeneous $r^{-1}$ leading term $h^{2/p-1}$ is a lower bound
as well, for every field of $\RT_0$.  The statement is independent of
the method and of the choice of interpolant; the only geometric input
is the shape of the pole element after scaling.  We begin on a single
element.

\begin{thm}[Local $\RT_0$ approximation of an $r^{-1}$ singularity]
\label{thm:sharp-local}
Let $1<p<2$, let $K_1$ be a fixed triangle with $0\in\overline{K_1}$, and
let $\bq\in L^p(K_1)^2$ satisfy the scaling identity
$\bq(t\bx)=t^{-1}\bq(\bx)$ for $0<t\le1$ and a.e.\ $\bx\in K_1$, and let
$\bq$ be not identically zero on $K_1$.  Then
\begin{equation}\label{eq:sharp-local}
\inf_{\btau_h\in\RT_0(hK_1)}\|\bq-\btau_h\|_{L^p(hK_1)}\simeq h^{2/p-1},
\end{equation}
with constants independent of $h$.  Only the scaling identity is used;
nothing else about $\bq$ enters.
\end{thm}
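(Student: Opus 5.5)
The proof reduces everything to the fixed triangle $K_1$ by a scaling argument, so that the claim becomes a statement about one positive constant $c_1:=\inf_{\btau\in\RT_0(K_1)}\|\bq-\btau\|_{L^p(K_1)}$. I would prove that $0<c_1<\infty$, and then show that the scaled infimum equals $h^{2/p-1}c_1$ exactly. If $h\le1$ is required for the scaling identity to apply, I restrict to $0<h\le1$, which is the regime of interest.

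\emph{Step 1 (scaling).} For $\btau_h\in\RT_0(hK_1)$, set $\hat\btau(\hat\bx):=h\,\btau_h(h\hat\bx)$. The space $\RT_0$ is invariant under this map: $\ba+b\bx$ becomes $h\ba+h^2b\hat\bx$, so the map is a bijection $\RT_0(hK_1)\to\RT_0(K_1)$. The scaling identity gives $\bq(h\hat\bx)=h^{-1}\bq(\hat\bx)$. Changing variables $\bx=h\hat\bx$, $d\bx=h^2d\hat\bx$, we get
\[
\|\bq-\btau_h\|_{L^p(hK_1)}^p=h^2\int_{K_1}h^{-p}|\bq(\hat\bx)-\hat\btau(\hat\bx)|^p\,d\hat\bx ,
\]
so $\|\bq-\btau_h\|_{L^p(hK_1)}=h^{2/p-1}\|\bq-\hat\btau\|_{L^p(K_1)}$. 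Taking the infimum over the bijection gives
\[
\inf_{\btau_h\in\RT_0(hK_1)}\|\bq-\btau_h\|_{L^p(hK_1)}=h^{2/p-1}c_1 ,
\]
with equality, not just $\simeq$. Only the scaling identity on $K_1$ is used, consistent with the last sentence of the statement.

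\emph{Step 2 ($c_1<\infty$).} Take $\btau=0$; then $c_1\le\|\bq\|_{L^p(K_1)}<\infty$ because $\bq\in L^p(K_1)^2$.

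\emph{Step 3 ($c_1>0$).} This is the main point. $\RT_0(K_1)$ is a three-dimensional subspace of $L^p(K_1)^2$, so it is closed and the infimum is attained. Hence $c_1=0$ would force $\bq=\ba+b\bx$ a.e. on $K_1$ for some $\ba\in\Rone^2$, $b\in\Rone$. I would derive a contradiction from the scaling identity.

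Apply the identity to the polynomial representative: $\bq(t\bx)=t^{-1}\bq(\bx)$ for a.e. $\bx$. One must check that $t\bx\in K_1$ whenever $\bx\in K_1$; this holds because $K_1$ is star-shaped with respect to $0\in\overline{K_1}$. It gives $\ba+tb\bx=t^{-1}(\ba+b\bx)$ a.e., for each fixed $t\in(0,1)$. Both sides are affine, so they agree identically, and comparing coefficients gives $\ba=t^{-1}\ba$ and $tb=t^{-1}b$. Choosing $t=1/2$ forces $\ba=0$ and $b=0$, so $\bq=0$ a.e., contradicting the assumption that $\bq$ is not identically zero.

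A cleaner viewpoint, which I may use instead: homogeneity of degree $-1$ forces $|\bq|\gtrsim$ a nonzero function of angle times $r^{-1}$. That is incompatible with boundedness unless $\bq=0$, since $\bq\in L^\infty$ near $0$ together with $\bq(t\bx)=t^{-1}\bq(\bx)$ and $t\to0$ yields $\bq=0$. Either version works.

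The measure-theoretic care in Step 3 is mild: the identity holds for a.e. $\bx$ and each $t$, and I only need a single $t$. The main obstacle is therefore just confirming that the dilation $\bx\mapsto t\bx$ maps $K_1$ into itself, which is convexity plus $0\in\overline{K_1}$. With Steps 1–3, \eqref{eq:sharp-local} holds with both constants equal to $c_1$.
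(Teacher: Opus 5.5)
Your proof is correct and matches the paper's argument: the same dilation bijection $\RT_0(hK_1)\to\RT_0(K_1)$ gives the exact identity with factor $h^{2/p-1}$, finiteness of the reference infimum comes from $\bq\in L^p(K_1)^2$, and positivity comes from closedness of the finite-dimensional space together with the scaling identity forcing $\ba=0$, $b=0$. You also make explicit two points the paper leaves implicit: that $tK_1\subset K_1$ by convexity and $0\in\overline{K_1}$, and that $h\le1$ is needed for the scaling identity to apply on $hK_1$.
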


\begin{proof}
For $\btau_h(\bx)=\ba+b\bx\in\RT_0(hK_1)$ define $\btau_1(\by):=h\btau_h(h\by)=h\ba+bh^2\by\in\RT_0(K_1)$. This is a bijection between $\RT_0(hK_1)$ and $\RT_0(K_1)$. Since $\bq(h\by)=h^{-1}\bq(\by)$ and $dx=h^2dy$,
$\|\bq-\btau_h\|_{L^p(hK_1)}^p=h^{2-p}\|\bq-\btau_1\|_{L^p(K_1)}^p$.
Taking the infimum gives \eqref{eq:sharp-local}. The reference-element infimum is finite because $\bq\in L^p(K_1)^2$ by hypothesis, and it is positive because $\RT_0(K_1)$ is finite dimensional and hence closed in $L^p(K_1)^2$, while $\bq\notin\RT_0(K_1)$: if $\bq$ agreed a.e.\ with some $\ba+b\bx\in\RT_0(K_1)$, the scaling identity would give $t^{-1}(\ba+b\bx)=\ba+bt\bx$ for every $0<t\le1$ and a.e.\ $\bx\in K_1$, forcing $\ba=0$ and $b=0$.
\end{proof}

The same lower bound holds for the error over the whole domain, with the
size of the pole element in place of $h$.

\begin{cor}[No $\RT_0$ field approximates the singularity better]
\label{cor:global-lower}
Let $1<p<2$ and let $K^{x_0}$ be an element of $\cT_h$ whose closure
contains $x_0$, of the form $K^{x_0}=x_0+h_{K^{x_0}}K_1$ with $K_1$ a
fixed triangle satisfying $0\in\overline{K_1}$ and $\diam K_1=1$, so
that $h_{K^{x_0}}$ is the diameter of $K^{x_0}$.  Let $\bq$ be a fixed field on a neighborhood of $x_0$, lying
in $L^p$ there, with $\bq\bigl(x_0+t(x-x_0)\bigr)=t^{-1}\bq(x)$ for
$0<t\le1$ and not identically zero on $K^{x_0}$, a condition
independent of $h_{K^{x_0}}$, by that identity.  Assume that on
$K^{x_0}$
\begin{equation}\label{eq:leading-singular-part}
\bsigma=\bq+\widetilde{\bsigma},\qquad
\|\widetilde{\bsigma}\|_{L^p(K^{x_0})}=o(h_{K^{x_0}}^{2/p-1}).
\end{equation}
Then there is $c>0$ such that
\begin{equation}\label{eq:global-lower}
\|\bsigma-\btau_h\|_{L^p(\O)}\ge c\,h_{K^{x_0}}^{2/p-1}
\qquad\text{for every }\btau_h\in\RT_0(\cT_h)
\end{equation}
and all $h_{K^{x_0}}$ small enough.  In particular
$\|\bsigma-\bsigma_h\|_{L^p(\O)}\ge c\,h_{K^{x_0}}^{2/p-1}$.
\end{cor}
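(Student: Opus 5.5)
The plan is to localize the global error to the pole element, translate so that $x_0=0$, and apply Theorem~\ref{thm:sharp-local} there, absorbing the remainder $\widetilde{\bsigma}$ through its little-$o$ bound.

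Fix $\btau_h\in\RT_0(\cT_h)$. Since $\RT_0(\cT_h)|_{K^{x_0}}\subset\RT_0(K^{x_0})$, the error on the whole domain is bounded below by its restriction:
\[
\|\bsigma-\btau_h\|_{L^p(\O)}\ge\|\bsigma-\btau_h\|_{L^p(K^{x_0})}
\ge\|\bq-\btau_h\|_{L^p(K^{x_0})}-\|\widetilde{\bsigma}\|_{L^p(K^{x_0})} .
\]
The first step uses only that $\btau_h|_{K^{x_0}}$ is a single affine Raviart--Thomas field. The second is the triangle inequality applied to the decomposition \eqref{eq:leading-singular-part}. Translating by $x_0$ maps $\RT_0(K^{x_0})$ onto $\RT_0(h_{K^{x_0}}K_1)$, since $\ba+b(\by+x_0)$ is again of the form $\ba'+b\by$.

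For the leading term, write $\bq_0(\by):=\bq(x_0+\by)$. The hypothesis on $\bq$ is exactly the scaling identity $\bq_0(t\by)=t^{-1}\bq_0(\by)$ required in Theorem~\ref{thm:sharp-local}.

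The hardest step is pinning down what $\bq_0$ on the fixed reference triangle $K_1$ is, since $\bq$ is only given on a neighborhood of $x_0$ and $K_1$ may extend beyond it. I would handle this by defining $\bq_0$ on $K_1$ through the scaling identity itself: for $\by\in K_1$, set $\bq_0(\by):=s\,\bq(x_0+s\by)$ with $s$ small. This is independent of $s$ by homogeneity, it lies in $L^p(K_1)$ because $\bq$ is in $L^p$ near $x_0$ and is homogeneous of degree $-1$ with $p<2$, and it is not identically zero because the nonvanishing condition on $K^{x_0}$ is scale invariant. Equivalently, one can apply the scaling argument of Theorem~\ref{thm:sharp-local} directly between $h_{K^{x_0}}K_1$ and a fixed small scaled copy $\varepsilon K_1$ lying inside the neighborhood. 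Either way Theorem~\ref{thm:sharp-local} applies and yields a constant $c_1>0$, independent of $h_{K^{x_0}}$, with
\[
\inf_{\btau\in\RT_0(K^{x_0})}\|\bq-\btau\|_{L^p(K^{x_0})}\ge c_1h_{K^{x_0}}^{2/p-1}.
\]

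For the remainder, \eqref{eq:leading-singular-part} gives $\|\widetilde{\bsigma}\|_{L^p(K^{x_0})}\le\frac{c_1}{2}h_{K^{x_0}}^{2/p-1}$ once $h_{K^{x_0}}$ is small enough. Combining the two bounds gives \eqref{eq:global-lower} with $c=c_1/2$, uniformly in $\btau_h$. The final claim follows by taking $\btau_h=\bsigma_h\in\RT_0(\cT_h)$, the discrete solution of Proposition~\ref{prop:discrete-wellposed}.
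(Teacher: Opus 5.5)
Your proposal is correct and follows the paper's proof essentially step for step. You restrict to $K^{x_0}$, fix one scale $s$ (the paper's $h_*$) so that $s\,\bq(x_0+s\,\cdot)$ is a single fixed nonzero field on $K_1$ serving every pole element, apply Theorem~\ref{thm:sharp-local}, and absorb $\widetilde{\bsigma}$ through the little-$o$ bound; the only cosmetic point is that the $L^p(K_1)$ membership of that field follows directly from $\bq\in L^p(x_0+sK_1)$, with no appeal to homogeneity or to $p<2$.
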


\begin{proof}
Restrict to $K^{x_0}$ and use that $\btau_h|_{K^{x_0}}\in\RT_0(K^{x_0})$:
\[
\|\bsigma-\btau_h\|_{L^p(\O)}
\ge\|\bsigma-\btau_h\|_{L^p(K^{x_0})}
\ge \inf_{\btau\in\RT_0(K^{x_0})}\|\bsigma-\btau\|_{L^p(K^{x_0})} .
\]
Fix $h_*>0$ with $x_0+h_*K_1$ inside the neighborhood on which $\bq$ is
defined, and set $\bq_1(y):=h_*\bq(x_0+h_*y)$ for $y\in K_1$.  For
$h:=h_{K^{x_0}}\le h_*$ and $t:=h/h_*\le1$ the scaling identity gives
$\bq(x_0+hy)=t^{-1}\bq(x_0+h_*y)$, so that $h\,\bq(x_0+hy)=\bq_1(y)$:
one and the same fixed nonzero field on $K_1$ serves every pole element.
Theorem~\ref{thm:sharp-local} applied to it, after translation to $x_0$,
therefore bounds the last infimum below by $c\,h_{K^{x_0}}^{2/p-1}$ with
$c$ depending only on $\bq_1$, $K_1$ and $p$; subtracting
$\|\widetilde{\bsigma}\|_{L^p(K^{x_0})}$ and using
\eqref{eq:leading-singular-part} leaves $c'h_{K^{x_0}}^{2/p-1}$ for
$h_{K^{x_0}}$ small enough.

\end{proof}

A refinement strategy that produces only finitely many pole-element
shapes up to translation and scaling, as newest-vertex bisection does,
is covered as well, provided the same fixed $\bq$ is nonzero on each
shape that occurs: the corollary then applies to each of them with a
positive constant, and the smallest of those finitely many constants
serves for all.

\subsection{A sector example with a non-integrable normal moment}
\label{subsec:sector}

Where $\Pi_h^{\rm rt}\bsigma$ is defined it is subject to
Corollary~\ref{cor:global-lower} like every other field of $\RT_0$, and
it can do no better: for the radial flux of
Section~\ref{subsec:regularity} it is defined on every element, and on
the element carrying the pole all three of its moments vanish when
$f_1=0$, so it is zero there and misses the singular field entirely.
This subsection gives a coefficient for which its moments do not exist
at all.

Write the term $\bq$ of Section~\ref{subsec:sharpness} in polar form, $\bq=r^{-1}S(\theta)$ with
$S=S_re_r+S_\theta e_\theta$, $e_r=(\cos\theta,\sin\theta)^T$ and
$e_\theta=(-\sin\theta,\cos\theta)^T$.  Section~\ref{subsec:sharpness} used
only the size of $\bq$, to which both parts contribute.  The angular
part decides something further, namely whether the canonical moments
exist.  An edge $e$ through the pole lies at a fixed angle $\theta_e$
and has $\bn_e=\pm e_\theta$, so that the leading contribution to
$\int_e|\bsigma\cdot\bn_e|\,ds$ is
$|S_\theta(\theta_e)|\int_0^{h_e}r^{-1}\,dr$: the moment on that edge is
lost as soon as $S_\theta(\theta_e)\neq0$.

\begin{exm}[An explicit interface singularity]
\label{ex:sector}
Let $x_0=0$ and use polar coordinates $(r,\theta)$. Set
\begin{equation}\label{eq:sector-A}
A(\theta)=
\begin{cases}
\mathrm{diag}(a,1), & 0<\theta<\pi/2,\\
I, & \pi/2<\theta<2\pi ,
\end{cases}
\qquad a>0 .
\end{equation}
Then $-\gradt(A\nabla\Phi)=\delta_0$ admits the explicit solution
\begin{equation}\label{eq:sector-solution}
\Phi(r,\theta)=-\gamma\log r+U(\theta),
\end{equation}
where
\[
U(\theta)=
\begin{cases}
-\dfrac{\gamma}{2}\log\bigl(1+(a-1)\sin^2\theta\bigr)
 -\dfrac{C_A}{\sqrt a}\arctan(\sqrt a\tan\theta), & 0<\theta<\pi/2,\\[6pt]
-\dfrac{\gamma}{2}\log a-\dfrac{C_A\pi}{2\sqrt a}
 -C_A\bigl(\theta-\dfrac{\pi}{2}\bigr), & \pi/2<\theta<2\pi,
\end{cases}
\]
and
\begin{equation}\label{eq:sector-constants}
\gamma=\left(\frac{\pi(3+\sqrt a)}{2}
 +\frac{\sqrt a\,(\log a)^2}{2\pi(3\sqrt a+1)}\right)^{-1},\qquad
C_A=-\frac{\sqrt a\,\log a}{\pi(3\sqrt a+1)}\,\gamma .
\end{equation}
The constant that matters below is $C_A$, and it vanishes exactly when
$a=1$: the difficulty is carried by $\log a$ alone, and it is present
for every anisotropy however mild.  The experiments of
Section~\ref{sec:numerics} take $a=4$, for which
$\gamma=\bigl(\tfrac{5\pi}{2}+\tfrac{4(\log2)^2}{7\pi}\bigr)^{-1}$ and
$C_A=-\tfrac{4\log2}{7\pi}\gamma$, that is
$\gamma=1.2592\cdot10^{-1}$ and $|C_A|=1.5876\cdot10^{-2}$.
Taking $\O=B_1(0)$, $f_1=0$ and $g=\Phi|_{\p\O}$, the function $u=\Phi$ is
exactly the solution of \eqref{eq:deltaeq}, so the discussion below
applies to the model problem of this paper and not only to a
whole-plane model.
Here $\arctan(\sqrt a\tan\theta)$ is taken continuously on $(0,\pi/2)$. The constants in \eqref{eq:sector-constants} are determined by the periodicity of $U$ and the normalization $\int_{\partial B_\varepsilon}-A\nabla\Phi\cdot\bn\,ds=1$. A direct calculation shows that $U$ is bounded and Lipschitz, and that $|\nabla\Phi|\simeq r^{-1}$ near the pole.
\end{exm}

\begin{prop}[Canonical Raviart--Thomas moments may be undefined]
\label{prop:moments-fail}
For the coefficient \eqref{eq:sector-A} with $a\neq1$, let $\bsigma$ be the modified flux corresponding to the explicit solution above. Then $\int_e|\bsigma\cdot\bn_e|\,ds=\infty$ on any straight edge $e$ emanating radially from $x_0$, so that the degree of freedom $\int_e\bsigma\cdot\bn_e\,ds$ is not
defined.  The same integral is a degree of freedom of the
Brezzi--Douglas--Marini family \cite{BrezziDouglasMarini1985}, so the
failure is not particular to $\RT_0$.
\end{prop}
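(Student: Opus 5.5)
The plan is to compute the normal component of $\bsigma$ on a radial edge exactly, using that $u=\Phi$ and $f_1=0$ in the model problem of Example~\ref{ex:sector}. Write $\bsigma=-A\nabla\Phi-\bff_2$. On a straight edge $e$ emanating from $x_0=0$ at angle $\theta_e$ we have $\bn_e=\pm e_\theta$. Since $\bff_2=\frac1{2\pi}r^{-1}e_r$ is radial, $\bff_2\cdot\bn_e\equiv0$ on $e$, as already noted in Remark~\ref{rem:other-methods}. Hence $\bsigma\cdot\bn_e=\mp(A\nabla\Phi)\cdot e_\theta$, and everything reduces to the angular component of the physical flux.

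\textbf{Step 1: form of the flux.} From \eqref{eq:sector-solution},
\[
\nabla\Phi=r^{-1}\bigl(-\gamma e_r+U'(\theta)e_\theta\bigr),
\]
so $A\nabla\Phi=r^{-1}S(\theta)$, where $S=S_re_r+S_\theta e_\theta$ depends only on $\theta$ because $A=A(\theta)$. This is exactly the homogeneous form $\bq=r^{-1}S(\theta)$ of Section~\ref{subsec:sector}, with no remainder.

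\textbf{Step 2: $S_\theta$ is a global constant.} In each open sector, away from the pole, $\gradt(A\nabla\Phi)=0$. A short polar computation gives $\gradt(r^{-1}S_r(\theta)e_r)=0$ and $\gradt(r^{-1}S_\theta(\theta)e_\theta)=r^{-2}S_\theta'(\theta)$. Hence $S_\theta$ is constant on each sector. Because $\Phi$ is a distributional solution on $\Rone^2\setminus\{0\}$, the normal component of $A\nabla\Phi$ is continuous across the interface rays $\theta=0$ and $\theta=\pi/2$. On those rays the normal is $e_\theta$, so the two sector constants coincide and $S_\theta$ is one constant on the whole circle.

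On $\pi/2<\theta<2\pi$ we have $A=I$ and $U'(\theta)=-C_A$, so $S_\theta\equiv-C_A$ everywhere. As a cross-check, one can also differentiate the explicit $U$ on $(0,\pi/2)$ and verify directly that $e_\theta^TA(\theta)\bigl(-\gamma e_r+U'e_\theta\bigr)=-C_A$. I expect this verification, or equivalently justifying the normal-flux continuity for the given formula, to be the only real obstacle; I would try the direct differentiation first, since it avoids any appeal to weak-solution theory. For an edge lying on an interface ray, the two one-sided traces of the normal component agree by the same continuity, so the normal moment is unambiguous there as well.

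\textbf{Step 3: divergence of the moment.} It follows that $|\bsigma\cdot\bn_e|=|C_A|\,r^{-1}$ on $e$, and therefore
\[
\int_e|\bsigma\cdot\bn_e|\,ds=|C_A|\int_0^{h_e}r^{-1}\,dr=\infty .
\]
By \eqref{eq:sector-constants}, $C_A$ is a nonzero multiple of $\log a$ times $\gamma>0$, so $C_A\neq0$ exactly when $a\neq1$. Consequently the Lebesgue integral $\int_e\bsigma\cdot\bn_e\,ds$ does not exist, and the canonical degree of freedom is undefined.

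\textbf{Step 4: BDM.} The Brezzi--Douglas--Marini degrees of freedom include the edge moments $\int_e\btau\cdot\bn_e\,q\,ds$ with $q=1$. The same integral therefore appears among them, and the failure carries over to that family.
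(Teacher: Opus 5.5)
Your proposal is correct and follows the paper's proof essentially step for step. The shared steps are the homogeneous form $A\nabla\Phi=r^{-1}S(\theta)$, constancy of $S_\theta$ on each sector from $\gradt(A\nabla\Phi)=r^{-2}S_\theta'$, equality of the sector constants by conormal continuity across the rays, the value $-C_A$ read off in the isotropic sector, radiality of $\bff_2$, and the divergent $\int_0^{h_e}r^{-1}\,dr$. Your proposed cross-check by direct differentiation also goes through: on $(0,\pi/2)$ one finds $U'(\theta)\bigl(1+(a-1)\sin^2\theta\bigr)=-\gamma(a-1)\sin\theta\cos\theta-C_A$, which cancels the contribution $-\gamma\,e_\theta^{T}Ae_r=\gamma(a-1)\sin\theta\cos\theta$ and gives $S_\theta\equiv-C_A$ there without any appeal to the weak formulation.
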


\begin{proof}
The construction above gives $-A\nabla\Phi\cdot e_\theta=C_A/r$, with $C_A\neq0$ by \eqref{eq:sector-constants} because $a\neq1$: since $\Phi=-\gamma\log r+U(\theta)$ and $A$ is constant in each sector, $A\nabla\Phi=r^{-1}(V_r(\theta)e_r+V_\theta(\theta)e_\theta)$ and $\gradt(A\nabla\Phi)=r^{-2}V_\theta'(\theta)$, so $V_\theta$ is constant in each sector, and continuity of the conormal flux across the rays $\theta=0$ and $\theta=\pi/2$ makes the two constants equal; in the isotropic sector $V_\theta=U'=-C_A$. Since $\bff_2=(2\pi r)^{-1}e_r$ is purely radial, $\bsigma\cdot e_\theta=(-A\nabla\Phi-\bff_2)\cdot e_\theta=C_A/r$. On a radial edge $\bn_e=\pm e_\theta$, and therefore
$\int_e|\bsigma\cdot\bn_e|\,ds=|C_A|\int_0^{h_e}r^{-1}\,dr=\infty$.
\end{proof}

\begin{remark}
Proposition~\ref{prop:moments-fail} concerns the canonical degrees of
freedom, not the $L^p$ approximation problem itself; the free
$L^p$-best approximation by $\RT_0$ remains well posed for every
$1<p<2$, and by Section~\ref{subsec:sharpness} it is of order
$h_{K^{x_0}}^{2/p-1}$ here, as in every case covered by
Corollary~\ref{cor:global-lower}.  Problem~(S) of
Section~\ref{sec:numerics} is this coefficient with $a=4$; problems~(I)
and~(A) have $S_\theta=0$, so their canonical moments all exist.
\end{remark}

\subsection{An equilibrated comparison flux near the pole}
\label{subsec:patch}

Throughout this subsection we assume \eqref{eq:away-regularity} below,
so that the edge moments of $\bsigma$ used here exist: on a general
$\btau\in H^{p}(\divvr;\O)$ the normal trace is only a distribution in
$W^{-1/p,p}(\p\O)$ and cannot be integrated over a single edge.  The
same difficulty, in the $L^2$ setting and for a solution of low
regularity, is what rules out a direct application of the standard
mixed theory in \cite[Rem.~16]{Zhang:20mixed}.

We construct $\bsigma_I\in\RT_0^{f_1}(\cT_h)$ which coincides with $\Pi_h^{\rm rt}\bsigma$ away from a small patch of the pole and is defined there by local flux balance. Choose an edge-connected patch $\omega_{x_0}$, one in which any two elements are joined by a chain of elements sharing full edges, containing every element whose closure contains $x_0$ and, if necessary, enlarge it by a fixed number of element layers. Edge-connectedness is what makes the balance system below solvable under its single compatibility condition, and a vertex or element star has it.  Write
$h_{x_0}:=\max\{h_K:\ K\subset\omega_{x_0}\}$ for the mesh size at the
pole, so that
\begin{equation}\label{eq:patch-geometry}
\diam(\omega_{x_0})\simeq h_{x_0},\qquad
\mathrm{dist}(x_0,\partial\omega_{x_0})\simeq h_{x_0},
\end{equation}
with a uniformly bounded number of elements.  Shape regularity and the
bounded number of layers make all of them comparable, so that
$h_{K^{x_0}}\simeq h_{x_0}$ for the pole element of
Corollary~\ref{cor:global-lower}.  Estimates written in either scale
therefore have the same order; we keep $h_{K^{x_0}}$ for the local lower
bound and $h_{x_0}$ for the patch estimates. Outside $\omega_{x_0}$ set $\bsigma_I:=\Pi_h^{\rm rt}\bsigma$. On every boundary edge $e\subset\partial\omega_{x_0}$ prescribe $\int_e\bsigma_I\cdot\bn\,ds=\int_e\bsigma\cdot\bn\,ds$, and choose the interior edge fluxes so that
\begin{equation}\label{eq:patch-balance}
\sum_{e\subset\partial K}\int_e\bsigma_I\cdot\bn_K\,ds=\int_KQ_hf_1\,dx\qquad\forall\,K\subset\omega_{x_0}.
\end{equation}
The system is compatible because summing over the patch gives
\[
\int_{\partial\omega_{x_0}}\bsigma_I\cdot\bn\,ds=\int_{\partial\omega_{x_0}}\bsigma\cdot\bn\,ds=\int_{\omega_{x_0}}f_1\,dx=\int_{\omega_{x_0}}Q_hf_1\,dx.
\]
The middle equality is the divergence theorem on a region containing the
pole, and it is legitimate precisely because the splitting has removed
the measure: $\bsigma\in L^p(\omega_{x_0})^2$ with
$\gradt\bsigma=f_1\in L^p(\omega_{x_0})$, so
$\bsigma\in H^{p}(\divvr;\omega_{x_0})$ and no boundary term survives at
$x_0$.
Indeed, \eqref{eq:patch-balance} is a linear system for the fluxes on
the interior edges of $\omega_{x_0}$, with one equation per element and
with matrix entries $0$ and $\pm1$; the right-hand side collects the
cell terms and the prescribed boundary fluxes. Summing the equations
cancels every interior flux, so the identity above is the only
condition the right-hand side must satisfy, and the system is solvable.
We take the solution of smallest Euclidean norm; a patch problem of this
kind, with a construction that fixes the remaining kernel, is used in the
equilibrated residual estimator of \cite{CaiZhang2012}. Its matrix is
determined by the combinatorics of $\omega_{x_0}$ alone, and
\eqref{eq:patch-geometry} together with shape regularity leaves only
finitely many possibilities, so the resulting interior fluxes are
bounded by the boundary fluxes and the cell terms with a constant
depending only on the shape-regularity parameter.
That the resulting field is $H(\divvr)$-conforming and satisfies $\gradt\bsigma_I=Q_hf_1$, so that $\bsigma_I\in\RT_0^{f_1}(\cT_h)$, is verified in Lemma~\ref{lem:patch-interpolant}.

The next lemma shows that the patch modification has the natural $L^p$
size of the singular flux, namely the $h_{x_0}^{2/p-1}$ of
Section~\ref{subsec:sharpness}, together with the source contribution
$h_{x_0}\|f_1\|_{L^p(\omega_{x_0})}$.

\begin{lem}[Patch $\RT_0$ comparison flux]
\label{lem:patch-interpolant}
Let $1<p<2$, let Assumption~\ref{ass:S} hold, and let $\omega_{x_0}$ satisfy \eqref{eq:patch-geometry} and $\omega_{x_0}\subset B_{\rho_0}(x_0)$, which holds once $h_{x_0}$ is small enough. Assume in addition that
\begin{equation}\label{eq:away-regularity}
\bsigma|_K\in W^{s_K,p}(K)^2\quad\text{with }s_K\in(1/p,1]\qquad\text{for every }K\in\cT_h\text{ not contained in }\omega_{x_0},
\end{equation}
so that the normal trace of $\bsigma$ on $\p K$ lies in $L^1(\p K)$, the canonical interpolant $\Pi_h^{\rm rt}\bsigma$ is defined on $\O\setminus\omega_{x_0}$, and the normal moments on $\partial\omega_{x_0}$ used in the patch construction are well defined. Then the field $\bsigma_I$ constructed in Section~\ref{subsec:patch} belongs to $\RT_0^{f_1}(\cT_h)$ and satisfies
\begin{equation}\label{eq:patch-estimate}
\|\bsigma-\bsigma_I\|_{L^p(\O)}
\le \|\bsigma-\Pi_h^{\rm rt}\bsigma\|_{L^p(\O\setminus\omega_{x_0})}
+C C_{\bsigma}h_{x_0}^{2/p-1}+Ch_{x_0}\|f_1\|_{L^p(\omega_{x_0})},
\end{equation}
where $C$ depends only on $p$, the shape-regularity parameter, and the constants in \eqref{eq:patch-geometry}.
\end{lem}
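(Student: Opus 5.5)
The proof has two parts: first the membership $\bsigma_I\in\RT_0^{f_1}(\cT_h)$, then the estimate \eqref{eq:patch-estimate}, obtained by splitting the $L^p$ norm over $\O\setminus\omega_{x_0}$ and $\omega_{x_0}$.

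\emph{Conformity and equilibration.} Every element has its three normal moments assigned, and each edge receives a single value. For edges outside $\overline{\omega_{x_0}}$ this is the canonical moment. For edges on $\p\omega_{x_0}$ it is again the canonical moment $\int_e\bsigma\cdot\bn\,ds$, which is defined by \eqref{eq:away-regularity} applied to the exterior neighbour; this value is shared by the interpolant outside and by the patch elements inside. For interior edges of the patch it is the balance solution. Hence normal traces are single valued and $\bsigma_I\in\RT_0(\cT_h)$. The divergence identity follows elementwise. Outside the patch we have $\gradt\Pi_h^{\rm rt}\bsigma=Q_hf_1$. Inside, the divergence is the constant $|K|^{-1}\sum_{e}\int_e\bsigma_I\cdot\bn_K\,ds$, which equals $Q_hf_1$ by \eqref{eq:patch-balance}.

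\emph{Estimate.} On $\O\setminus\omega_{x_0}$ the field $\bsigma_I$ equals $\Pi_h^{\rm rt}\bsigma$, which gives the first term. On the patch we write $\|\bsigma-\bsigma_I\|_{L^p(\omega_{x_0})}\le\|\bsigma\|_{L^p(\omega_{x_0})}+\|\bsigma_I\|_{L^p(\omega_{x_0})}$. Because $\omega_{x_0}\subset B_{Ch_{x_0}}(x_0)\subset B_{\rho_0}(x_0)$, the first summand is bounded by $CC_{\bsigma}h_{x_0}^{2/p-1}$ through \eqref{eq:sigma-patch-size}.

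For $\bsigma_I$ we scale the $\RT_0$ basis. On a shape-regular element, $\|\bpsi_e\|_{L^\infty(K)}\simeq h_K^{-1}$, so $\|\bsigma_I\|_{L^p(K)}\le Ch_K^{2/p-1}\max_{e\subset\p K}|F_e|$, where $F_e$ denotes the edge fluxes. The minimal-norm solution of the balance system, whose matrix ranges over finitely many combinatorial types, satisfies $\max|F_e|\le C(\max_{e\subset\p\omega_{x_0}}|F_e|+\sum_{K\subset\omega_{x_0}}|\int_KQ_hf_1|)$. The cell terms are bounded by Hölder: $|\int_K Q_hf_1|\le|K|^{1/p'}\|f_1\|_{L^p(K)}\le Ch_{x_0}^{2-2/p}\|f_1\|_{L^p(\omega_{x_0})}$. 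After multiplication by $h^{2/p-1}$ they give exactly $Ch_{x_0}\|f_1\|_{L^p(\omega_{x_0})}$.

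\emph{Main obstacle: the boundary fluxes.} We must show $|\int_e\bsigma\cdot\bn\,ds|\le CC_{\bsigma}$ for $e\subset\p\omega_{x_0}$, uniformly in $h$; multiplying by $h^{2/p-1}$ then gives the singular term. The natural route uses \eqref{eq:patch-geometry}: since $\mathrm{dist}(x_0,\p\omega_{x_0})\ge ch_{x_0}$, Assumption~\ref{ass:S} gives the pointwise bound $|\bsigma|\le CC_{\bsigma}h_{x_0}^{-1}$ on $e$. This requires $\bsigma$ to have a meaningful pointwise trace on $e$, which holds for a.e.\ value by the pointwise bound. Then $|\int_e\bsigma\cdot\bn\,ds|\le |e|\,CC_{\bsigma}h_{x_0}^{-1}\le CC_{\bsigma}$.

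If one prefers to avoid trace values, an alternative is the following. Pick a cutoff $\eta\in W^{1,\infty}$ equal to $1$ on $e$ and supported in an $O(h)$-neighbourhood away from $x_0$, integrate by parts over the adjacent exterior element, and use $\gradt\bsigma=f_1$. This gives $|\int_e\bsigma\cdot\bn\eta\,ds|\le Ch^{-1}\|\bsigma\|_{L^1}+\|f_1\|_{L^1}$ over a region where $|\bsigma|\lesssim C_{\bsigma}h^{-1}$, which is again $O(C_{\bsigma})$ plus an $f_1$ term of the same form as above. Localizing this to a single edge is the delicate point, and the pointwise route is the one I would try first.

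Summing over the boundedly many patch elements then yields \eqref{eq:patch-estimate}.
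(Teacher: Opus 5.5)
Your proposal is correct and follows the paper's proof essentially step by step. The shared steps are the same conformity and equilibration argument, the triangle inequality on the patch with \eqref{eq:sigma-patch-size}, Piola (basis-function) scaling of the edge moments, uniformly bounded minimum-norm solution operators, H\"older for the cell terms, and the bound $|\bsigma|\le CC_{\bsigma}h_{x_0}^{-1}$ for the outer fluxes; your alternative cutoff argument is not needed. The one step to tighten is the trace: an a.e.\ bound in two dimensions says nothing on an edge, which has measure zero, so, as the paper does, take the trace of $\bsigma|_{K^+}\in W^{s_{K^+},p}(K^+)^2$, $s_{K^+}p>1$, on the exterior neighbour $K^+$ (at distance $\simeq h_{x_0}$ from $x_0$) and note that it inherits the $L^\infty(K^+)$ bound.
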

\begin{proof}
Outside $\omega_{x_0}$ we have $\bsigma_I=\Pi_h^{\rm rt}\bsigma$. Hence the commuting property of the canonical Raviart--Thomas interpolant gives $\gradt\bsigma_I=\gradt(\Pi_h^{\rm rt}\bsigma)=Q_h(\gradt\bsigma)=Q_hf_1$ on every $K\not\subset\omega_{x_0}$. On the patch, the interior edge fluxes in the construction of Section~\ref{subsec:patch} are shared by the two adjacent elements with opposite orientations, while on $\partial\omega_{x_0}$ the prescribed fluxes agree with the corresponding degrees of freedom of $\Pi_h^{\rm rt}\bsigma$. Therefore the interior and exterior fields fit together to give $\bsigma_I\in\RT_0(\cT_h)$. Moreover, for every $K\subset\omega_{x_0}$ the element balance equation gives $\int_{\partial K}\bsigma_I\cdot\bn_K\,ds=\int_KQ_hf_1\,dx$. Since $\gradt\bsigma_I$ is constant on $K$, $|K|\,\gradt\bsigma_I|_K=\int_KQ_hf_1\,dx=|K|\,Q_hf_1|_K$, and hence $\gradt\bsigma_I=Q_hf_1$ also on the patch. Thus $\bsigma_I\in\RT_0^{f_1}(\cT_h)$.

We next estimate the error on the patch. By \eqref{eq:patch-geometry} the patch has diameter at most $Ch_{x_0}$, so \eqref{eq:sigma-patch-size} gives $\|\bsigma\|_{L^p(\omega_{x_0})}\le CC_{\bsigma}h_{x_0}^{2/p-1}$.

It remains to estimate $\bsigma_I$ on the patch. Let $K\in\cT_h$ and $\btau_h\in\RT_0(K)$. On a fixed reference triangle $\widehat K$, finite-dimensional norm equivalence gives $\|\widehat{\btau}_h\|_{L^p(\widehat K)}\le C\sum_{\widehat e\subset\partial\widehat K}\left|\int_{\widehat e}\widehat{\btau}_h\cdot\widehat{\bn}_{\widehat e}\,d\widehat s\right|$. Under the contravariant Piola transformation the normal flux degrees of freedom are invariant, while shape regularity gives $\|\btau_h\|_{L^p(K)}\le Ch_K^{2/p-1}\|\widehat{\btau}_h\|_{L^p(\widehat K)}$. Consequently,
\begin{equation}\label{eq:patch-proof-piola}
\|\btau_h\|_{L^p(K)}\le Ch_K^{2/p-1}\sum_{e\subset\partial K}\left|\int_e\btau_h\cdot\bn_{K,e}\,ds\right|.
\end{equation}

We first bound the prescribed fluxes on the outer boundary of the patch. Let $e\subset\partial\omega_{x_0}$. Let $K^{+}$ be the element outside $\omega_{x_0}$ adjacent to $e$. By \eqref{eq:patch-geometry}, $\mathrm{dist}(K^{+},x_0)\simeq h_{x_0}$ and $|e|\le Ch_{x_0}$, so Assumption~\ref{ass:S} gives $\|\bsigma\|_{L^\infty(K^{+})}\le CC_{\bsigma}h_{x_0}^{-1}$; since $\bsigma|_{K^{+}}\in W^{s_{K^{+}},p}(K^{+})^2$ with $s_{K^{+}}p>1$ by \eqref{eq:away-regularity}, its trace on $e$ is a function and inherits that bound. No regularity inside the patch is used. Therefore $\left|\int_e\bsigma\cdot\bn_{\omega}\,ds\right|\le\int_e|\bsigma|\,ds\le C C_{\bsigma}h_{x_0}^{-1}|e|\le C C_{\bsigma}$. Thus every prescribed outer flux degree of freedom is bounded uniformly by $C C_{\bsigma}$.

We now consider the interior edge fluxes. Since the patch contains only a uniformly bounded number of elements, there are only finitely many possible balance systems up to connectivity, and the corresponding minimum-Euclidean-norm solution operators are uniformly bounded. Hence, for every interior edge $e$,
\begin{equation}\label{eq:patch-proof-interior-flux}
\left|\int_e\bsigma_I\cdot\bn_e\,ds\right|\le C\left(C_{\bsigma}+\sum_{K\subset\omega_{x_0}}\left|\int_KQ_hf_1\,dx\right|\right).
\end{equation}
Here the first term controls the prescribed outer fluxes, while the second term contains the element balance data.

For the latter, the mean-preserving property of $Q_h$ and Hölder's inequality give $\left|\int_KQ_hf_1\,dx\right|=\left|\int_Kf_1\,dx\right|\le |K|^{1-1/p}\|f_1\|_{L^p(K)}\le Ch_K^{2-2/p}\|f_1\|_{L^p(K)}$. Since the patch contains only a uniformly bounded number of elements and $h_K\le Ch_{x_0}$ there, $\sum_{K\subset\omega_{x_0}}\left|\int_KQ_hf_1\,dx\right|\le Ch_{x_0}^{2-2/p}\|f_1\|_{L^p(\omega_{x_0})}$. Combining this estimate with the bound for the outer fluxes and \eqref{eq:patch-proof-interior-flux}, every edge flux degree of freedom of $\bsigma_I$ on the patch satisfies
\begin{equation}\label{eq:patch-proof-all-fluxes}
\left|\int_e\bsigma_I\cdot\bn_e\,ds\right|\le C C_{\bsigma}+Ch_{x_0}^{2-2/p}\|f_1\|_{L^p(\omega_{x_0})}.
\end{equation}

Applying \eqref{eq:patch-proof-piola} to $\btau_h=\bsigma_I|_K$, using $h_K\le Ch_{x_0}$ and \eqref{eq:patch-proof-all-fluxes}, gives
\[
  \begin{aligned}
  \|\bsigma_I\|_{L^p(K)}
  &\le C C_{\bsigma}h_{x_0}^{2/p-1}
     +C h_{x_0}^{2/p-1}h_{x_0}^{2-2/p}\|f_1\|_{L^p(\omega_{x_0})}\\
  &=C C_{\bsigma}h_{x_0}^{2/p-1}
     +C h_{x_0}\|f_1\|_{L^p(\omega_{x_0})} .
  \end{aligned}
\]
Since $\omega_{x_0}$ contains only a uniformly bounded number of elements, it follows that
\begin{equation}\label{eq:patch-proof-sigmaI}
\|\bsigma_I\|_{L^p(\omega_{x_0})}\le C C_{\bsigma}h_{x_0}^{2/p-1}+C h_{x_0}\|f_1\|_{L^p(\omega_{x_0})}.
\end{equation}

Finally, using $\bsigma_I=\Pi_h^{\rm rt}\bsigma$ outside the patch and the triangle inequality on $\omega_{x_0}$, $\|\bsigma-\bsigma_I\|_{L^p(\O)}\le \|\bsigma-\Pi_h^{\rm rt}\bsigma\|_{L^p(\O\setminus\omega_{x_0})}+\|\bsigma-\bsigma_I\|_{L^p(\omega_{x_0})}\le \|\bsigma-\Pi_h^{\rm rt}\bsigma\|_{L^p(\O\setminus\omega_{x_0})}+\|\bsigma\|_{L^p(\omega_{x_0})}+\|\bsigma_I\|_{L^p(\omega_{x_0})}$. Combining this with \eqref{eq:patch-proof-sigmaI} proves \eqref{eq:patch-estimate}.
\end{proof}

When the leading term of Corollary~\ref{cor:global-lower} does not
vanish, the patch contribution $C_{\bsigma}h_{x_0}^{2/p-1}$ in
\eqref{eq:patch-estimate} has exactly the order that no field of
$\RT_0$ improves on.  The patch construction therefore repairs
definition and equilibrium, not accuracy, and in that regime it costs
nothing asymptotically.

\subsection{Global a priori rates}
\label{subsec:global-rate}

What has been proved so far already gives an estimate on any fitted
mesh.  No quasi-uniformity enters it: the quasi-best approximation
property of Theorem~\ref{thm:flux-quasibest} holds on every mesh, the
patch construction is local to $\omega_{x_0}$, and the canonical
interpolation estimate is elementwise.

\begin{thm}[Flux estimate on a fitted mesh]
\label{thm:flux-mesh}
Let $1<p<2$, let Assumption~\ref{ass:S} (the pointwise bound on
$\bsigma$), the elementwise regularity \eqref{eq:away-regularity} and
Hypothesis~(SP), \eqref{eq:Duran-stability}, hold, let $\cT_h$ be
shape-regular, and let $\omega_{x_0}\subset B_{\rho_0}(x_0)$, which holds
once $h_{x_0}$ is small enough.  Then
\begin{equation}\label{eq:flux-mesh}
\begin{split}
  \|\bsigma-\bsigma_h\|_{L^p(\O)}
  \le C(1+C_{\mathrm{SP}})
  \Bigl(&C_{\bsigma}\,h_{x_0}^{2/p-1}
   +h_{x_0}\|f_1\|_{L^p(\omega_{x_0})}\\
  &+\Bigl(\sum_{K\not\subset\omega_{x_0}}
      h_K^{s_Kp}\,|\bsigma|_{W^{s_K,p}(K)}^{p}\Bigr)^{1/p}\Bigr),
\end{split}
\end{equation}
where $C$ depends only on $p$, on the indices $s_K$, on the
shape-regularity parameter and on the constants in
\eqref{eq:patch-geometry}.  Under
\eqref{eq:Duran-two-term} the same argument gives \eqref{eq:flux-mesh}
with $1+C_{\mathrm{SP}}$ replaced by $1+C_0$ and the oscillation term of
Theorem~\ref{thm:flux-two-term} added.
\end{thm}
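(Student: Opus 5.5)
The estimate follows by chaining three results already in hand. First, Theorem~\ref{thm:flux-quasibest} bounds $\|\bsigma-\bsigma_h\|_{L^p(\O)}$ by $(1+C_{\mathrm{SP}})$ times the equilibrated best approximation error. Second, the comparison flux $\bsigma_I$ of Section~\ref{subsec:patch} is an admissible competitor, since Lemma~\ref{lem:patch-interpolant} places it in $\RT_0^{f_1}(\cT_h)$. The hypotheses of that lemma are exactly those assumed here: Assumption~\ref{ass:S}, \eqref{eq:away-regularity}, $\omega_{x_0}\subset B_{\rho_0}(x_0)$, and the patch geometry \eqref{eq:patch-geometry}. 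Hence
\[
\|\bsigma-\bsigma_h\|_{L^p(\O)}\le(1+C_{\mathrm{SP}})\|\bsigma-\bsigma_I\|_{L^p(\O)},
\]
and \eqref{eq:patch-estimate} supplies the first two terms of \eqref{eq:flux-mesh}, namely $CC_{\bsigma}h_{x_0}^{2/p-1}$ and $Ch_{x_0}\|f_1\|_{L^p(\omega_{x_0})}$. What remains is the canonical interpolation error $\|\bsigma-\Pi_h^{\rm rt}\bsigma\|_{L^p(\O\setminus\omega_{x_0})}$.

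For that term I would use a local estimate on each $K\not\subset\omega_{x_0}$:
\[
\|\bsigma-\Pi_h^{\rm rt}\bsigma\|_{L^p(K)}\le Ch_K^{s_K}|\bsigma|_{W^{s_K,p}(K)},\qquad s_K\in(1/p,1].
\]
The proof goes through the reference element. The map $\widehat\bsigma\mapsto\widehat\bsigma-\widehat\Pi\widehat\bsigma$ is bounded on $W^{s,p}(\widehat K)^2$ because $sp>1$, which makes the normal moments continuous functionals by the trace theorem. This map annihilates constants, since $P_0^2\subset\RT_0$. A Bramble--Hilbert/Deny--Lions argument for fractional seminorms then bounds it by $|\widehat\bsigma|_{W^{s,p}(\widehat K)}$. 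The contravariant Piola map commutes with $\Pi_h^{\rm rt}$ and scales with shape-regularity constants: the $L^p$ norm picks up $h_K^{2/p-1}$ and the $W^{s,p}$ seminorm picks up $h_K^{s+2/p-1}$. Together these give the factor $h_K^{s}$. The constant depends on $s_K$; if the indices range over a finite set, or over a compact subset of $(1/p,1]$, it is uniform, and this is where the stated dependence on the indices $s_K$ comes from. Summing the $p$-th powers over $K\not\subset\omega_{x_0}$ produces the third term of \eqref{eq:flux-mesh}.

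The main obstacle is this interpolation estimate in the fractional range. Two points need care: the uniformity of the reference-element constant as $s_K$ varies, and the Bramble--Hilbert step for $W^{s,p}$ with $s\le1$, which should use the quotient by constants. The geometric constraint is harmless by comparison. Elements touching the patch boundary are handled exactly like the others, because \eqref{eq:away-regularity} is imposed on every element outside $\omega_{x_0}$.

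For the variant under \eqref{eq:Duran-two-term}, I would replace Theorem~\ref{thm:flux-quasibest} by Theorem~\ref{thm:flux-two-term}, again with competitor $\bsigma_I$. This changes the prefactor to $1+C_0$ and adds the oscillation term $C_{\rm P}C_{\mathrm{SP}}'h\|f_1-Q_hf_1\|_{L^p(\O)}$. The rest of the argument is identical.
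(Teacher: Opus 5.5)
Your proposal is correct and is essentially the paper's proof. It uses Theorem~\ref{thm:flux-quasibest} with the patch comparison flux of Lemma~\ref{lem:patch-interpolant} as competitor, the elementwise bound $\|\bsigma-\Pi_h^{\rm rt}\bsigma\|_{L^p(K)}\le Ch_K^{s_K}|\bsigma|_{W^{s_K,p}(K)}$ by a reference-element argument with Piola scaling (the paper cites Dupont--Scott where you invoke Bramble--Hilbert/Deny--Lions), and Theorem~\ref{thm:flux-two-term} for the variant; one bookkeeping slip is that the pull-back satisfies $|\widehat\bsigma|_{W^{s,p}(\widehat K)}\le Ch_K^{s+1-2/p}|\bsigma|_{W^{s,p}(K)}$, with exponent $s+1-2/p$ rather than $s+2/p-1$, and it is this exponent that combines with $h_K^{2/p-1}$ to give $h_K^{s}$.
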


\begin{proof}
By Theorem~\ref{thm:flux-quasibest} and
Lemma~\ref{lem:patch-interpolant},
\[
  \|\bsigma-\bsigma_h\|_{L^p(\O)}
  \le(1+C_{\mathrm{SP}})\Bigl(
  \|\bsigma-\Pi_h^{\rm rt}\bsigma\|_{L^p(\O\setminus\omega_{x_0})}
  +CC_{\bsigma}h_{x_0}^{2/p-1}+Ch_{x_0}\|f_1\|_{L^p(\omega_{x_0})}\Bigr).
\]
Every $K\in\cT_h$ not contained in $\omega_{x_0}$ lies in the closure of
a single subdomain of $\cP$, so $\bsigma|_K\in W^{s_K,p}(K)^2$ by
\eqref{eq:away-regularity}.  Since $s_Kp>1$ the normal trace of
$\bsigma$ on $\p K$ lies in $L^1(\p K)$, so $\Pi_h^{\rm rt}\bsigma|_K$
is defined, and the reference-element argument, with polynomial
approximation in $W^{s,p}$ \cite{DupontScott1980} and Piola scaling,
gives $\|\bsigma-\Pi_h^{\rm rt}\bsigma\|_{L^p(K)}
\le Ch_K^{s_K}|\bsigma|_{W^{s_K,p}(K)}$.  Summing the $p$-th powers over
these elements gives \eqref{eq:flux-mesh}.  For the two-term version,
replace Theorem~\ref{thm:flux-quasibest} by
Theorem~\ref{thm:flux-two-term}.
\end{proof}

The smoothness index in \eqref{eq:flux-mesh} is elementwise, as in
\cite{CaiHeZhang2017, Zhang:20mixed}: the interpolation estimate behind
the third term is local, so it may be applied on each element with
whatever regularity $\bsigma$ has there.  The patch terms carry no
smoothness index at all.

Estimate \eqref{eq:flux-mesh} is the form in which the mesh, graded or
not, enters the flux error.  To read a rate off it one needs to know how
$\bsigma$ behaves between the patch and the far field, which is
derivative information away from the pole:
\begin{equation}\label{eq:away-derivative}
  |\nabla_{\cP}\bsigma(x)|\le C_{\bsigma}|x-x_0|^{-2}
  \quad\text{for a.e.\ }x\in B_{\rho_0}(x_0)\setminus\{x_0\},
  \qquad
  \nabla_{\cP}\bsigma\in L^p\bigl(\O\setminus\overline{B_{\rho_0}(x_0)}\bigr) .
\end{equation}
Both of these are needed.  The first bounds the elementwise
seminorms in \eqref{eq:flux-mesh} on the annulus
$B_{\rho_0}(x_0)\setminus\omega_{x_0}$, where a finite global $W^{1,p}$
seminorm is not available: $\int_0r^{-2p}r\,dr$ diverges for every
$p\ge1$, and it is the balance of $h_K$ against $r^{-2}$ that produces
the exponent $2/p-1$.  The second is what makes the far field contribute
$O(h)$.  Both are statements about $\bsigma$ relative to $\cP$ alone and
involve no mesh.

\begin{cor}[Rate on quasi-uniform meshes]
\label{cor:global-rate}
Let $1<p<2$, let Assumption~\ref{ass:S}, the elementwise regularity
\eqref{eq:away-regularity} with $s_K=1$, the derivative bound
\eqref{eq:away-derivative} and Hypothesis~(SP) hold, and let
$\{\cT_h\}$ be a quasi-uniform, shape-regular family.  Then
\begin{equation}\label{eq:global-upper}
\|\bsigma-\bsigma_h\|_{L^p(\O)}\le Ch^{2/p-1}.
\end{equation}
\end{cor}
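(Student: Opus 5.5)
The plan is to start from Theorem~\ref{thm:flux-mesh} and bound each of its three terms by $Ch^{2/p-1}$ on a quasi-uniform family, where $h_K\simeq h$ for every $K$ and hence $h_{x_0}\simeq h$.

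The first patch term is $C_{\bsigma}h_{x_0}^{2/p-1}\le Ch^{2/p-1}$ directly. The second is $h_{x_0}\|f_1\|_{L^p(\omega_{x_0})}\le Ch\|f_1\|_{L^p(\O)}$. Since $2/p-1<1$, this is $O(h^{2/p-1})$ for $h\le1$ and so is absorbed into the first. For $h$ larger than a fixed threshold there are only finitely many meshes up to the constant, so it suffices to treat $h$ small, in particular with $\omega_{x_0}\subset B_{\rho_0/2}(x_0)$.

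The main work is the third term with $s_K=1$,
\[
  \Bigl(\sum_{K\not\subset\omega_{x_0}}h_K^{p}|\bsigma|_{W^{1,p}(K)}^{p}\Bigr)^{1/p}
  \le Ch\,\|\nabla_{\cP}\bsigma\|_{L^p(\O\setminus\omega_{x_0})},
\]
where the elementwise seminorm is the piecewise gradient because the mesh is fitted to $\cP$. I split $\O\setminus\omega_{x_0}$ into the far field $\O\setminus B_{\rho_0}(x_0)$ and the annulus $B_{\rho_0}(x_0)\setminus\omega_{x_0}$. On the far field the second part of \eqref{eq:away-derivative} gives a contribution $Ch$; elements straddling $\p B_{\rho_0}$ are covered by the pointwise bound on the part inside the ball. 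On the annulus, \eqref{eq:patch-geometry} gives $\mathrm{dist}(x_0,\p\omega_{x_0})\ge ch_{x_0}\ge ch$, so every such element lies in $\{|x-x_0|\ge ch\}$. The first part of \eqref{eq:away-derivative} then yields
\[
  \int_{ch\le|x-x_0|\le\rho_0}|\nabla_{\cP}\bsigma|^p\,dx
  \le CC_{\bsigma}^p\int_{ch}^{\rho_0}r^{1-2p}\,dr
  \le CC_{\bsigma}^p h^{2-2p},
\]
using $2p-2>0$. Multiplying by $h^p$ gives $h^{2-p}$, and the $p$-th root gives $h^{2/p-1}$.

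The only delicate point, and the one I expect to be the main obstacle, is this annulus step. It needs elements outside the patch to stay at distance $\gtrsim h$ from the pole, which is exactly what \eqref{eq:patch-geometry} and quasi-uniformity provide. It also needs \eqref{eq:away-regularity} with $s_K=1$ to hold on those elements, which is consistent with \eqref{eq:away-derivative} since $\nabla_{\cP}\bsigma$ is bounded there. Summing the three contributions and applying \eqref{eq:flux-mesh} gives \eqref{eq:global-upper}, with $C$ depending on $C_{\mathrm{SP}}$, $C_{\bsigma}$, $\|f_1\|_{L^p}$, the far-field norm, $p$, $\rho_0$, and the shape-regularity and quasi-uniformity constants.
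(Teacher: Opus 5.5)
Your proposal is correct and follows the paper's proof essentially step for step. You start from \eqref{eq:flux-mesh}, use $h_{x_0}\simeq h$ for the two patch terms, split the elementwise sum at $\p B_{\rho_0}(x_0)$, use \eqref{eq:patch-geometry} to keep the remaining elements at distance $\gtrsim h$ from the pole, and integrate the first bound of \eqref{eq:away-derivative} to obtain $h\bigl(\int_{ch}^{\rho_0}r^{1-2p}\,dr\bigr)^{1/p}\le Ch^{2/p-1}$, with the far field contributing $O(h)$. Your aside about large $h$ (``finitely many meshes'') is loosely phrased, but the paper likewise works only in the small-$h$ regime where $\omega_{x_0}\subset B_{\rho_0}(x_0)$, so this does not separate your argument from it.
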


\begin{proof}
We bound the three terms of \eqref{eq:flux-mesh}.  On a quasi-uniform
family $h_{x_0}\simeq h$, so the first is $O(h^{2/p-1})$ and the second
is $O(h)$.  For the third, split the elements not contained in
$\omega_{x_0}$ at $\p B_{\rho_0}(x_0)$.  Those meeting
$B_{\rho_0}(x_0)$ are at distance $\gtrsim h$ from $x_0$ by
\eqref{eq:patch-geometry}, so the first bound in
\eqref{eq:away-derivative} gives
\[
\Bigl(\sum h_K^{p}|\bsigma|_{W^{1,p}(K)}^{p}\Bigr)^{1/p}
\le Ch\Bigl(\int_{ch}^{\rho_0}r^{-2p}r\,dr\Bigr)^{1/p}
\le Ch^{2/p-1},
\]
and for the remaining elements the second bound gives an $O(h)$
contribution.  Using $2/p-1<1$ proves \eqref{eq:global-upper}.
\end{proof}

The exponent cannot be improved when the leading term does not vanish.
On a quasi-uniform family to which Corollary~\ref{cor:global-lower}
applies, and with $\bq$ not identically zero in
\eqref{eq:leading-singular-part}, no field of $\RT_0(\cT_h)$ does
better on the pole element; there $h_{K^{x_0}}\simeq h$, so the two bounds
meet, $\|\bsigma-\bsigma_h\|_{L^p(\O)}\simeq h^{2/p-1}$, and the rate is
$N^{-(1/p-1/2)}$ since $N\simeq h^{-2}$.  If $\bq\equiv0$ nothing here
forbids a faster rate (in the problem of
Remark~\ref{rem:num-matched} the flux error vanishes identically),
while \eqref{eq:global-upper} holds in either case.

\subsection{Local form and equidistribution}
\label{subsec:equidistribution}

Estimate \eqref{eq:flux-mesh} is local in the sense used for interface
problems in \cite{CaiHeZhang2017, Zhang:20mixed}: apart from the two
terms attached to the patch, each contribution involves only $h_K$ and
$\bsigma$ on the single element $K$, so that the bound is a sum of local
indicators and the question of which mesh makes them equal can be asked.
That is what an adaptive method needs, since it tells one both what the
grading the local bound suggests and the complexity such a grading
delivers.
The other half of those papers, robustness of the constants in the
coefficient, is not pursued here.

Write the terms of \eqref{eq:flux-mesh} as local indicators,
\begin{equation}\label{eq:local-indicators}
  \zeta_{x_0}:=C_{\bsigma}h_{x_0}^{2/p-1}
   +h_{x_0}\|f_1\|_{L^p(\omega_{x_0})},
  \qquad
  \zeta_K:=h_K^{s_K}\,|\bsigma|_{W^{s_K,p}(K)} ,
\end{equation}
so that \eqref{eq:flux-mesh} reads
$\|\bsigma-\bsigma_h\|_{L^p(\O)}\le C(1+C_{\mathrm{SP}})
\bigl(\zeta_{x_0}
 +(\sum_{K\not\subset\omega_{x_0}}\zeta_K^{p})^{1/p}\bigr)$.
The patch indicator carries no smoothness index at all: there is no
exponent $s$ and no seminorm of $\bsigma$ in $\zeta_{x_0}$, only
$C_{\bsigma}$ and the size of the patch, and under the nonvanishing
hypothesis of Corollary~\ref{cor:global-lower} its first term is not
merely an upper bound but the exact order of the local best
approximation.  On that
element a mesh can do one thing only, namely shrink it; everywhere else
it acts through the regularity of $\bsigma$.  An adaptive algorithm is
doing both at once, and \eqref{eq:flux-mesh} is what separates them.

Equidistribution of the dominant contributions to
\eqref{eq:local-indicators}, the singular patch term and the
elementwise terms $\zeta_K$, the source term on the patch turning out to
be of higher order, then determines the grading and the rate it
delivers.
They are not computable, being built from $\bsigma$ itself.

For the model singularity the answer is explicit, and it restores the
first-order complexity that Corollary~\ref{cor:global-rate} loses.

\begin{prop}[Graded balancing and first-order complexity]
\label{prop:equidistribution}
Let $1<p<2$, let the hypotheses of Theorem~\ref{thm:flux-mesh} with
$s_K=1$ and the derivative bound \eqref{eq:away-derivative} hold, and set
\[
  \mu:=\frac{2p}{p+2}\in(0,1).
\]
Let $\{\cT_c\}_{c>0}$ be fitted, shape-regular families graded so that
\begin{equation}\label{eq:grading}
  h_K\simeq c\,\bigl(\max\{|x_K-x_0|,h_{x_0}\}\bigr)^{\mu}
  \quad\text{for } K\subset B_{\rho_0}(x_0),
  \qquad
  h_K\simeq c\ \text{ otherwise},
\end{equation}
with $x_K$ the barycenter of $K$ and $h_{x_0}\simeq c^{(p+2)/(2-p)}$.
Then the patch approximation term of \eqref{eq:flux-mesh} and the bounds
on its elementwise terms all sit at the common scale $Cc^{1+2/p}$, the
source term on the patch is of higher order, the number of elements
satisfies $N\simeq c^{-2}$, and
\begin{equation}\label{eq:graded-rate}
  \|\bsigma-\bsigma_h\|_{L^p(\O)}\le CN^{-1/2}.
\end{equation}
\end{prop}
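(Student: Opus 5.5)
Everything follows from Theorem~\ref{thm:flux-mesh} with $s_K=1$: it suffices to bound the three terms of \eqref{eq:flux-mesh} under the grading \eqref{eq:grading} and then to count elements. No new approximation argument is needed; the work is bookkeeping with dyadic annuli.

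\textbf{Patch term.} With $h_{x_0}\simeq c^{(p+2)/(2-p)}$ we get
\[
C_{\bsigma}h_{x_0}^{2/p-1}\simeq c^{\frac{p+2}{2-p}\cdot\frac{2-p}{p}}=c^{(p+2)/p}=c^{1+2/p}.
\]
The source term $h_{x_0}\|f_1\|_{L^p(\omega_{x_0})}$ is $O(c^{(p+2)/(2-p)})$. Since $(p+2)/(2-p)>(p+2)/p$ for $p>1$, it is of higher order.

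\textbf{Elementwise terms.} For $K\not\subset\omega_{x_0}$ with $K\subset B_{\rho_0}(x_0)$, set $r=|x_K-x_0|\gtrsim h_{x_0}$. Shape regularity and \eqref{eq:patch-geometry} give $|x-x_0|\simeq r$ on $K$. The bound \eqref{eq:away-derivative} then gives
\[
\zeta_K^p\lesssim h_K^{p}\,r^{-2p}|K|\simeq h_K^{p+2}r^{-2p}\simeq c^{p+2}r^{\mu(p+2)-2p}.
\]
By the choice of $\mu$, $\mu(p+2)=2p$, so every such $\zeta_K^p\simeq c^{p+2}$. This is the claimed common scale: $\zeta_K\simeq c^{1+2/p}$, matching the patch term.

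To sum these, I count the elements. On the dyadic annulus $r\in[2^{j},2^{j+1})$ there are about $r^2/h_K^2\simeq c^{-2}r^{2-2\mu}$ elements, so
\[
\sum_{K\not\subset\omega_{x_0}}\zeta_K^p\lesssim c^{p+2}\,\#\{K\}.
\]
Far-field elements contribute $\sum h_K^p|\bsigma|^p_{W^{1,p}(K)}\lesssim c^p$ by the second part of \eqref{eq:away-derivative}.

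\textbf{Counting $N$.} I compute
\[
N\simeq\int_{B_{\rho_0}}h(x)^{-2}\,dx+c^{-2}\simeq c^{-2}\Bigl(\int_{h_{x_0}}^{\rho_0}r^{1-2\mu}\,dr+1\Bigr).
\]
Since $\mu<1$, the exponent $1-2\mu>-1$, so the integral is bounded and $N\simeq c^{-2}$. The patch itself contributes $O(1)$ elements, and the grading is consistent at $r\simeq h_{x_0}$ because $c\,h_{x_0}^{\mu}\simeq h_{x_0}$. This last claim is exactly the equation $h_{x_0}^{1-\mu}\simeq c$, and indeed $1-\mu=(2-p)/(p+2)$.

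\textbf{Assembling.} Combining the pieces,
\[
\Bigl(\sum\zeta_K^p\Bigr)^{1/p}\lesssim c^{(p+2)/p}\,N^{1/p}+c\simeq c^{1+2/p-2/p}+c=Cc.
\]
Hence the total is $\lesssim c^{1+2/p}+c+c^{(p+2)/(2-p)}\lesssim c\simeq N^{-1/2}$.

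\textbf{Main obstacle.} The delicate step is reconciling ``all at the common scale $c^{1+2/p}$'' with the final rate. Each individual indicator is $c^{1+2/p}$, but their $\ell^p$ sum over roughly $c^{-2}$ elements is $c$, and that sum is what yields $N^{-1/2}$. I will state the proposition's scale claim per indicator and derive \eqref{eq:graded-rate} from the summed form. Two further points need care: checking that elements straddling $\partial B_{\rho_0}$ and the grading transition obey shape-regular comparability (bounded mesh-size ratio of neighbours), and confirming $\omega_{x_0}\subset B_{\rho_0}(x_0)$ for small $c$, so that Theorem~\ref{thm:flux-mesh} applies.
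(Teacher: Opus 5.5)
Your proposal is correct and follows the paper's proof essentially step for step: the per-element bound $h_K^{p+2}r_K^{-2p}\lesssim c^{p+2}$ via $\mu(p+2)=2p$, the pole term $h_{x_0}^{2-p}\simeq c^{p+2}$, the count $N\simeq c^{-2}\bigl(\int_{h_{x_0}}^{\rho_0}r^{1-2\mu}\,dr+|\O|\bigr)$, and the summed contribution $CNc^{p+2}\le Cc^p$ plus the far-field $c^p$ together give $\|\bsigma-\bsigma_h\|_{L^p(\O)}\le Cc\simeq CN^{-1/2}$. The only slip is writing $\zeta_K^p\simeq c^{p+2}$, where \eqref{eq:away-derivative} gives only $\lesssim$; that upper bound is all the proposition claims.
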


\begin{proof}
Let $K\subset B_{\rho_0}(x_0)$ with $K\not\subset\omega_{x_0}$ and write
$r_K:=|x_K-x_0|$, so that $r_K\gtrsim h_{x_0}$ and, by
\eqref{eq:away-derivative}, $\int_K|\nabla_{\cP}\bsigma|^p\,dx\le
CC_{\bsigma}^ph_K^2r_K^{-2p}$.  With \eqref{eq:grading},
\[
  \bigl(h_K|\bsigma|_{W^{1,p}(K)}\bigr)^{p}
  \le CC_{\bsigma}^p\,h_K^{p+2}r_K^{-2p}
  =CC_{\bsigma}^p\,c^{p+2},
\]
since $\mu(p+2)=2p$: the choice of $\mu$ is exactly the one that makes
this independent of $r_K$.  The pole term of \eqref{eq:flux-mesh}
contributes $\bigl(C_{\bsigma}h_{x_0}^{2/p-1}\bigr)^{p}
=C_{\bsigma}^ph_{x_0}^{2-p}=C_{\bsigma}^pc^{p+2}$, the same amount, which
is what fixes $h_{x_0}$.  Counting elements,
\[
  N\simeq\frac1{c^{2}}
   \Bigl(\int_{h_{x_0}}^{\rho_0}r^{1-2\mu}\,dr+|\O|\Bigr)
  \simeq c^{-2},
\]
the integral being bounded uniformly in $c$ because $\mu<1$.  Hence the
elements of $B_{\rho_0}(x_0)$ contribute at most $CNc^{p+2}\le Cc^{p}$
to the $p$-th power of the third term of \eqref{eq:flux-mesh}, while
outside $B_{\rho_0}(x_0)$ the second bound in
\eqref{eq:away-derivative} gives at most
$c^{p}\|\nabla_{\cP}\bsigma\|^p_{L^p(\O\setminus\overline{B_{\rho_0}(x_0)})}$.
Finally $h_{x_0}\|f_1\|_{L^p(\omega_{x_0})}\le Cc^{(p+2)/(2-p)}$ and
$C_{\bsigma}h_{x_0}^{2/p-1}=C_{\bsigma}c^{(p+2)/p}$ are both $o(c)$.
Collecting the terms gives
$\|\bsigma-\bsigma_h\|_{L^p(\O)}\le Cc$, and $c\simeq N^{-1/2}$.
\end{proof}

The rate \eqref{eq:graded-rate} is the first-order complexity of a
lowest-order method in two dimensions, and the graded family reaches it
for every $p\in(1,2)$; on a quasi-uniform family Corollary~\ref{cor:global-rate}
gives only $N^{-(1/p-1/2)}$, which is $N^{-1/3}$ at $p=1.2$.  The
grading exponent $\mu=2p/(p+2)$ tends to $1$ as $p\uparrow2$ and to
$2/3$ as $p\downarrow1$.  The elements of the pole patch are not graded
but resolved at the single scale $h_{x_0}\simeq c^{(p+2)/(2-p)}$, fixed
by balancing the patch contribution against the elementwise scale, which
is the condition for the term that carries no regularity at all.

\section{Estimates for the scalar variable}
\label{sec:scalar}

The estimates of Sections~\ref{sec:discrete} and \ref{sec:approx} concern
the flux alone.  We now transfer them to the potential.  Two routes are
given: an unconditional one, which converts the $L^p$-flux error directly
and needs no elliptic regularity, and a duality argument which gains the
factor $h^{2/p'}$ under a regularity hypothesis on the dual problem.  Both
rest on the behavior of the Raviart--Thomas interpolant and of the
divergence in the exponent $p'$ conjugate to that of the flux error, and
the section closes with the resulting rate for $u-u_h$.

One identity organizes all of it.  Both $Q_hu$ and $u_h$ lie in
$P_0(\cT_h)$ and $u-Q_hu$ is orthogonal to that space, so that
\begin{equation}\label{eq:pythagoras}
  \|u-u_h\|_{L^2(\O)}^2
  =\inf_{v_h\in P_0(\cT_h)}\|u-v_h\|_{L^2(\O)}^2
   +\|Q_hu-u_h\|_{L^2(\O)}^2 .
\end{equation}
The first term is the approximation error of the scalar space and has
nothing to do with the mixed method; the second is the error the method
itself makes.  Everything below acts on the second term only.  In
particular no estimate of it can bring $\|u-u_h\|_{L^2(\O)}$ below the
best approximation, so the duality argument of
Section~\ref{subsec:scalar-refined} does not improve an order: what it
does is to show that the second term is negligible against the first.
That is not automatic.  Theorem~\ref{thm:scalar-basic} passes to the
second term whatever rate is available for the flux, and on the
quasi-uniform meshes of Corollary~\ref{cor:global-rate} that is only
$\|Q_hu-u_h\|_{L^2(\O)}=O(h^{2/p-1})$, larger than the best
approximation in the singular regime analyzed below.  Bringing the
discrete component below the approximation scale is what the duality
argument is for.

\subsection{A discrete \texorpdfstring{$L^{p'}$}{Lp'} inf--sup condition
and the basic scalar estimate}
\label{subsec:scalar-basic}

Let $H^1(\cT_h):=\{\bv\in L^2(\O)^2:\bv|_K\in H^1(K)^2\ \forall K\in\cT_h\}$ and $|\bv|_{H^1(\cT_h)}^2:=\sum_K|\bv|_{H^1(K)}^2$. We shall use the following standard Raviart--Thomas interpolation estimate. On the reference element the edge moments are controlled by the trace theorem, $|\int_{\widehat e}\widehat\bv\cdot\widehat\bn\,d\widehat s|\le\|\widehat\bv\|_{L^1(\widehat e)}\le C\|\widehat\bv\|_{H^1(\widehat K)}$, so that $\widehat\Pi$ is bounded from $H^1(\widehat K)^2$ into $L^q(\widehat K)^2$ for every $q<\infty$.

Boundedness alone does not give an error estimate. Since $\widehat\Pi$ reproduces constant fields, $\widehat\bv-\widehat\Pi\widehat\bv=(\widehat\bv-\bc)-\widehat\Pi(\widehat\bv-\bc)$ for every constant $\bc$, so that boundedness together with the embedding $H^1(\widehat K)\hookrightarrow L^q(\widehat K)$ gives $\|\widehat\bv-\widehat\Pi\widehat\bv\|_{L^q(\widehat K)}\le C_q\inf_{\bc}\|\widehat\bv-\bc\|_{H^1(\widehat K)}\le C_q|\widehat\bv|_{H^1(\widehat K)}$, the last step by the Bramble--Hilbert lemma.

Piola scaling then gives, for $2\le q<\infty$ and $\bv\in H(\divvr;\O)\cap H^1(\cT_h)$, $\|\bv-\Pi_h^{\rm rt}\bv\|_{L^q(K)}\le C_qh_K^{2/q}|\bv|_{H^1(K)}$ on each $K\in\cT_h$. Summing the $q$-th powers, using $h_K\le h$ and $\ell^2\hookrightarrow\ell^q$ for $q\ge2$, gives
\begin{equation}\label{eq:Lq-interp}
\|\bv-\Pi_h^{\rm rt}\bv\|_{L^q(\O)}\le C_qh^{2/q}|\bv|_{H^1(\cT_h)},\qquad 2\le q<\infty.
\end{equation}

The following discrete lifting is the key ingredient for estimating the scalar variable in terms of the $L^p$-error of the flux.
\begin{lem}[Discrete $L^{p'}$ lifting of the divergence]
\label{lem:discrete-Lp-lifting}
Let $1<p<2$. For every $v_h\in P_0(\cT_h)$ there exists $\btau_h\in\RT_0(\cT_h)$ such that
\begin{equation}\label{eq:discrete-Lp-lifting}
\gradt\btau_h=v_h,\qquad
\|\btau_h\|_{L^{p'}(\O)}+\|\gradt\btau_h\|_{L^2(\O)}
\le C_{\mathrm L}\|v_h\|_{L^2(\O)},
\end{equation}
where $C_{\mathrm L}$ is independent of $h$.
\end{lem}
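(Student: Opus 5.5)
The lifting will be built by solving a continuous right-inverse problem for the divergence and applying the canonical interpolant $\Pi_h^{\rm rt}$, as the introduction announces. Given $v_h\in P_0(\cT_h)\subset L^2(\O)$, I will choose a field $\bv\in H^1(\O)^2$ with
\[
\gradt\bv=v_h\ \text{ in }\O,\qquad \|\bv\|_{H^1(\O)}\le C\|v_h\|_{L^2(\O)} .
\]
Since no boundary condition on $\bv$ is needed (the flux space carries no essential condition), the simplest route is to extend $v_h$ by zero to a ball $B\supset\O$, with $\bar v$ its extension. Then I let $z\in H^2(B)\cap H^1_0(B)$ solve $\Delta z=\bar v$; on a ball (convex) $H^2$ regularity holds with $\|z\|_{H^2(B)}\le C\|v_h\|_{L^2(\O)}$. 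Finally I set $\bv:=\nabla z|_\O$. Alternatively, Bogovski\u{\i}'s operator on a Lipschitz domain would work after subtracting the mean, but the ball extension avoids any compatibility condition and any dependence on the regularity of $\O$.

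Next I set $\btau_h:=\Pi_h^{\rm rt}\bv$. Since $\bv\in H^1(\O)^2$, its edge moments are defined by the trace argument recalled just before \eqref{eq:Lq-interp}, so $\btau_h\in\RT_0(\cT_h)$ is well defined. The commuting property then gives $\gradt\btau_h=Q_h(\gradt\bv)=Q_hv_h=v_h$, so $\|\gradt\btau_h\|_{L^2(\O)}=\|v_h\|_{L^2(\O)}$.

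For the $L^{p'}$ bound, $p'\in(2,\infty)$, I write
\[
\|\btau_h\|_{L^{p'}(\O)}\le\|\bv\|_{L^{p'}(\O)}+\|\bv-\Pi_h^{\rm rt}\bv\|_{L^{p'}(\O)} .
\]
The Sobolev embedding $H^1(\O)\hookrightarrow L^{p'}(\O)$ in two dimensions bounds the first term by $C\|\bv\|_{H^1(\O)}$. Estimate \eqref{eq:Lq-interp} with $q=p'$ bounds the second term by $C_{p'}h^{2/p'}|\bv|_{H^1(\O)}\le C|\bv|_{H^1(\O)}$, using $h\le\diam\O$. Combining these gives \eqref{eq:discrete-Lp-lifting} with $C_{\mathrm L}$ depending only on $p$, shape regularity and $\O$.

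The main point to check is that $\bv\in H(\divvr;\O)\cap H^1(\cT_h)$ with a genuine global $H^1$ bound, so that \eqref{eq:Lq-interp} applies. The ball extension secures this even though $\O$ is only Lipschitz, where $H^2$ regularity on $\O$ itself could fail. No inverse inequality enters, so there is no loss of powers of $h$.
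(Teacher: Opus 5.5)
Your proof is correct and follows the paper's argument: an $H^1$-bounded continuous right inverse of the divergence, then the canonical interpolant $\Pi_h^{\rm rt}$ with the commuting property, the embedding $H^1(\O)\hookrightarrow L^{p'}(\O)$, and \eqref{eq:Lq-interp} at $q=p'$. The only difference is how the right inverse is obtained. The paper uses a bounded operator $L^2_0(\O)\to H^1_0(\O)^2$ and splits off the mean of $v_h$ with the affine field $\tfrac12\bar v_h(\bx-x_c)$, whereas your $\nabla z$ from the Dirichlet Laplacian on a ball needs neither the mean-zero condition nor anything about $\O$ beyond boundedness; this is harmless here because $\RT_0(\cT_h)$ imposes no boundary condition.
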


\begin{proof}
By \cite[Thm.~1.4.5]{BernardiEtAl2024} there is a bounded
operator $\cR_0:L^2_0(\O)\to H^1_0(\O)^2$ with $\gradt\cR_0v=v$ and
$|\cR_0v|_{H^1(\O)}\le C\|v\|_{L^2(\O)}$, the full norm being
controlled by the seminorm through Poincar\'e's inequality.  The mean
of $v_h$ need not vanish, so we split it off: let
$\bar v_h:=|\O|^{-1}\int_\O v_h\,dx$, fix any $x_c\in\Rone^2$, and set
$\cR v_h:=\cR_0(v_h-\bar v_h)+\tfrac12\bar v_h(\bx-x_c)$.
In two dimensions $\gradt\bigl(\tfrac12(\bx-x_c)\bigr)=1$, so
$\gradt\cR v_h=v_h$.
Since $\bar v_h$ is the $L^2$-projection of $v_h$
onto the constants, $\|v_h-\bar v_h\|_{L^2(\O)}\le\|v_h\|_{L^2(\O)}$
and $|\bar v_h|\le|\O|^{-1/2}\|v_h\|_{L^2(\O)}$, whence
$\|\cR v_h\|_{H^1(\O)}\le C\|v_h\|_{L^2(\O)}$ with $C$ depending only on
$\O$.

Set $\btau_h:=\Pi_h^{\rm rt}\cR v_h$.  The interpolant is defined
because $\cR v_h\in H^1(\O)^2$, so that its normal trace on each edge is
integrable by the trace theorem, and the commuting property of
Section~\ref{subsec:discrete-problem} gives
$\gradt\btau_h=Q_h(\gradt\cR v_h)=Q_hv_h=v_h$, the last equality because $v_h\in P_0(\cT_h)$.
Since $\gradt\cR v_h=v_h\in L^2(\O)$, the field $\cR v_h$ belongs to $H(\divvr;\O)\cap H^1(\cT_h)$, so \eqref{eq:Lq-interp} applies with $q=p'>2$; together with the triangle inequality and the two-dimensional embedding $H^1(\O)\hookrightarrow L^{p'}(\O)$ it gives
\[
  \|\btau_h\|_{L^{p'}(\O)}
  \le\|\cR v_h\|_{L^{p'}(\O)}
   +\|\cR v_h-\Pi_h^{\rm rt}\cR v_h\|_{L^{p'}(\O)}
  \le\bigl(C+C_{p'}h^{2/p'}\bigr)\|\cR v_h\|_{H^1(\O)} ,
\]
and $h\le\operatorname{diam}\O$ bounds the bracket independently of the
mesh.  Hence $\|\btau_h\|_{L^{p'}(\O)}\le C_{\mathrm L}\|v_h\|_{L^2(\O)}$,
and together with $\|\gradt\btau_h\|_{L^2(\O)}=\|v_h\|_{L^2(\O)}$ this
proves \eqref{eq:discrete-Lp-lifting}.
\end{proof}

\begin{cor}[Discrete $L^{p'}$ divergence inf--sup condition]
\label{cor:discrete-Lp-infsup}
There exists $\beta_p>0$, independent of $h$, such that
\begin{equation}\label{eq:discrete-Lp-infsup}
\sup_{0\neq\btau_h\in\RT_0(\cT_h)}
\frac{(\gradt\btau_h,v_h)}
{\|\btau_h\|_{L^{p'}(\O)}+\|\gradt\btau_h\|_{L^2(\O)}}
\ge \beta_p\|v_h\|_{L^2(\O)}
\qquad\forall\,v_h\in P_0(\cT_h).
\end{equation}
\end{cor}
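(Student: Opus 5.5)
The corollary follows from Lemma~\ref{lem:discrete-Lp-lifting} by testing with the lifted field itself. Fix $v_h\in P_0(\cT_h)$. If $v_h=0$ there is nothing to prove, since the supremum is nonnegative (take any $\btau_h\ne0$ with $\gradt\btau_h=0$, or note that the bound $\ge0$ is trivial). Otherwise let $\btau_h\in\RT_0(\cT_h)$ be the field given by Lemma~\ref{lem:discrete-Lp-lifting}. Then $\gradt\btau_h=v_h$, and in particular $\btau_h\ne0$.

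For this particular $\btau_h$ the numerator is $(\gradt\btau_h,v_h)=\|v_h\|_{L^2(\O)}^2$. By \eqref{eq:discrete-Lp-lifting} the denominator satisfies
\[
\|\btau_h\|_{L^{p'}(\O)}+\|\gradt\btau_h\|_{L^2(\O)}\le C_{\mathrm L}\|v_h\|_{L^2(\O)} .
\]
The supremum is at least the quotient at this $\btau_h$, so it is at least $\|v_h\|_{L^2(\O)}^2/(C_{\mathrm L}\|v_h\|_{L^2(\O)})=C_{\mathrm L}^{-1}\|v_h\|_{L^2(\O)}$. This gives \eqref{eq:discrete-Lp-infsup} with $\beta_p:=C_{\mathrm L}^{-1}$. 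This constant is independent of $h$ because $C_{\mathrm L}$ is, and it depends on $p$ only through the interpolation constant $C_{p'}$ in \eqref{eq:Lq-interp} and through the embedding $H^1(\O)\hookrightarrow L^{p'}(\O)$.

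There is no real obstacle: all the analytic content, namely the right inverse of the divergence, the $L^{p'}$ stability of $\Pi_h^{\rm rt}$ on $H^1$ fields, and the commuting property, sits in the lemma. The only points to check are that the denominator does not vanish, which holds because $\gradt\btau_h=v_h\neq0$, and that the sign of the numerator is right, which the choice $\gradt\btau_h=v_h$ guarantees.
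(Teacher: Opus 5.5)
Your proof is correct and is exactly the paper's argument: you test the supremum with the lifting $\btau_h$ of Lemma~\ref{lem:discrete-Lp-lifting}, so that the numerator equals $\|v_h\|_{L^2(\O)}^2$ and the denominator is at most $C_{\mathrm L}\|v_h\|_{L^2(\O)}$, which gives $\beta_p=C_{\mathrm L}^{-1}$. Your extra remarks on the case $v_h=0$ and on the denominator being nonzero are harmless bookkeeping that the paper leaves implicit.
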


\begin{proof}
For $v_h\neq0$, choose the lifting of Lemma~\ref{lem:discrete-Lp-lifting}. Then the numerator is $\|v_h\|_{L^2(\O)}^2$, whereas the denominator is bounded by $C_{\mathrm L}\|v_h\|_{L^2(\O)}$. Hence \eqref{eq:discrete-Lp-infsup} holds with $\beta_p=C_{\mathrm L}^{-1}$.
\end{proof}

\begin{thm}[Basic $L^2$-estimate for the scalar variable]
\label{thm:scalar-basic}
Let $1<p<2$. Then
\begin{equation}\label{eq:scalar-basic-projected}
\|Q_hu-u_h\|_{L^2(\O)}
\le\frac{1}{\alpha\beta_p}\|\bsigma-\bsigma_h\|_{L^p(\O)},
\end{equation}
and hence, by \eqref{eq:pythagoras},
\begin{equation}\label{eq:scalar-basic}
\|u-u_h\|_{L^2(\O)}
\le \inf_{v_h\in P_0(\cT_h)}\|u-v_h\|_{L^2(\O)}
+\frac{1}{\alpha\beta_p}\|\bsigma-\bsigma_h\|_{L^p(\O)}.
\end{equation}
In particular, no elliptic regularity or inverse inequality is required.

\end{thm}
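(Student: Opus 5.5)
We apply the discrete inf--sup condition of Corollary~\ref{cor:discrete-Lp-infsup} to $v_h:=Q_hu-u_h\in P_0(\cT_h)$ and use the first error equation in \eqref{eq:mixed-error-equations} to move the action of the divergence onto the flux error.

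\emph{Step 1: replace $u$ by $Q_hu$ in the error equation.} For every $\btau_h\in\RT_0(\cT_h)$ the divergence $\gradt\btau_h$ is elementwise constant. Hence $(\gradt\btau_h,u-Q_hu)=0$, since $Q_h$ preserves elementwise means. This holds because $u\in W^{1,p}(\O)\subset L^2(\O)$ in two dimensions ($W^{1,p}\hookrightarrow L^{2p/(2-p)}$ with $2p/(2-p)>2$ for $p>1$), so $Q_hu$ is the $L^2$ projection. The first identity of \eqref{eq:mixed-error-equations} then reads
\[
(\gradt\btau_h,Q_hu-u_h)=(A^{-1}(\bsigma-\bsigma_h),\btau_h)\qquad\forall\,\btau_h\in\RT_0(\cT_h).
\]

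\emph{Step 2: H\"older.} The pairing on the right is an $L^p$--$L^{p'}$ duality, and it is finite because $\bsigma_h$ and $\btau_h$ are bounded. The eigenvalues of $A^{-1}$ are at most $\alpha^{-1}$ by \eqref{eq:ellipticity}, so
\[
|(A^{-1}(\bsigma-\bsigma_h),\btau_h)|\le\alpha^{-1}\|\bsigma-\bsigma_h\|_{L^p(\O)}\|\btau_h\|_{L^{p'}(\O)}\le\alpha^{-1}\|\bsigma-\bsigma_h\|_{L^p(\O)}\bigl(\|\btau_h\|_{L^{p'}(\O)}+\|\gradt\btau_h\|_{L^2(\O)}\bigr).
\]
Dividing by the graph norm, taking the supremum over $\btau_h\neq0$ and invoking \eqref{eq:discrete-Lp-infsup} with $v_h=Q_hu-u_h$ gives
\[
\beta_p\|Q_hu-u_h\|_{L^2(\O)}\le\alpha^{-1}\|\bsigma-\bsigma_h\|_{L^p(\O)},
\]
which is \eqref{eq:scalar-basic-projected}.

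\emph{Step 3: the full error.} By \eqref{eq:pythagoras}, $\|u-u_h\|_{L^2(\O)}\le(a^2+b^2)^{1/2}\le a+b$, where $a$ is the best $P_0$ approximation error and $b=\|Q_hu-u_h\|_{L^2(\O)}$. This yields \eqref{eq:scalar-basic}.

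There is no genuine obstacle here. The only points needing care are the orthogonality step, which requires $u\in L^2(\O)$ (secured by the Sobolev embedding above), and the remark that the $\|\gradt\btau_h\|_{L^2(\O)}$ part of the denominator only helps, so no inverse inequality is ever used.
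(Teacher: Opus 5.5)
Your proposal is correct and follows the paper's proof essentially step for step. The paper uses the same ingredients in the same order: the orthogonality $(\gradt\btau_h,u-Q_hu)=0$ justified by $W^{1,p}(\O)\hookrightarrow L^{2p/(2-p)}(\O)\subset L^2(\O)$, the first error equation, H\"older's inequality with the bound $\alpha^{-1}$ on $A^{-1}$, the $L^{p'}$ inf--sup condition of Corollary~\ref{cor:discrete-Lp-infsup}, and finally \eqref{eq:pythagoras} with $(a^2+b^2)^{1/2}\le a+b$. One small wording slip: the pairing $(A^{-1}(\bsigma-\bsigma_h),\btau_h)$ is finite because $\bsigma\in L^p(\O)^2$ and $\btau_h$ is bounded, not because $\bsigma_h$ is; H\"older's inequality covers this in any case.
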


\begin{proof}
Set $e_h:=Q_hu-u_h\in P_0(\cT_h)$. Although $u\notin H^1(\O)$, we do have
$u\in L^2(\O)$: in two dimensions $W^{1,p}(\O)\hookrightarrow
L^{2p/(2-p)}(\O)$ and $2p/(2-p)>2$ for every $p>1$, so $Q_hu$ is the
$L^2$-orthogonal projection of $u$ and $(\gradt\btau_h,u-Q_hu)=0$.

Since $\gradt\btau_h\in P_0(\cT_h)$ for every $\btau_h\in\RT_0(\cT_h)$, the definition of $Q_h$ and the first error equation give
$(\gradt\btau_h,e_h)=(\gradt\btau_h,u-u_h)=(A^{-1}(\bsigma-\bsigma_h),\btau_h)$.
Hence, by \eqref{eq:discrete-Lp-infsup} and H\"older's inequality,
\[
\|e_h\|_{L^2(\O)}
\le \frac{1}{\beta_p}
\sup_{0\neq\btau_h\in\RT_0(\cT_h)}
\frac{|(A^{-1}(\bsigma-\bsigma_h),\btau_h)|}
{\|\btau_h\|_{L^{p'}(\O)}+\|\gradt\btau_h\|_{L^2(\O)}}
\le \frac{1}{\alpha\beta_p}\|\bsigma-\bsigma_h\|_{L^p(\O)}.
\]
This is \eqref{eq:scalar-basic-projected}, and with
\eqref{eq:pythagoras} it gives \eqref{eq:scalar-basic}.
\end{proof}

\begin{remark}[The role of the discrete $L^{p'}$ inf--sup condition]
\label{rem:why-inverse-fails}
Combining the standard $H(\divvr)$ inf--sup condition with the inverse
estimate
\[
  \|\btau_h\|_{L^{p'}(\O)}\lesssim h^{-(2/p-1)}\|\btau_h\|_{L^2(\O)}
\]
would introduce the factor $h^{-(2/p-1)}$ in
\eqref{eq:scalar-basic}.  Since the flux error is of order
$h^{2/p-1}$, this would yield no convergence for the scalar variable.
The inf--sup condition \eqref{eq:discrete-Lp-infsup} avoids this
artificial loss by constructing the divergence lifting directly in the
natural $L^{p'}$ norm.
\end{remark}

\subsection{A refined scalar estimate under dual regularity}
\label{subsec:scalar-refined}

Theorem~\ref{thm:scalar-basic} requires no dual regularity but transfers
the $L^p$-flux error directly to the scalar variable.  A duality
argument in the spirit of Douglas and Roberts \cite{DouglasRoberts1985}
sharpens it.  By \eqref{eq:pythagoras} what such an argument can do is
to make the second term negligible against the first, and that is what
the argument below establishes.

\begin{assumption}[Dual regularity]
\label{ass:dual}
For every $\varphi\in L^2(\O)$, the solution $\psi\in H^1_0(\O)$ of $-\gradt(A\nabla\psi)=\varphi$ satisfies $A\nabla\psi\in H^1(\cT_h)$ and
$|A\nabla\psi|_{H^1(\cT_h)}\le C_{\rm dual}\|\varphi\|_{L^2(\O)}$, where
$C_{\rm dual}$ is independent of $h$.
\end{assumption}

The membership $\nabla\psi\in L^{p'}(\O)^2$ that the Green formula
needs follows from the assumption as it stands, as the proof of
Theorem~\ref{thm:scalar-refined} shows.

\begin{remark}[Scope of Assumption~\ref{ass:dual}]
\label{rem:dual-scope}
For $A=I$ the assumption is the usual $H^2$ regularity of the dual
problem, for which convexity of $\O$ is sufficient
\cite[Thm.~1.3.14]{BernardiEtAl2024}.  For a piecewise smooth
coefficient it becomes a piecewise $H^2$ regularity relative to the
coefficient interfaces, the mesh being again fitted to $\cP$.  It does
not involve $p$, and it is the only place where any regularity of the
dual problem is used.

On a nonconvex polygon the full $H^2$ bound is not available for general
$L^2$ data, a reentrant corner of opening $\omega$ carrying the singular
exponent $\pi/\omega<1$.
\end{remark}
\begin{thm}[Supercloseness of the projected potential]
\label{thm:scalar-refined}
Let $1<p<2$, assume $f_1\in L^2(\O)$, and let Assumption~\ref{ass:dual} hold. Then
\begin{equation}\label{eq:scalar-refined}
\|Q_hu-u_h\|_{L^2(\O)}
\le C\Bigl(h^{2/p'}\|\bsigma-\bsigma_h\|_{L^p(\O)}
+h\|f_1-Q_hf_1\|_{L^2(\O)}\Bigr),
\end{equation}
where $C$ is independent of $h$; with \eqref{eq:pythagoras} this bounds
$\|u-u_h\|_{L^2(\O)}$.
\end{thm}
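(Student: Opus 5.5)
Set $e_h:=Q_hu-u_h$ and run a Douglas--Roberts duality argument.

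\emph{Dual problem and representation of $\|e_h\|^2$.} Let $\psi\in H^1_0(\O)$ solve $-\gradt(A\nabla\psi)=e_h$ and put $\bw:=A\nabla\psi$, so $\gradt\bw=-e_h$. By Assumption~\ref{ass:dual}, $\bw\in H^1(\cT_h)$ with $|\bw|_{H^1(\cT_h)}\le C_{\rm dual}\|e_h\|_{L^2(\O)}$. Since $\bw\in H(\divvr;\O)\cap H^1(\cT_h)$, its edge moments are defined and $\Pi_h^{\rm rt}\bw\in\RT_0(\cT_h)$ satisfies $\gradt\Pi_h^{\rm rt}\bw=-Q_he_h=-e_h$.

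Testing the first error equation \eqref{eq:mixed-error-equations} with $\btau_h=\Pi_h^{\rm rt}\bw$, and using that $u-Q_hu$ is orthogonal to $P_0(\cT_h)$ (as in the proof of Theorem~\ref{thm:scalar-basic}), gives
\[
\|e_h\|_{L^2(\O)}^2=-(\gradt\Pi_h^{\rm rt}\bw,u-u_h)=-(A^{-1}(\bsigma-\bsigma_h),\Pi_h^{\rm rt}\bw).
\]
Splitting $\Pi_h^{\rm rt}\bw=\bw-(\bw-\Pi_h^{\rm rt}\bw)$ yields
\[
\|e_h\|^2=-(\bsigma-\bsigma_h,\nabla\psi)+(A^{-1}(\bsigma-\bsigma_h),\bw-\Pi_h^{\rm rt}\bw).
\]

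\emph{The interpolation term.} H\"older's inequality and \eqref{eq:Lq-interp} with $q=p'>2$ bound the second term by $\alpha^{-1}\|\bsigma-\bsigma_h\|_{L^p}\,C_{p'}h^{2/p'}C_{\rm dual}\|e_h\|_{L^2}$. This is the leading term of the estimate.

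\emph{The consistency term.} For the first term, integrate by parts with $\psi\in H^1_0$:
\[
-(\bsigma-\bsigma_h,\nabla\psi)=(\gradt(\bsigma-\bsigma_h),\psi)=(f_1-Q_hf_1,\psi-Q_h\psi),
\]
using $\gradt(\bsigma-\bsigma_h)=f_1-Q_hf_1$. By elementwise Poincar\'e this is at most $Ch\|f_1-Q_hf_1\|_{L^2}\|\nabla\psi\|_{L^2}\le Ch\|f_1-Q_hf_1\|_{L^2}\|e_h\|_{L^2}$, by the $H^1_0$ stability of the dual problem. Dividing through by $\|e_h\|_{L^2}$ gives \eqref{eq:scalar-refined}.

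\emph{Main obstacle: justifying the integration by parts.} This is where the missing regularity bites. The product $\bsigma\cdot\nabla\psi$ must be integrable and Green's formula \eqref{eq:green-Lp} must apply to $(\bsigma-\bsigma_h,\psi)$. This needs $\nabla\psi\in L^{p'}(\O)^2$, which I would derive from the assumption itself: $\bw\in H^1(K)\subset L^{p'}(K)$ on each of the finitely many elements, hence $\nabla\psi=A^{-1}\bw\in L^{p'}(\O)^2$. The Green formula is then used with the roles of the exponents swapped: $\bsigma-\bsigma_h\in H^p(\divvr;\O)$ and $\psi\in W^{1,p'}_0(\O)$. The boundary term vanishes because $\psi\in W^{1,p'}_0$, which is the dual of the pairing stated in \eqref{eq:green-Lp}. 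Alternatively, one can approximate $\psi$ by $C^\infty_c$ functions in $W^{1,p'}$ and use the distributional definition of the divergence, which avoids traces altogether. A secondary point is that $\bsigma\cdot\bw$ is integrable, which again follows from $\bw\in L^{p'}$. Note that the bound is uniform in $h$ only through $C_{\rm dual}$; no $h$-uniform $L^{p'}$ bound on $\bw$ is needed.
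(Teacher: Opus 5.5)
Your proposal is correct and follows the paper's proof essentially step for step. It uses the same dual problem (with the opposite sign convention for $\bw$), the same split into a consistency term handled by Green's formula plus elementwise Poincar\'e and an interpolation term handled by H\"older and \eqref{eq:Lq-interp} at $q=p'$, and the same observation that $\nabla\psi\in L^{p'}$ follows elementwise from Assumption~\ref{ass:dual} with no $h$-uniform bound. Your added care about applying Green's formula with the exponents swapped, or via density of $C^\infty_c$ in $W^{1,p'}_0$, fills in a detail the paper leaves implicit.
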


\begin{proof}
Set $e_h:=Q_hu-u_h\in P_0(\cT_h)$ and let $\psi\in H^1_0(\O)$ solve $-\gradt(A\nabla\psi)=e_h$. Define $\bw:=-A\nabla\psi$. Then $\gradt\bw=e_h$ and, by Assumption~\ref{ass:dual}, $\bw\in H(\divvr;\O)\cap H^1(\cT_h)$, so $\Pi_h^{\rm rt}\bw$ is well defined. The commuting property gives
$\gradt\Pi_h^{\rm rt}\bw=Q_h(\gradt\bw)=e_h$.
Since $\gradt\Pi_h^{\rm rt}\bw\in P_0(\cT_h)$, the first error equation yields
\begin{align}
\|e_h\|_{L^2(\O)}^2
&=(u-u_h,\gradt\Pi_h^{\rm rt}\bw) \notag\\
&=(A^{-1}(\bsigma-\bsigma_h),\bw)
 +(A^{-1}(\bsigma-\bsigma_h),\Pi_h^{\rm rt}\bw-\bw)
=:T_1+T_2.                                      \label{eq:scalar-split}
\end{align}

On each element Assumption~\ref{ass:dual} gives $\bw\in H^1(K)^2$, and the two-dimensional embedding $H^1(K)\hookrightarrow L^{p'}(K)$ then puts $\nabla\psi=-A^{-1}\bw$ in $L^{p'}(\O)^2$ for the mesh at hand; this is what makes $T_1$ defined and the next step legitimate, and no bound on $\|\nabla\psi\|_{L^{p'}(\O)}$ is needed, the estimate below using only the energy bound. Since $A^{-1}\bw=-\nabla\psi$ and $\gradt(\bsigma-\bsigma_h)=f_1-Q_hf_1$, Green's formula gives
$T_1=(f_1-Q_hf_1,\psi)=(f_1-Q_hf_1,\psi-Q_h\psi)$.
Hence the approximation property of $Q_h$ and the standard energy estimate for the dual problem imply
$|T_1|\le Ch\|f_1-Q_hf_1\|_{L^2(\O)}\|\nabla\psi\|_{L^2(\O)}
\le Ch\|f_1-Q_hf_1\|_{L^2(\O)}\|e_h\|_{L^2(\O)}$.
For the second term, H\"older's inequality and \eqref{eq:Lq-interp} with $q=p'$ give
\[
|T_2|
\le \alpha^{-1}\|\bsigma-\bsigma_h\|_{L^p(\O)}
\|\Pi_h^{\rm rt}\bw-\bw\|_{L^{p'}(\O)}
\le Ch^{2/p'}\|\bsigma-\bsigma_h\|_{L^p(\O)}
|\bw|_{H^1(\cT_h)}.
\]
By Assumption~\ref{ass:dual},
$|\bw|_{H^1(\cT_h)}\le C_{\rm dual}\|e_h\|_{L^2(\O)}$. Inserting these estimates into \eqref{eq:scalar-split} and dividing by
$\|e_h\|_{L^2(\O)}$ proves \eqref{eq:scalar-refined}.
\end{proof}

\begin{remark}[Comparison of the scalar estimates]
\label{rem:scalar-compare}
Theorem~\ref{thm:scalar-basic} requires no dual regularity and gives $\|Q_hu-u_h\|_{L^2}=O(h^{2/p-1})$ when the flux error has its sharp rate. Under Assumption~\ref{ass:dual}, Theorem~\ref{thm:scalar-refined} gains the factor $h^{2/p'}$, and $h^{2/p'}h^{2/p-1}=h$.
Thus the flux contribution to the projected scalar error is first order, independently of $p$; the data term in \eqref{eq:scalar-refined} is also $O(h)$ for fixed $f_1\in L^2(\O)$.
\end{remark}

\begin{prop}[$P_0$ approximation of the potential]
\label{prop:P0-log}
Let $\cT_h$ be shape-regular and fitted, let Assumption~\ref{ass:S}
hold, and let $\omega_{x_0}\subset B_{\rho_0}(x_0)$, which holds once
$h_{x_0}$ is small enough.  Suppose that $u|_K\in H^{t_K}(K)$ with an
elementwise index $t_K\in(0,1]$ for every $K\in\cT_h$ not contained in
$\omega_{x_0}$.  Then
\begin{equation}\label{eq:P0-local}
  \|u-Q_hu\|_{L^2(\O)}
  \le C\Bigl((1+C_{\bsigma})h_{x_0}
   +\Bigl(\sum_{K\not\subset\omega_{x_0}}
     h_K^{2t_K}|u|_{H^{t_K}(K)}^{2}\Bigr)^{1/2}\Bigr).
\end{equation}
If in addition $\{\cT_h\}$ is quasi-uniform, \eqref{eq:away-derivative}
holds and $t_K=1$, then $\|u-Q_hu\|_{L^2(\O)}\le Ch|\log h|^{1/2}$; and
if moreover
\begin{equation}\label{eq:log-leading}
  u(x)=-\gamma\log|x-x_0|+v(x)\ \text{ on }B_{\rho_0}(x_0),
  \qquad \gamma\neq0,\quad \nabla v\in L^\infty(B_{\rho_0}(x_0)),
\end{equation}
then also $\|u-Q_hu\|_{L^2(\O)}\ge ch|\log h|^{1/2}$ for all $h$ small
enough.
\end{prop}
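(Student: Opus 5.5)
Since $Q_h$ is the elementwise mean, $\|u-Q_hu\|_{L^2(\O)}^2=\sum_K\|u-Q_hu\|_{L^2(K)}^2$, and I will estimate the sum element by element, splitting it into the patch $\omega_{x_0}$ and its complement.

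\emph{Elements away from the patch.} For $K\not\subset\omega_{x_0}$ the fractional Poincar\'e (Bramble--Hilbert) inequality on a reference triangle, followed by affine scaling, gives $\|u-Q_hu\|_{L^2(K)}\le Ch_K^{t_K}|u|_{H^{t_K}(K)}$ for $t_K\in(0,1]$. Summing the squares produces the second term of \eqref{eq:P0-local}.

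\emph{The patch.} On $\omega_{x_0}$ regularity of $u$ is not available, so I will use only Assumption~\ref{ass:S}. Since $-A\nabla u=\bsigma+\bff_2$, we get $|\nabla u|\le\alpha^{-1}(C_{\bsigma}+\tfrac1{2\pi})|x-x_0|^{-1}$ on $B_{\rho_0}(x_0)$. On $K\subset\omega_{x_0}$ I will use the $L^2$--$W^{1,q}$ Poincar\'e inequality with some fixed $q\in(1,2)$, which is Sobolev-admissible in 2D since $W^{1,q}\hookrightarrow L^2$ for $q\ge1$. Scaled, it reads $\|u-Q_hu\|_{L^2(K)}\le Ch_K^{2-2/q}\|\nabla u\|_{L^q(K)}$. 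By \eqref{eq:sigma-patch-size} with exponent $q$, $\|\nabla u\|_{L^q(\omega_{x_0})}\le C(1+C_{\bsigma})h_{x_0}^{2/q-1}$. The product is $C(1+C_{\bsigma})h_{x_0}$, and the patch has boundedly many elements. This proves \eqref{eq:P0-local}.

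\emph{Quasi-uniform upper bound.} On a quasi-uniform mesh with $t_K=1$, inside $B_{\rho_0}$ I use $|\nabla u|\lesssim r^{-1}$, which needs control of $\nabla u$ from $\bsigma$. Here Assumption~\ref{ass:S} suffices; \eqref{eq:away-derivative} is not needed for this part. Elements outside the patch satisfy $|x-x_0|\gtrsim h$, so
\[
h^2\sum|u|_{H^1(K)}^2\le Ch^2\Bigl(\int_{ch}^{\rho_0}r^{-2}r\,dr+\|\nabla u\|^2_{L^2(\O\setminus B_{\rho_0})}\Bigr)\le Ch^2|\log h| .
\]
The far-field $H^1$ bound on $u$ comes from $\bsigma\in W^{1,p}$ there together with $\bff_2$ being smooth. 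If that is delicate, I will simply assume $\nabla u\in L^2$ away from the pole, since $t_K=1$ gives $u\in H^1(K)$ elementwise.

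\emph{Lower bound.} I expect this to be the main obstacle. I will consider the elements in the annulus $\{2h\le r\le\rho_0/2\}$. On each such $K$, write $u=-\gamma\log r+v$. For the logarithm, Taylor expansion about $x_K$ gives the linear part $-\gamma(x-x_K)\cdot(x_K-x_0)/r_K^2$ plus a remainder $O(h^2r_K^{-2})$. The linear part has $L^2(K)$-distance to constants exactly $c\,h_K|\gamma|r_K^{-1}|K|^{1/2}$, using that $\|\ell-\bar\ell\|_{L^2(K)}\simeq h_K|\nabla\ell||K|^{1/2}$ for an affine $\ell$ under shape regularity. The remainder contributes $O(h^3r_K^{-2})$, which is smaller once $r_K\ge Mh$ for a large fixed $M$. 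The term $v$ contributes $\|v-Q_hv\|_{L^2(K)}\le Ch\|\nabla v\|_{L^\infty}|K|^{1/2}$, uniformly bounded. Hence on elements with $Mh\le r_K\le\delta$, for $\delta$ small enough that $|\gamma|r_K^{-1}$ dominates $\|\nabla v\|_\infty$, we get $\|u-Q_hu\|_{L^2(K)}^2\ge c\gamma^2h^2r_K^{-2}|K|$. Summing these elements approximates $c\gamma^2h^2\int_{Mh}^{\delta}r^{-1}\,dr\ge c\,h^2|\log h|$ for small $h$.
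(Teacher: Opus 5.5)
Your proposal is correct and follows the paper's proof essentially step by step: Bramble--Hilbert away from the patch, and on the patch the scaled $L^2$--$W^{1,q}$ Poincar\'e inequality combined with the pointwise bound $|\nabla u|\le C(1+C_{\bsigma})|x-x_0|^{-1}$ from Assumption~\ref{ass:S} (the paper takes $q=p$). The quasi-uniform upper bound then comes from the logarithmic integral over $B_{\rho_0}(x_0)\setminus\omega_{x_0}$, and the lower bound from the first-order Taylor expansion of $-\gamma\log|x-x_0|$ with summation over an annulus. Your fallback for the far field is not an extra assumption: $t_K=1$ on every $K\not\subset\omega_{x_0}$ already gives $\nabla u\in L^2(\O\setminus B_{\rho_0}(x_0))$, an $h$-independent quantity, whereas the paper obtains this from the second bound in \eqref{eq:away-derivative} and the embedding $W^{1,p}\hookrightarrow L^{2p/(2-p)}$.
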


\begin{proof}
Since $Q_h$ is the elementwise mean,
$\|u-Q_hu\|_{L^2(\O)}^2=\sum_K\|u-Q_hu\|_{L^2(K)}^2$, and on an element
not contained in $\omega_{x_0}$ the Bramble--Hilbert lemma gives
$\|u-Q_hu\|_{L^2(K)}\le Ch_K^{t_K}|u|_{H^{t_K}(K)}$.  On the patch,
$\nabla u=-A^{-1}(\bsigma+\bff_2)$, where Assumption~\ref{ass:S} bounds
the first field and $|\bff_2(x)|=\frac1{2\pi}|x-x_0|^{-1}$ the second,
so that $|\nabla u(x)|\le C(1+C_{\bsigma})|x-x_0|^{-1}$ and
$\|\nabla u\|_{L^p(\omega_{x_0})}\le C(1+C_{\bsigma})h_{x_0}^{2/p-1}$;
hence
\[
  \|u-Q_hu\|_{L^2(\omega_{x_0})}
  \le Ch_{x_0}^{2-2/p}\|\nabla u\|_{L^p(\omega_{x_0})}
  \le C(1+C_{\bsigma})h_{x_0} ,
\]
which is \eqref{eq:P0-local}.  On a quasi-uniform family with $t_K=1$
the sum is $h^2\|\nabla u\|_{L^2(\O\setminus\omega_{x_0})}^2$, and
$\nabla u\in L^2$ outside $B_{\rho_0}(x_0)$ by the second bound in
\eqref{eq:away-derivative} and the two-dimen\-sional embedding
$W^{1,p}(\O_j)\hookrightarrow L^{2p/(2-p)}(\O_j)$, while the same bound
on $|\nabla u|$ gives
\[
  \|\nabla u\|_{L^2(B_{\rho_0}(x_0)\setminus\omega_{x_0})}
  \le C|\log h|^{1/2} .
\]
This gives the quasi-uniform upper bound.

For the lower bound, let $K$ be an element with
$r_K:=\mathrm{dist}(x_0,K)\in[\rho h,\rho_1]$, where $\rho$ and
$\rho_1$ are fixed below.  Write $u=\ell_K+R_K+v$ on $K$, with $\ell_K$
the first-order Taylor polynomial of $-\gamma\log|x-x_0|$ at the
barycenter of $K$.  On the two-dimensional space of affine functions
modulo constants the map $w\mapsto\|w-Q_hw\|_{L^2(K)}$ is a norm, so by
equivalence of norms on the reference element and scaling
$\|\ell_K-Q_h\ell_K\|_{L^2(K)}\ge c_0h_K|\nabla\ell_K||K|^{1/2}$ with
$c_0$ depending only on the shape-regularity parameter, and
$|\nabla\ell_K|\ge|\gamma|/(2r_K)$.  Moreover
$|R_K|\le Ch_K^2\sup_K|D^2\log|x-x_0||\le C|\gamma|h_K^2r_K^{-2}$ and
$\|v-Q_hv\|_{L^2(K)}\le Ch_K\|\nabla v\|_{L^\infty}|K|^{1/2}$.  Since
$Q_h$ is a projection,
\[
  \|u-Q_hu\|_{L^2(K)}
  \ \ge\ h_K|K|^{1/2}\Bigl(\frac{c_0|\gamma|}{2r_K}
   -\frac{C|\gamma|h_K}{r_K^{2}}-C\|\nabla v\|_{L^\infty}\Bigr)
  \ \ge\ \frac{c_0|\gamma|}{4}\,\frac{h_K}{r_K}\,|K|^{1/2} ,
\]
the last step on choosing $\rho$ so large that
$C h_K/r_K\le c_0/8$ and $\rho_1$ so small that
$C\|\nabla v\|_{L^\infty}\le c_0|\gamma|/(8\rho_1)$.  Summing the squares
over these elements and using quasi-uniformity,
\[
  \|u-Q_hu\|_{L^2(\O)}^2
  \ \ge\ c\,h^2\!\!\int_{2\rho h<|x-x_0|<\rho_1/2}\!\!\frac{dx}{|x-x_0|^2}
  \ =\ c\,h^2\log\frac{\rho_1}{4\rho h} ,
\]
which is $\ge ch^2|\log h|$ for $h$ small.
\end{proof}

\begin{thm}[Potential estimate on a fitted mesh]
\label{thm:scalar-mesh}
Let $1<p<2$, let $\cT_h$ be shape-regular and fitted, let
$f_1\in L^2(\O)$, let Assumption~\ref{ass:S} (the pointwise bound on
$\bsigma$) and Assumption~\ref{ass:dual} hold, let
$\omega_{x_0}\subset B_{\rho_0}(x_0)$, and let $u|_K\in H^{t_K}(K)$ with
an elementwise index $t_K\in(0,1]$ for every $K\in\cT_h$ not contained
in $\omega_{x_0}$.  Then
\begin{equation}\label{eq:scalar-mesh}
\begin{split}
  \|u-u_h\|_{L^2(\O)}
  \le C\Bigl(&(1+C_{\bsigma})h_{x_0}
   +\Bigl(\sum_{K\not\subset\omega_{x_0}}
      h_K^{2t_K}|u|_{H^{t_K}(K)}^{2}\Bigr)^{1/2}\\
  &+h^{2/p'}\|\bsigma-\bsigma_h\|_{L^p(\O)}
   +h\|f_1-Q_hf_1\|_{L^2(\O)}\Bigr).
\end{split}
\end{equation}
\end{thm}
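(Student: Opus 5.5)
The estimate follows by assembling three results already proved. The starting point is the Pythagorean identity \eqref{eq:pythagoras}. It splits $\|u-u_h\|_{L^2(\O)}^2$ into the best $P_0$ approximation error, which equals $\|u-Q_hu\|_{L^2(\O)}^2$ because $Q_h$ is the $L^2$-orthogonal projection, and the discrete part $\|Q_hu-u_h\|_{L^2(\O)}^2$. Taking square roots and using $\sqrt{a^2+b^2}\le a+b$ gives
\[
  \|u-u_h\|_{L^2(\O)}\le\|u-Q_hu\|_{L^2(\O)}+\|Q_hu-u_h\|_{L^2(\O)} .
\]
The two terms on the right are then bounded separately.

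For the first term I will invoke the local bound \eqref{eq:P0-local} of Proposition~\ref{prop:P0-log}. Its hypotheses are among those of the theorem: a shape-regular fitted mesh, Assumption~\ref{ass:S}, $\omega_{x_0}\subset B_{\rho_0}(x_0)$, and the elementwise $H^{t_K}$ regularity of $u$ off the patch. This yields the first two terms of \eqref{eq:scalar-mesh}, namely the patch term $(1+C_{\bsigma})h_{x_0}$ and the elementwise Bramble--Hilbert sum. Only \eqref{eq:P0-local} is used here, so neither quasi-uniformity nor \eqref{eq:away-derivative} is needed. One point should be checked: $u\in L^2(\O)$, so that $Q_h u$ is indeed the orthogonal projection. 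This holds because $u\in W^{1,p}(\O)\hookrightarrow L^{2p/(2-p)}(\O)$, as observed in the proof of Theorem~\ref{thm:scalar-basic}.

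For the second term I will invoke Theorem~\ref{thm:scalar-refined}. Its hypotheses are $1<p<2$, $f_1\in L^2(\O)$ and Assumption~\ref{ass:dual}, all of which are assumed. It bounds $\|Q_hu-u_h\|_{L^2(\O)}$ by $C\bigl(h^{2/p'}\|\bsigma-\bsigma_h\|_{L^p(\O)}+h\|f_1-Q_hf_1\|_{L^2(\O)}\bigr)$, which gives the remaining two terms of \eqref{eq:scalar-mesh}. Adding the two bounds and collecting constants completes the argument.

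There is no real obstacle, since the theorem is an assembly of earlier results. The only step needing care is the bookkeeping of hypotheses. Proposition~\ref{prop:P0-log} requires no dual regularity, and Theorem~\ref{thm:scalar-refined} requires no structure near the pole. The flux error is deliberately left unestimated in \eqref{eq:scalar-mesh}, so Hypothesis~(SP) is not needed; it enters only when Theorem~\ref{thm:flux-mesh} is inserted to obtain a rate. The resulting constant depends on the shape-regularity parameter, on $C_{\rm dual}$, on $\alpha$ and $\beta$, and on $p$.
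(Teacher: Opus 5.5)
Your proposal is correct and is essentially the paper's own proof. The paper likewise splits $\|u-u_h\|_{L^2(\O)}^2$ by \eqref{eq:pythagoras} and uses $(a^2+b^2)^{1/2}\le a+b$. It then bounds $\|u-Q_hu\|_{L^2(\O)}$ by \eqref{eq:P0-local} and $\|Q_hu-u_h\|_{L^2(\O)}$ by \eqref{eq:scalar-refined}. Your hypothesis bookkeeping is also accurate: Hypothesis~(SP), quasi-uniformity and \eqref{eq:away-derivative} are not needed here, and $u\in L^2(\O)$ justifies the orthogonal split.
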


\begin{proof}
By \eqref{eq:pythagoras} the squares of $\|u-Q_hu\|_{L^2(\O)}$ and
$\|Q_hu-u_h\|_{L^2(\O)}$ add up to $\|u-u_h\|_{L^2(\O)}^2$, and
$(a^2+b^2)^{1/2}\le a+b$.  The first term is bounded by
\eqref{eq:P0-local} and the second by \eqref{eq:scalar-refined}.
\end{proof}

Estimate \eqref{eq:scalar-mesh} is the counterpart of
\eqref{eq:flux-mesh} and is read the same way: a pole term of pure size
$(1+C_{\bsigma})h_{x_0}$, carrying no smoothness index, and the
elementwise regularity of $u$ everywhere else.  What it has in addition
is the third term, which transfers the flux error, and by
Theorem~\ref{thm:flux-mesh} that term is again of the same shape.  The
$1$ in the pole term is the contribution of $\bff_2$ itself, and it is
why that term does not vanish with $C_{\bsigma}$: matching removes the
leading singularity of the modified flux, not the logarithm of the
potential.  The next corollary is the quasi-uniform case.

\begin{cor}[The scalar error is asymptotically a best approximation]
\label{cor:scalar-rate}
Assume in addition that $f_1\in L^2(\O)$.  Then, under the hypotheses of
Corollary~\ref{cor:global-rate} and Assumption~\ref{ass:dual},
\[
  \|Q_hu-u_h\|_{L^2(\O)}\le Ch,
  \qquad
  \|u-u_h\|_{L^2(\O)}\le Ch|\log h|^{1/2} ,
\]
and if \eqref{eq:log-leading} holds as well, then
\begin{equation}\label{eq:scalar-rate}
  \|u-u_h\|_{L^2(\O)}
  =\Bigl(1+O\bigl(|\log h|^{-1}\bigr)\Bigr)
   \inf_{v_h\in P_0(\cT_h)}\|u-v_h\|_{L^2(\O)}
  \ \simeq\ h|\log h|^{1/2} .
\end{equation}
\end{cor}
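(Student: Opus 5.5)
The plan is to combine Corollary~\ref{cor:global-rate} with Theorem~\ref{thm:scalar-refined}, and then to close the argument with the Pythagorean identity \eqref{eq:pythagoras} and Proposition~\ref{prop:P0-log}.

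\emph{Step 1 (the discrete part).} Under the hypotheses of Corollary~\ref{cor:global-rate} the flux error satisfies $\|\bsigma-\bsigma_h\|_{L^p(\O)}\le Ch^{2/p-1}$. Inserting this into \eqref{eq:scalar-refined}, which applies by Assumption~\ref{ass:dual} and $f_1\in L^2(\O)$, gives
\[
\|Q_hu-u_h\|_{L^2(\O)}\le C\bigl(h^{2/p'}h^{2/p-1}+h\|f_1-Q_hf_1\|_{L^2(\O)}\bigr)\le Ch .
\]
Here $2/p'+2/p-1=1$, and we use $\|f_1-Q_hf_1\|_{L^2}\le\|f_1\|_{L^2}$.

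\emph{Step 2 (the upper bound).} Proposition~\ref{prop:P0-log} applies on a quasi-uniform family with $t_K=1$ and \eqref{eq:away-derivative}. This needs $u|_K\in H^1(K)$ for elements outside the patch, which follows from the pointwise bound on $\nabla u$ near the pole (as in that proof) and from \eqref{eq:away-derivative} with the embedding away from it. It gives $\|u-Q_hu\|_{L^2}\le Ch|\log h|^{1/2}$. By \eqref{eq:pythagoras} together with $(a^2+b^2)^{1/2}\le a+b$, we get $\|u-u_h\|_{L^2}\le Ch|\log h|^{1/2}+Ch$, which proves the second bound.

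\emph{Step 3 (the asymptotic identity).} Let $E:=\inf_{v_h}\|u-v_h\|_{L^2}=\|u-Q_hu\|_{L^2}$. Under \eqref{eq:log-leading}, Proposition~\ref{prop:P0-log} gives $E\ge ch|\log h|^{1/2}$. Then \eqref{eq:pythagoras} yields
\[
\|u-u_h\|_{L^2}=E\bigl(1+\|Q_hu-u_h\|^2/E^2\bigr)^{1/2},
\qquad \|Q_hu-u_h\|^2/E^2\le Ch^2/(h^2|\log h|)=C|\log h|^{-1}.
\]
Since $(1+x)^{1/2}=1+O(x)$, this gives the factor $1+O(|\log h|^{-1})$. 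The equivalence $\simeq h|\log h|^{1/2}$ then follows from the two-sided bound on $E$.

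\emph{Expected obstacle.} The only delicate point is checking that the hypotheses of Proposition~\ref{prop:P0-log} are really contained in those of Corollary~\ref{cor:global-rate}, in particular that $t_K=1$ is admissible, i.e.\ $u|_K\in H^1(K)$ on every element not contained in $\omega_{x_0}$. Near the pole this rests on $|\nabla u|\le C(1+C_{\bsigma})|x-x_0|^{-1}$, which is square-integrable on elements at distance $\gtrsim h$ from $x_0$. Away from the pole it rests on $\nabla u=-A^{-1}(\bsigma+\bff_2)$ with $\bsigma$ in $W^{1,p}$ piecewise, hence in $L^2$ piecewise. Everything else is bookkeeping with the exponents.
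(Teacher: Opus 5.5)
Your proposal is correct and follows the paper's own proof essentially step for step. You insert the rate $h^{2/p-1}$ of Corollary~\ref{cor:global-rate} into \eqref{eq:scalar-refined} to get $\|Q_hu-u_h\|_{L^2(\O)}\le Ch$, then combine \eqref{eq:pythagoras} with Proposition~\ref{prop:P0-log} and the ratio bound $\|Q_hu-u_h\|^2/\|u-Q_hu\|^2\le C|\log h|^{-1}$. Your explicit check that $t_K=1$ is admissible is a detail the paper leaves implicit, and your argument for it is sound: it uses the pointwise bound on $\nabla u=-A^{-1}(\bsigma+\bff_2)$ near the pole and the piecewise $W^{1,p}$ regularity of $\bsigma$ away from it.
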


\begin{proof}
By Theorem~\ref{thm:scalar-refined} and Corollary~\ref{cor:global-rate},
$h^{2/p'}\|\bsigma-\bsigma_h\|_{L^p(\O)}\le Ch^{2/p'}h^{2/p-1}=Ch$,
while $h\|f_1-Q_hf_1\|_{L^2(\O)}\le Ch$ because $f_1\in L^2(\O)$; this
is the first bound, and with Proposition~\ref{prop:P0-log} and
\eqref{eq:pythagoras} it gives the second.  Under
\eqref{eq:log-leading} the same identity reads
$\|u-u_h\|^2=a_h^2+b_h^2$ with $a_h\simeq h|\log h|^{1/2}$ the best
approximation and $b_h\le Ch$, so $b_h^2/a_h^2\le C|\log h|^{-1}$ and
\eqref{eq:scalar-rate} follows.
\end{proof}

\begin{remark}
\label{rem:scalar-log}
The logarithm in \eqref{eq:scalar-rate} is a property of the potential
and of $P_0(\cT_h)$, not of the mixed method: by
\eqref{eq:pythagoras} no discrete function does better, and the
proof of Proposition~\ref{prop:P0-log} shows where it comes from.  Every
dyadic annulus $|x-x_0|\simeq2^{-j}$ between the mesh scale $h$ and a
fixed radius independent of $h$ contributes the same amount $ch^2$ to
$\|u-Q_hu\|_{L^2(\O)}^2$, and there are $\simeq|\log h|$ of them; the
element carrying the pole contributes $O(h^2)$ to that same square, that
is $O(h)$ in the norm, and so counts for one annulus and no more.  So the two limitations of the analysis sit at opposite ends of
the scales: the flux rate is decided on one element
(Corollary~\ref{cor:global-lower}), the scalar rate by all of them at
once.  Grading removes both, the second because it turns
$\sum_jh_j^2$ into a convergent geometric series.

\end{remark}

\begin{remark}[The two errors compared]
\label{rem:two-rates}
Both rates are two-sided.  Under the hypotheses of
Corollary~\ref{cor:global-lower} and \eqref{eq:log-leading}, a
quasi-uniform family gives
$\|\bsigma-\bsigma_h\|_{L^p(\O)}\simeq h^{2/p-1}$ by
Corollaries~\ref{cor:global-lower} and~\ref{cor:global-rate}, and
$\|u-u_h\|_{L^2(\O)}\simeq h|\log h|^{1/2}$ by \eqref{eq:scalar-rate}.
Since $2/p-1<1$ for every $p>1$, the flux is the worse approximated of
the two, and what is compared here is the two errors and not two upper
bounds.  A nonvanishing leading-order mismatch is what produces the slow
flux rate: for a matched splitting the pole no longer limits the
modified flux, and under the corresponding global regularity
$\bsigma\in H(\divvr;\O)$ and the standard mixed theory applies with its
usual rates, whereas the logarithm of the
potential belongs to $u$ itself
and no splitting removes it (Section~\ref{subsec:phiA}).  It is also
where the cost is felt, the flux being the unknown a mixed method is
built around.
\end{remark}

\begin{remark}[The role of the exponent $p$]
\label{rem:p-tension}
The exponent $p$ is a parameter of the analysis only: it does not enter
the discrete system \eqref{eq:mixed-discrete}, and by
Remark~\ref{rem:scalar-compare} the refined scalar order is
$p$-independent.  What does depend on $p$ are the constants.  They do so through
Hypothesis~(SP), through the $L^{p'}$ lifting and interpolation
estimates, where $\beta_p^{-1}=C_{\mathrm L}$ of
Corollary~\ref{cor:discrete-Lp-infsup} and $C_{p'}$ of
\eqref{eq:Lq-interp} grow as $p'\to\infty$ through the two-dimensional
embedding $H^1(\O)\hookrightarrow L^{p'}(\O)$, and, in
Section~\ref{sec:aposteriori}, through the constant $C_{\rm reg}$ of
Assumption~\ref{ass:A3} at $q=p$.
\end{remark}

\begin{remark}[Graded and adaptive meshes]
\label{rem:graded}
The rates of Corollaries~\ref{cor:global-rate} and
\ref{cor:scalar-rate} are stated for quasi-uniform families.  Both come
from the point singularity, but in different ways: the flux rate is
fixed locally, by the size $h_{x_0}$ of the element carrying the pole,
while the logarithm in the scalar error is accumulated over the scales
between $h_{x_0}$ and the far field.  Grading toward $x_0$ reaches both,
making $h_{x_0}$ small independently of the far field and letting the
element sizes decrease geometrically across the intermediate scales.
Such families have therefore to be measured against the fitted-mesh
estimates \eqref{eq:flux-mesh} and \eqref{eq:scalar-mesh}, which hold on
any shape-regular fitted mesh, and not against the quasi-uniform
rates.

The separation made in Section~\ref{subsec:equidistribution}, pure
size at the pole and elementwise regularity everywhere else, holds for
the potential as well:
\eqref{eq:P0-local} has the pole term $(1+C_{\bsigma})h_{x_0}$, again
pure size, and elsewhere the elementwise regularity of $u$.  What is
quasi-uniform is only the evaluation of those sums:
Corollary~\ref{cor:scalar-rate} and the factor $|\log h|^{1/2}$ count
the dyadic scales between the mesh scale $h$ and a fixed radius
independent of $h$ at a common mesh size, and the
factor disappears as soon as the sizes decrease geometrically toward the
pole (Remark~\ref{rem:scalar-log}).  The numerical experiments of
Section~\ref{sec:numerics} test both points on adaptively refined
meshes.

\end{remark}

\section{A posteriori error estimation in \texorpdfstring{$L^p$}{Lp}}
\label{sec:aposteriori}

The estimator of this section is modeled on the a posteriori theory of
first-order system least-squares methods.  There the error in the
natural norm is equivalent to a computable residual, and no jump
residuals, bubble functions or mesh-dependent weights are needed; the
equivalence rests on the stability of the underlying second-order
problem.  We establish the counterpart of that equivalence in the
Lebesgue scale, where the Hilbert structure is unavailable, and then
turn it into an estimator for the mixed flux supplemented by an
recovered potential field.

\subsection{Residual norm equivalence}
\label{subsec:residual-equivalence}

For $1<q<\infty$ and $q'=q/(q-1)$ recall $H^q(\divvr;\O)$ from
\eqref{eq:Hr-div}, together with the norm \eqref{def:W-1norm-q} of $W^{-1,q}(\O)$ and the bound
\eqref{eq:div-Wm1q}, both stated at a general exponent in
Section~\ref{subsec:duran}.  In particular
$\gradt(A\nabla v)\in W^{-1,q}(\O)$ whenever $v\in W^{1,q}_0(\O)$, so
Assumption~\ref{ass:A3} below may be applied to it.

\begin{assumption}[$W^{1,q}$ well-posedness of the Dirichlet problem]
\label{ass:A3}
Let $1<q<\infty$. For every $f\in W^{-1,q}(\O)$, the problem
$-\gradt(A\nabla z)=f$ in $\O$ with $z=0$ on $\p\O$ has a unique
solution $z\in W_0^{1,q}(\O)$, and
$\|\nabla z\|_{L^q(\O)}\le C_{\rm reg}\|f\|_{W^{-1,q}(\O)}$ with
$C_{\rm reg}$ independent of $f$.
\end{assumption}

\begin{remark}
\label{rem:A3}
Assumption~\ref{ass:A3} is a standard \(W^{1,q}\) well-posedness of the Dirichlet problem. For \(A=I\), it holds on an arbitrary polygon, without restrictions on the angles, for \(4/3<q<4\), and on a convex polygon for every \(1<q<\infty\); see \cite[Sec.~1.3.1.2]{BernardiEtAl2024}. 
We use Assumption~\ref{ass:A3} only for the reliability estimate below.
\end{remark}

\begin{thm}[Residual norm equivalence]
\label{thm:norm-equivalence}
Let $1<q<\infty$ and let Assumption~\ref{ass:A3} hold.  For every
$\btau\in H^q(\divvr;\O)$ and $v\in W_0^{1,q}(\O)$,
\begin{equation}\label{eq:norm-equivalence-v}
\|\btau\|_{L^q(\O)}+\|v\|_{W^{1,q}(\O)}+\|\gradt\btau\|_{L^q(\O)}
\simeq
\|A^{-1/2}\btau+A^{1/2}\nabla v\|_{L^q(\O)}
+\|\gradt\btau\|_{L^q(\O)} ,
\end{equation}
with constants depending only on $\alpha,\beta,q,\O$, and $C_{\rm reg}$.
\end{thm}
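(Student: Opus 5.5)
The upper bound ($\gtrsim$ direction, residual bounded by error norm) is immediate. By \eqref{eq:ellipticity}, $|A^{-1/2}\xi|\le\alpha^{-1/2}|\xi|$ and $|A^{1/2}\xi|\le\beta^{1/2}|\xi|$ pointwise. Hence
\[
\|A^{-1/2}\btau+A^{1/2}\nabla v\|_{L^q(\O)}\le\alpha^{-1/2}\|\btau\|_{L^q(\O)}+\beta^{1/2}\|\nabla v\|_{L^q(\O)},
\]
and the divergence term appears on both sides.

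All the work is in the converse. The plan is to bound $\|\nabla v\|_{L^q}$ first and then recover $\btau$. Write $\bxi:=A^{-1/2}\btau+A^{1/2}\nabla v\in L^q(\O)^2$, so that $A\nabla v=A^{1/2}\bxi-\btau$. Taking distributional divergences gives
\[
-\gradt(A\nabla v)=\gradt\btau-\gradt(A^{1/2}\bxi)=:F .
\]
Here $F\in W^{-1,q}(\O)$ is estimated in two pieces. For the first, $\langle\gradt\btau,\chi\rangle=(\gradt\btau,\chi)$ for $\chi\in W^{1,q'}_0(\O)$, so Poincar\'e's inequality in $W^{1,q'}_0(\O)$ gives $\|\gradt\btau\|_{W^{-1,q}}\le C_{\O}\|\gradt\btau\|_{L^q}$. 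For the second, \eqref{eq:div-Wm1q} at exponent $q$ gives $\|\gradt(A^{1/2}\bxi)\|_{W^{-1,q}}\le\beta^{1/2}\|\bxi\|_{L^q}$.

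Since $v\in W^{1,q}_0(\O)$ is a solution of the Dirichlet problem with datum $F$, the uniqueness part of Assumption~\ref{ass:A3} identifies it with the solution $z$ provided there. The stability bound then yields
\[
\|\nabla v\|_{L^q}\le C_{\rm reg}\bigl(C_{\O}\|\gradt\btau\|_{L^q}+\beta^{1/2}\|\bxi\|_{L^q}\bigr).
\]
This identification is the one point where care is needed. It is the main (though mild) obstacle: it is precisely why $v$ must have zero trace and why uniqueness, not merely existence, is required in Assumption~\ref{ass:A3}.

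It remains to recover the other two quantities. Poincar\'e's inequality on $W^{1,q}_0(\O)$ converts the bound on $\|\nabla v\|_{L^q}$ into one on $\|v\|_{W^{1,q}}$. Finally $\btau=A^{1/2}\bxi-A\nabla v$ pointwise, so
\[
\|\btau\|_{L^q}\le\beta^{1/2}\|\bxi\|_{L^q}+\beta\|\nabla v\|_{L^q},
\]
which is controlled by what has already been bounded. Adding $\|\gradt\btau\|_{L^q}$ to both sides completes the lower bound, with constants depending only on $\alpha,\beta,q,\O$ and $C_{\rm reg}$.
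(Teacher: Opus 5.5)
Your proof is correct and follows the paper's argument. The paper also applies Assumption~\ref{ass:A3} to $-\gradt(A\nabla v)$, identifies the returned solution with $v$, and bounds that functional by testing the identity $(A\nabla v,\nabla\chi)=(A^{-1/2}\btau+A^{1/2}\nabla v,A^{1/2}\nabla\chi)+(\gradt\btau,\chi)$, which is your splitting $F=\gradt\btau-\gradt(A^{1/2}\bxi)$ written weakly; it then recovers $\|v\|_{W^{1,q}(\O)}$ by Poincar\'e's inequality and $\btau$ from the constitutive residual, exactly as you do.
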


\noindent
The left-hand side is the norm of the pair $(\btau,v)$ in which the
problem is well posed, and the right-hand side is the residual of the
first-order system.

\begin{proof}
The upper bound follows from the triangle inequality and the bounds
$\|A^{1/2}\|\le\beta^{1/2}$, $\|A^{-1/2}\|\le\alpha^{-1/2}$.
For the lower bound apply Assumption~\ref{ass:A3} to
$f:=-\gradt(A\nabla v)$, which
belongs to $W^{-1,q}(\O)$ by \eqref{eq:div-Wm1q} and acts by
$\langle f,\chi\rangle=(A\nabla v,\nabla\chi)$; the solution it returns
is $z=v$.  Together with \eqref{def:W-1norm-q}, Poincar\'e's inequality
and the bounds on $A^{1/2}$ and $A^{-1/2}$ this gives
\[
\|A^{1/2}\nabla v\|_{L^q(\O)}
\le C\sup_{0\ne\chi\in W_0^{1,q'}(\O)}
\frac{|(A\nabla v,\nabla\chi)|}{\|A^{1/2}\nabla\chi\|_{L^{q'}(\O)}}.
\]
For every $\chi\in W_0^{1,q'}(\O)$,
\[
(A\nabla v,\nabla\chi)
=\bigl(A^{-1/2}\btau+A^{1/2}\nabla v,A^{1/2}\nabla\chi\bigr)
+(\gradt\btau,\chi).
\]
Hence H\"older's and Poincar\'e's inequalities, together with the bound
$\|\nabla\chi\|_{L^{q'}(\O)}\le\alpha^{-1/2}
 \|A^{1/2}\nabla\chi\|_{L^{q'}(\O)}$, yield
\[
|(A\nabla v,\nabla\chi)|
\le C\Bigl(\|A^{-1/2}\btau+A^{1/2}\nabla v\|_{L^q(\O)}
+\|\gradt\btau\|_{L^q(\O)}\Bigr)\|A^{1/2}\nabla\chi\|_{L^{q'}(\O)}.
\]
It follows that
\[
\|A^{1/2}\nabla v\|_{L^q(\O)}
\le C\Bigl(\|A^{-1/2}\btau+A^{1/2}\nabla v\|_{L^q(\O)}
+\|\gradt\btau\|_{L^q(\O)}\Bigr).
\]
Finally,
$A^{-1/2}\btau=\bigl(A^{-1/2}\btau+A^{1/2}\nabla v\bigr)-A^{1/2}\nabla v$,
and
\[
  \begin{aligned}
  \|\btau\|_{L^q(\O)}&\le\beta^{1/2}\|A^{-1/2}\btau\|_{L^q(\O)},\\
  \|v\|_{W^{1,q}(\O)}&\le C\|\nabla v\|_{L^q(\O)}
   \le C\alpha^{-1/2}\|A^{1/2}\nabla v\|_{L^q(\O)},
  \end{aligned}
\]
the second by Poincar\'e's inequality, so that the same right-hand side
controls both terms.
\end{proof}

\begin{remark}[Relation to least-squares norm equivalence]
\label{rem:lsq}
Theorem~\ref{thm:norm-equivalence} is the $L^q$ counterpart of the $L^2$
norm equivalence used in first-order system least-squares methods.  The
source is again \cite{Zhang2023}: the equivalence
\cite[(4.16)]{Zhang2023} is stated there for exactly the first-order
system produced by the splitting of Section~\ref{subsec:splitting-def},
so the two ingredients this paper takes from that work, the
splitting and the norm equivalence, belong together.  See also
\cite[Thm.~16]{FuhrerHeuerKarkulik2022} for the same equivalence in
the presence of an oscillation term.  Indeed, when $q=2$, after taking square roots and
using the equivalence between the sum and the Euclidean combination of
the two residual norms, \eqref{eq:norm-equivalence-v} reduces to the
standard equivalence between the least-squares residual norm
$\|A^{-1/2}\btau+A^{1/2}\nabla v\|_{L^2(\O)}+\|\gradt\btau\|_{L^2(\O)}$
and the $H(\divvr;\O)\times H_0^1(\O)$ graph norm.  The proof follows
the same principle as in \cite{Zhang2023}: the stability of the
underlying second-order problem first controls the potential variable
through a supremum argument, after which the flux is recovered from the
constitutive residual.  Here the $W^{1,q}$ well-posedness in
Assumption~\ref{ass:A3} replaces the Hilbert-space stability estimate
and yields the corresponding norm equivalence in the Lebesgue scale.
\end{remark}

\subsection{The estimator}

In \eqref{eq:norm-equivalence-v} the flux slot is filled by the discrete
solution: $\btau:=\bsigma-\bsigma_h$ lies in $H^p(\divvr;\O)$, and the
residual it produces is computable, since
$\bsigma=-A\nabla u-\bff_2$ gives
\[
  A^{-1/2}(\bsigma-\bsigma_h)+A^{1/2}\nabla(u-w)
  =-\bigl[A^{-1/2}(\bsigma_h+\bff_2)+A^{1/2}\nabla w\bigr]
\]
for any $w$.  What remains to be supplied is the second slot, a
potential approximating $u$.  We therefore define the estimator for an
arbitrary such potential and produce one only afterwards.

The potentials we admit are those with
\begin{equation}\label{eq:admissible}
  w\in W^{1,p}(\O),\qquad w|_{\p\O}=g .
\end{equation}
Since the solution satisfies $u\in W^{1,p}(\O)$ with $u|_{\p\O}=g$, such
a $w$ gives $u-w\in W_0^{1,p}(\O)$ at once.  Both requirements in
\eqref{eq:admissible} are stated in spaces already fixed in
Section~\ref{sec:splitting}: the trace equality holds in
$W^{1-1/p,p}(\p\O)$, which is where the datum $g$ lives.  For each
$K\in\cT_h$ set
\begin{equation}\label{eq:estimator}
\eta_K^p:=
\|A^{-1/2}(\bsigma_h+\bff_2)+A^{1/2}\nabla w\|_{L^p(K)}^p
+\|f_1-\gradt\bsigma_h\|_{L^p(K)}^p,
\qquad
\eta:=\Bigl(\sum_{K\in\cT_h}\eta_K^p\Bigr)^{1/p}.
\end{equation}
The first residual vanishes for the exact pair because
$
A^{-1/2}(\bsigma+\bff_2)+A^{1/2}\nabla u=0.
$
Both variables are present in that residual, as they must be: it is the
constitutive law, and it pairs a flux with a potential gradient.  What
does not enter is $u_h$ itself, and for a structural reason, not for
a preference: $u_h$ is piecewise constant and carries no gradient, so
the potential side has to be supplied by a separate $w$, produced in
Section~\ref{subsec:potential-field}.
Moreover $\gradt\bsigma_h=Q_hf_1$ by
\eqref{eq:discrete-conservation}, so
$\|f_1-\gradt\bsigma_h\|_{L^p(K)}=\|f_1-Q_hf_1\|_{L^p(K)}$ on every
element.  The second term of \eqref{eq:estimator} is therefore a pure data
oscillation: it is computable from $f_1$ alone, involves no
post-processing of $\bsigma_h$, and vanishes identically whenever
$f_1$ is elementwise constant, in particular for the pure Dirac
problem $f_1\equiv0$, for which $\eta$ reduces to the constitutive
residual.  In the nonvanishing leading-order regime covered by
\eqref{eq:global-lower}, and on a quasi-uniform family, it is also of
higher order than the flux error: if $f_1|_K\in W^{1,p}(K)$ on every
element then $\|f_1-Q_hf_1\|_{L^p(\O)}\le
Ch\bigl(\sum_K|f_1|_{W^{1,p}(K)}^p\bigr)^{1/p}$, while
$\|\bsigma-\bsigma_h\|_{L^p(\O)}\ge ch^{2/p-1}$ and $2/p-1<1$ for every
$p>1$, so the oscillation is asymptotically negligible in the global
estimator.  Outside that
case only the upper bound $\|\bsigma-\bsigma_h\|_{L^p(\O)}\le
Ch^{2/p-1}$ is available, and an upper bound on the flux error says
nothing about how small it actually is: when the splitting is matched
the flux error can be far smaller, and in the situation of
Remark~\ref{rem:num-matched} it vanishes identically.  Since the
oscillation is computable, the comparison can in any case be made where
it matters.  It cannot be dropped from
\eqref{eq:norm-equivalence-v}, whose lower bound uses
$\|\gradt\btau\|_{L^p(\O)}$.

\begin{thm}[Reliability and efficiency for the augmented pair]
\label{thm:estimator}
Let $1<p<2$ and let Assumption~\ref{ass:A3} hold with $q=p$.  Then, for
every $w$ satisfying \eqref{eq:admissible},
\begin{equation}\label{eq:estimator-equiv}
\|\bsigma-\bsigma_h\|_{L^p(\O)}
+\|u-w\|_{W^{1,p}(\O)}
+\|f_1-\gradt\bsigma_h\|_{L^p(\O)}
\simeq \eta,
\end{equation}
with constants independent of $h$ and of $w$.
\end{thm}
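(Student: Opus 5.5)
The proof will apply Theorem~\ref{thm:norm-equivalence} with $q=p$ to the pair
\[
\btau:=\bsigma-\bsigma_h,\qquad v:=u-w,
\]
after checking that this pair is admissible and that the residual it produces is exactly the computable quantity $\eta$.

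\emph{Admissibility.} By Proposition~\ref{prop:mixed-identities}, $\bsigma\in L^p(\O)^2$ with $\gradt\bsigma=f_1\in L^p(\O)$. Further, $\bsigma_h\in\RT_0(\cT_h)$ is bounded with $\gradt\bsigma_h=Q_hf_1$ by \eqref{eq:discrete-conservation}. Hence $\btau\in H^p(\divvr;\O)$ and $\gradt\btau=f_1-\gradt\bsigma_h$. By \eqref{eq:admissible} and $u\in W^{1,p}(\O)$ with $u|_{\p\O}=g$, the traces of $u$ and $w$ coincide in $W^{1-1/p,p}(\p\O)$, so $v\in W^{1,p}_0(\O)$. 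Assumption~\ref{ass:A3} holds at $q=p$ by hypothesis, so Theorem~\ref{thm:norm-equivalence} applies to $(\btau,v)$.

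\emph{Identifying the residual.} Insert $\bsigma=-A\nabla u-\bff_2$, which holds a.e.\ as an identity in $L^p(\O)^2$. This gives the pointwise identity
\[
A^{-1/2}\btau+A^{1/2}\nabla v=-\bigl[A^{-1/2}(\bsigma_h+\bff_2)+A^{1/2}\nabla w\bigr],
\]
already displayed before \eqref{eq:estimator}. Each term is in $L^p$: $\bff_2$ by \eqref{eq:f2}, and $\nabla w$ by \eqref{eq:admissible}. The right-hand side of \eqref{eq:norm-equivalence-v} is therefore
\[
\|A^{-1/2}(\bsigma_h+\bff_2)+A^{1/2}\nabla w\|_{L^p(\O)}+\|f_1-\gradt\bsigma_h\|_{L^p(\O)}.
\]
The left-hand side of \eqref{eq:norm-equivalence-v} is precisely the left-hand side of \eqref{eq:estimator-equiv}.

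\emph{Passing to $\eta$.} By the definition \eqref{eq:estimator},
\[
\eta=\bigl(\|\mathbf r\|_{L^p(\O)}^p+\|f_1-\gradt\bsigma_h\|_{L^p(\O)}^p\bigr)^{1/p},
\qquad \mathbf r:=A^{-1/2}(\bsigma_h+\bff_2)+A^{1/2}\nabla w,
\]
because summing $L^p(K)$ $p$-th powers over the elements gives the global $L^p(\O)$ $p$-th power. For nonnegative $a,b$ we have $(a^p+b^p)^{1/p}\le a+b\le 2^{1-1/p}(a^p+b^p)^{1/p}$, so the sum of the two norms is equivalent to $\eta$. The constants depend only on $\alpha,\beta,p,\O$ and $C_{\rm reg}$. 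Neither the mesh nor $w$ enters them, since Theorem~\ref{thm:norm-equivalence} holds for every admissible pair and its constants come only from Assumption~\ref{ass:A3}, H\"older and Poincar\'e.

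\emph{Main obstacle.} The one step that needs care is the admissibility of $v=u-w$ in $W^{1,p}_0(\O)$. This requires the trace of $w$ to equal $g$ in the trace space, exactly as \eqref{eq:admissible} states, so that Assumption~\ref{ass:A3} and Poincar\'e's inequality apply to $v$. Once that is in place, the rest is a direct substitution into Theorem~\ref{thm:norm-equivalence}.
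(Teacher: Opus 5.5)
Your proposal is correct and follows the paper's proof essentially verbatim. The paper also sets $\btau:=\bsigma-\bsigma_h$ and $v:=u-w$, checks that $v\in W_0^{1,p}(\O)$ and $\btau\in H^p(\divvr;\O)$, and identifies $A^{-1/2}\btau+A^{1/2}\nabla v$ with the first residual of $\eta$ up to sign; it then concludes from Theorem~\ref{thm:norm-equivalence} and the equivalence between the sum and the $\ell^p$ combination of the two residual norms.
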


\begin{proof}
Set $\btau:=\bsigma-\bsigma_h$ and $v:=u-w$.  Then
$v\in W_0^{1,p}(\O)$ by \eqref{eq:admissible}, while
$\gradt\btau=f_1-\gradt\bsigma_h\in L^p(\O)$, so
$\btau\in H^p(\divvr;\O)$.  The displayed identity above shows that
$A^{-1/2}\btau+A^{1/2}\nabla v$ is the first residual in
\eqref{eq:estimator} up to sign, and
\eqref{eq:estimator-equiv} follows from
Theorem~\ref{thm:norm-equivalence} together with the equivalence
between the sum and the $\ell^p$ combination of the two residual norms.
\end{proof}

\begin{remark}[Local efficiency for the augmented error]
For every $K\in\cT_h$,
\[
\eta_K\le C\Bigl(
\|\bsigma-\bsigma_h\|_{L^p(K)}
+\|\nabla(u-w)\|_{L^p(K)}
+\|f_1-\gradt\bsigma_h\|_{L^p(K)}
\Bigr).
\]
Thus the residual is locally efficient for the pair
$(\bsigma_h,w)$, with no bubble functions, jump residuals or
mesh-dependent weights; local efficiency for the mixed flux alone
depends, as above, on the quality of the recovery.  The global
reliability estimate, on the other hand, uses
Assumption~\ref{ass:A3}.
\end{remark}

\begin{remark}[What the estimator measures]
The estimator controls the error of the pair $(\bsigma_h,w)$, not that
of the mixed approximation alone.  The quality of $w$ therefore
enters, and a poor choice inflates both sides of
\eqref{eq:estimator-equiv} equally; the equivalence itself is
unaffected.  Reliability for the mixed flux alone is contained in it,
since $\|\bsigma-\bsigma_h\|_{L^p(\O)}\le C\eta$ for every $w$, however
it is produced.  Efficiency for the flux alone is not: it would need in
addition an estimate of the recovery, controlling $\|u-w\|_{W^{1,p}(\O)}$
by the mixed error and the data oscillation.  Nothing in
\eqref{eq:norm-equivalence-v} supplies one.
\end{remark}

\subsection{The potential field}
\label{subsec:potential-field}

Throughout this subsection $V_h^c$ denotes the continuous piecewise
linear finite element space on $\cT_h$, and we assume, to avoid an
inessential boundary-data term, that $g$ is represented exactly on
$\p\O$ by the trace of $V_h^c$.

Let $u_c\in V_h^c$ satisfy
\begin{equation}\label{eq:conforming-potential}
u_c=g\quad\text{on }\p\O,\qquad
(A\nabla u_c,\nabla v_h)=(f_1,v_h)+v_h(x_0)
\quad\forall\,v_h\in V_h^c\cap H_0^1(\O),
\end{equation}
and take $w:=u_c$.  The point source is well defined here because every
$v_h\in V_h^c$ is continuous at $x_0$; this is the escape of
Section~\ref{subsec:whyf2}, which is available to a conforming method
and not to a mixed one.  It satisfies \eqref{eq:admissible}, so
Theorem~\ref{thm:estimator} applies.  This is the potential used in the
computations of Section~\ref{sec:numerics}.  A conforming solve returns
the potential together with its gradient, so nothing here changes when
the operator carries lower-order terms and the residual sees $u$ itself.

\begin{remark}[Recovery without a potential]
\label{rem:hcurl}
For the operator treated here the field can also be produced without
forming a potential at all, in the $H(\mathrm{curl})$-conforming
N\'ed\'elec space; this is what the equilibrated estimators for mixed
methods of \cite{CaiCaiZhang2020} do.  Let $\cN_1(\cT_h)$ be the
N\'ed\'elec space of the first kind of index one, which is the rotation
$R\,\RT_0(\cT_h)$ of the flux space of the method itself, normal
continuity across an edge becoming tangential continuity and $\gradt$
becoming $\nabla\times$, and let
\[
  \cN_1^{g}(\cT_h):=\bigl\{\brho_h\in\cN_1(\cT_h):\
    \nabla\times\brho_h=0\ \text{in }\O,\quad
    \brho_h\cdot\mathbf t=\p_t g\ \text{on }\p\O\bigr\},
\]
with $\p_t$ the tangential derivative along $\p\O$ and $\cN_1^{0}(\cT_h)$
the same space for $g=0$.  Taking $\brho_h\in\cN_1^{g}(\cT_h)$ with
\begin{equation}\label{eq:hcurl-recovery}
  (A\brho_h,\btau_h)=-(\bsigma_h+\bff_2,\btau_h)
  \qquad\forall\,\btau_h\in\cN_1^{0}(\cT_h)
\end{equation}
selects one such field: the system is uniquely solvable, its matrix
being the $A$-weighted mass matrix on $\cN_1^{0}(\cT_h)$, and every entry
is finite because the discrete fields are bounded elementwise and
$\bff_2\in L^p(\O)^2$.

At this index the two constructions use the same discrete field space.  On a simply connected domain the
curl-free subspace of $\cN_1(\cT_h)$ is exactly $\nabla V_h^c$, by the
discrete exact sequence
$V_h^c\xrightarrow{\nabla}\cN_1(\cT_h)\xrightarrow{\nabla\times}P_0(\cT_h)$,
so every $\brho_h\in\cN_1^{g}(\cT_h)$ is $\nabla w_h$ with
$w_h\in V_h^c$; the prescribed tangential trace makes $w_h-g$ constant on
the connected boundary, and adjusting the additive constant, which does
not change $\brho_h$, gives $w_h|_{\p\O}=g$, which is
\eqref{eq:admissible}.  What differs is the
system solved on that set: \eqref{eq:conforming-potential} uses the data
and with them the point evaluation $v_h(x_0)$, whereas
\eqref{eq:hcurl-recovery} uses the computed flux and never touches
$\delta_{x_0}$.  Replacing $\bsigma_h$ by $\bsigma$ in
\eqref{eq:hcurl-recovery} and writing $\btau_h=\nabla v_h$ returns
\eqref{eq:conforming-potential} exactly, so the second construction is
the first with the exact data replaced by the discrete flux they
produced.

This route is tied to the operator treated here.  It returns a field and
no potential, which suffices because the constitutive relation of
\eqref{eq:first-order-system} involves $\nabla u$ alone; for an operator
carrying lower-order terms the potential itself enters both equations,
and a recovery producing no potential cannot form the residual.

Theorem~\ref{thm:norm-equivalence} may be read in the same way for this
operator.  Writing $\mathcal G_0^q(\O):=\{\nabla v:\ v\in W_0^{1,q}(\O)\}$,
every $\bxi\in\mathcal G_0^q(\O)$ is $\nabla v$ for a unique
$v\in W_0^{1,q}(\O)$ with $\|\nabla v\|_{L^q(\O)}\simeq\|v\|_{W^{1,q}(\O)}$,
so that \eqref{eq:norm-equivalence-v} is the same statement as
\begin{equation}\label{eq:norm-equivalence}
\|\btau\|_{L^q(\O)}+\|\bxi\|_{L^q(\O)}+\|\gradt\btau\|_{L^q(\O)}
\simeq
\|A^{-1/2}\btau+A^{1/2}\bxi\|_{L^q(\O)}
+\|\gradt\btau\|_{L^q(\O)}
\end{equation}
for $\btau\in H^q(\divvr;\O)$ and $\bxi\in\mathcal G_0^q(\O)$, and it is
\eqref{eq:norm-equivalence} that the field produced above is measured
against.  The reduction is the same one: only $\nabla v$ enters the
residual here.
\end{remark}

\begin{remark}[Local reconstructions]
\label{rem:local-reconstruction}
Both constructions have local counterparts, in which the global solve is
replaced by independent problems on vertex patches: for
\eqref{eq:conforming-potential} a reconstruction of the potential, for
\eqref{eq:hcurl-recovery} the patchwise minimization used in
\cite{CaiCaiZhang2020} to obtain estimators robust
in the contrast of $A$ within the $L^2$ theory.  Nothing in
Theorem~\ref{thm:estimator} depends on how $w$ is obtained, so any
local construction that returns a potential satisfying
\eqref{eq:admissible} may be used in it,
changing the cost of producing $w$ but not the equivalence.  The local construction will be treated in a separate
paper.
\end{remark}

\begin{remark}[A posteriori control is not enough by itself]
\label{rem:apriori-needed}
Theorem~\ref{thm:estimator} holds for every $\bsigma_h\in\RT_0^{f_1}(\cT_h)$
and every $w$ satisfying \eqref{eq:admissible}, whether or not they
solve anything; that
is what makes the estimator usable with a solver it was not derived
from.  The same freedom means that reliability and efficiency alone say
nothing about an adaptive loop driven by $\eta$.  If the pair is a poor
approximation, the estimator faithfully reports a large error and the
marking it produces refines where that pair happens to be bad, which
need not be where the solution is singular; the resulting meshes need
not reflect the approximation needs of the exact solution.  What is needed alongside is a priori
control of both fields, so that the pair converges and the indicators
localize the error of the exact solution.  This point, and a plain
convergence proof for adaptive algorithms built on a solve-and-recover
pair of exactly this kind, are given in \cite{LiZhang2025}, where the
two steps are treated as one combined problem in the framework of
\cite{Siebert2011}; see also \cite{FuhrerPraetorius2020}.

\end{remark}

\section{Numerical experiments}
\label{sec:numerics}

The experiments below test the points on which the analysis rests: that
the assembly of the singular field is accurate, that the modified flux
is genuinely an $L^p$ object and not an $L^2$ one in disguise, that the
method is indifferent to where $x_0$ sits relative to the mesh, and that
the residual estimator remains numerically effective when the canonical
interpolant used in the a priori analysis does not exist.
Section~\ref{subsec:num-problems} sets up three problems in which
$\bsigma\notin L^2(\O)^2$, Section~\ref{subsec:num-adaptive} reports
the adaptive runs, Section~\ref{subsec:num-uniform} the uniform ones,
where the exponent $p$ becomes visible, and
Section~\ref{subsec:num-extras} what the convergence orders do not
show.

\subsection{Setting}
\label{subsec:num-setting}

All computations use the $\RT_0$--$P_0$ pair \eqref{eq:mixed-discrete}
in two dimensions, and in every experiment $f_1=0$ and
$g=u_{\rm ex}|_{\p\O}$.  The adaptive runs and the meshes they produce
use $p=1.2$, so that $p'=6$; in Section~\ref{subsec:num-uniform} the
same discrete solutions are evaluated in the $L^{1.5}$ and $L^{1.8}$
norms as well, the discrete problem itself not involving $p$.  Two consequences of $f_1=0$ are
used throughout.  First, \eqref{eq:discrete-conservation} gives
$\gradt\bsigma_h=Q_hf_1=0$ exactly, so the discrete flux is exactly
equilibrated and the data oscillation term of \eqref{eq:estimator}
vanishes:
\begin{equation}\label{eq:eta-num}
  \eta=\bigl\|A^{-1/2}(\bsigma_h+\bff_2)+A^{1/2}\nabla u_c\bigr\|_{L^p(\O)} .
\end{equation}
Second, $\gradt\bsigma=0$, so the whole of the error lives in the
constitutive relation and, if Assumption~\ref{ass:A3} holds at $q=p$,
\eqref{eq:estimator-equiv} reads
\begin{equation}\label{eq:equiv-num}
  \|\bsigma-\bsigma_h\|_{L^p(\O)}+\|u-u_c\|_{W^{1,p}(\O)}\simeq\eta .
\end{equation}
The effectivity indices below test \eqref{eq:equiv-num} numerically for
the coefficients used here.
The potential is $w=u_c$ of
Section~\ref{subsec:potential-field}, the continuous
piecewise linear solution of \eqref{eq:conforming-potential} on the same
mesh; the datum $g$ is not piecewise linear, so $u_c$ carries its nodal
interpolant and \eqref{eq:admissible} holds with $g$ replaced by that
interpolant.  The
equivalence \eqref{eq:equiv-num} and the effectivity indices reported
below are therefore those of the interpolated-boundary problem; no
boundary-data oscillation term is included, and the difference between
the two problems is not quantified here.  Adaptive meshes are produced by D\"orfler marking on
the indicators $\eta_K$ with $\theta=0.25$ followed by newest-vertex
bisection.  Since the meshes are graded, orders are reported with
respect to $N^{-1/2}$, $N$ the number of degrees of freedom; on a
quasi-uniform family $N\simeq h^{-2}$, so an order $1$ in $N^{-1/2}$ is
an order $1$ in $h$.

The only non-polynomial integrals in the assembly are those against
$\bff_2$.  They are computed on a Duffy map with the pole at the
collapsed vertex, splitting the element that contains $x_0$ into the
three triangles with apex $x_0$ when $x_0$ is not a vertex.  Under that
map the integrand of \eqref{eq:eta-num} behaves like $r^{1-p}$, and the
substitution $r=t^{1/(2-p)}$ removes the endpoint singularity exactly;
twenty Gauss points in each direction are used throughout.

\begin{remark}[Verification on the matched problem]
\label{rem:num-matched}
For $A=I$ and $u_{\rm ex}=-\frac1{2\pi}\log|x-x_0|$ one has
$\bff_2=-\nabla u_{\rm ex}$, so $\bsigma\equiv0$ by
\eqref{eq:modified-flux} and the discrete solution is available in
closed form.  Subtracting \eqref{eq:mixed-discrete} from the same
identity satisfied by the exact pair gives
$(\bsigma_h,\btau_h)=(u_h-u,\gradt\btau_h)$ for every
$\btau_h\in\RT_0(\cT_h)$, whence $\bsigma_h=0$ and $u_h=Q_hu$,
both exactly.  The computation reproduces this: on the final mesh
$(\bsigma_h,\bsigma_h)^{1/2}=6.6\cdot10^{-15}$, so what the flux error
measures there is the quadrature of the singular load, against an exact
zero.  Moreover \eqref{eq:eta-num} reduces to
$\eta=\|\nabla(u-u_c)\|_{L^p(\O)}$, which coincides exactly with the
denominator of \eqref{eq:num-eff}, and the computed effectivity index is
$1$ to six digits.  This is a test of the assembly, not of the theory:
there is no singular flux left to approximate.  The experiments below
are the ones in which there is.
\end{remark}

\subsection{Three problems with \texorpdfstring{$\bsigma\notin L^2$}{sigma not in L2}}
\label{subsec:num-problems}

We use the three coefficients of Table~\ref{tab:num-problems}.  In each
case $f_1=0$, the exact solution is known, and the modified flux
\eqref{eq:modified-flux} takes the form
\begin{equation}\label{eq:num-common}
  \bsigma(r,\theta)=\frac1rS(\theta),\qquad S\not\equiv0 ,
\end{equation}
so that $\bsigma\in L^p(\O)^2$ for every $p<2$ and
$\bsigma\notin L^2(\O)^2$.  Two things follow at once.  The situation
\eqref{eq:sigma-Lp-only} is realized here, not just possible; and since
\eqref{eq:leading-singular-part} holds with $\bq=\bsigma$ and
$\widetilde{\bsigma}=0$ near the pole, the lower bound
\eqref{eq:global-lower} applies to these fields directly.  On a
quasi-uniform family it forbids any order better than $h^{2/p-1}$, and
the runs below are read against it.

\begin{table}[htbp]
\centering
\small
\begin{tabular}{@{}llll@{}}
\toprule
 & $A$ & $\O$, $x_0$ & mismatch caused by\\
\midrule
(I) & $\alpha I$, $\alpha=100$ and $1$ across $\Gamma$
    & $(-1,1)^2$, $x_0=(0,0)\in\Gamma$ & the jump in $\alpha$\\
(A) & $\begin{pmatrix}4&1\\1&2\end{pmatrix}$, constant
    & $(-1,1)^2$, $x_0=(0,0)$ or $(\tfrac23,0)$ & the anisotropy of $A$\\
(S) & \eqref{eq:sector-A} with $a=4$
    & $(0,1)^2$, $x_0=(\tfrac12,\tfrac12)$ & both\\
\bottomrule
\end{tabular}
\caption{The three test problems.  In (I) the interface is
$\Gamma=\{x_1=0\}$ and the pole lies on it; in (S) the coefficient is
$\mathrm{diag}(4,1)$ on $0<\theta<\pi/2$ and $I$ elsewhere, so that it
jumps across the two rays $\theta=0$ and $\theta=\pi/2$ issuing from the
pole.
Problem (A) is run twice, with $x_0$ at a mesh vertex and at the
barycenter of an initial element.}
\label{tab:num-problems}
\end{table}

In (I) the exact solution is $-\frac{1}{101\pi}\log|x-x_0|$ and
\begin{equation}\label{eq:num-I-sigma}
  \bsigma=\Bigl(\frac{\alpha(x)}{101\pi}-\frac{1}{2\pi}\Bigr)
          \frac{x-x_0}{|x-x_0|^2}
        =\pm\frac{99}{202\pi}\,\frac{x-x_0}{|x-x_0|^2} ,
\end{equation}
with the plus sign for $x_1<0$.  The coefficient is scalar on each side,
so this is the mildest configuration in which
$\bsigma\notin L^2(\O)^2$: the mismatch is caused by the jump alone.  In
(A), writing $r_A(x)^2=(x-x_0)^TA^{-1}(x-x_0)$ and taking the
anisotropic fundamental solution
$u_{\rm ex}=-\frac{1}{2\pi\sqrt{\det A}}\log r_A$,
\begin{equation}\label{eq:num-A-sigma}
  \bsigma=\frac{1}{2\pi}
   \Bigl(\frac{1}{\sqrt{\det A}\,r_A(x)^2}-\frac{1}{|x-x_0|^2}\Bigr)(x-x_0),
  \qquad
  S(\theta)=\frac{1}{2\pi}\Bigl(\frac{1}{\sqrt{\det A}\,q(\theta)}-1\Bigr)
            v(\theta),
\end{equation}
with $v(\theta)=(\cos\theta,\sin\theta)$ and $q=v^TA^{-1}v$.  Here
$S\equiv0$ would force $q$ constant, that is $A^{-1}$ a multiple of the
identity; the coefficient is not, so $S\not\equiv0$.  This is the
mechanism of Section~\ref{subsec:phiA} in its plainest form: $\bff_2$
carries the right Dirac mass and nothing else, and what it fails to
cancel is exactly the anisotropy of $A$ at the pole.  In (S) the exact
solution is \eqref{eq:sector-solution} translated to $x_0$, and
$u_{\rm ex}$, $\bsigma$ enter only through the boundary datum and the
error computation; the discrete problem uses $\bff_2$ alone in all three
cases.

The meshes are fitted to the interfaces of (I) and (S) without any
special construction.  In both, the longest edge of each initial element
has its midpoint on an interface ray, so the first bisection joins that
midpoint to $x_0$ and puts the interface on the skeleton, where
newest-vertex bisection keeps it.

\begin{figure}[htbp]
\centering
\begin{minipage}{0.44\textwidth}\centering
  \includegraphics[width=\linewidth]{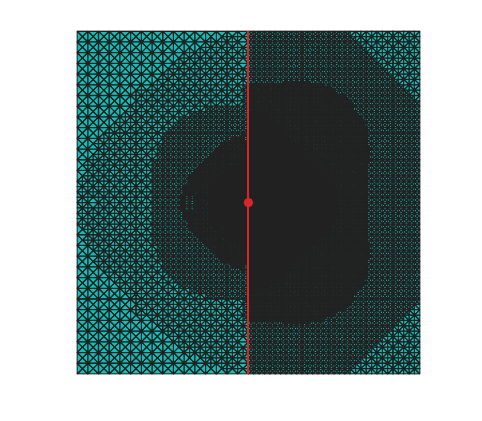}\\[2pt]
  {\scriptsize (I) interface}
\end{minipage}\hfill
\begin{minipage}{0.44\textwidth}\centering
  \includegraphics[width=\linewidth]{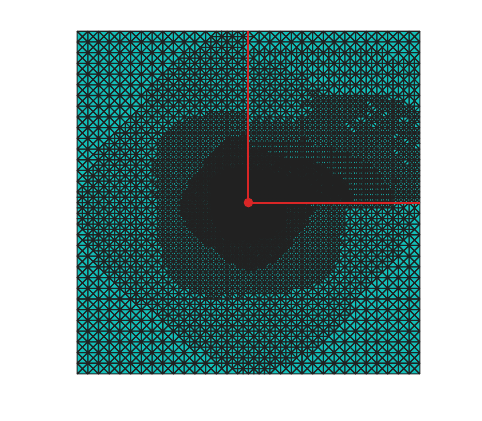}\\[2pt]
  {\scriptsize (S) angular jump}
\end{minipage}

\vspace{6pt}

\begin{minipage}{0.44\textwidth}\centering
  \includegraphics[width=\linewidth]{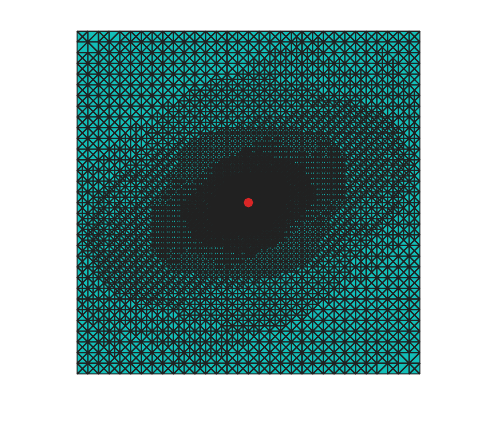}\\[2pt]
  {\scriptsize (A), $x_0$ a vertex, bisection}
\end{minipage}\hfill
\begin{minipage}{0.44\textwidth}\centering
  \includegraphics[width=\linewidth]{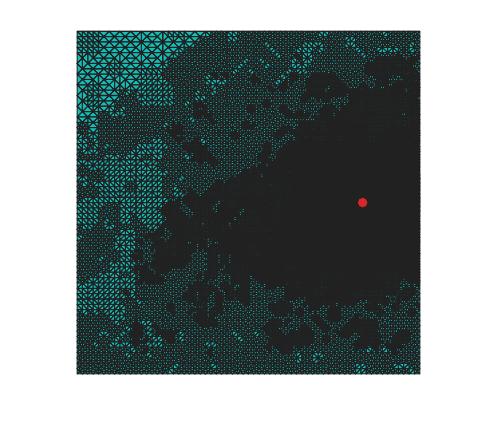}\\[2pt]
  {\scriptsize (A), $x_0$ interior, red--green--blue}
\end{minipage}
\caption{Adaptive meshes for the problems of
Table~\ref{tab:num-problems}, with the interfaces and the pole marked.
The two lower panels carry the same coefficient and differ in the
position of the pole and in the refinement rule; they produce the two
rows for (A) in Table~\ref{tab:num-summary}.  In both, the refined
region is tilted by $\pi/8$ and four-lobed, because
$A=R_{\pi/8}\,\mathrm{diag}(3+\sqrt2,3-\sqrt2)\,R_{\pi/8}^{T}$ exactly
and $S$ of \eqref{eq:num-A-sigma} vanishes on four rays, at
$\arctan\bigl((\lambda_2/\lambda_1)^{1/4}\bigr)=37.7^\circ$ from each principal
axis.  Nothing in the discrete problem refers to those directions:
$\bff_2$ is radially symmetric and the marking sees only $\eta_K$.}
\label{fig:num-meshes}
\end{figure}

\subsection{Adaptive results}
\label{subsec:num-adaptive}

Table~\ref{tab:num-summary} collects the four runs.  All orders lie
between $0.98$ and $1.07$, that is at $N^{-1/2}$, against the
quasi-uniform exponent $2/p-1=2/3$ of Corollary~\ref{cor:global-rate};
what removes the loss is the refinement at the pole, as
Remark~\ref{rem:graded} anticipates.

\begin{table}[htbp]
\centering
\small
\begin{tabular}{@{}lccccc@{}}
\toprule
& \multicolumn{4}{c}{order in $N^{-1/2}$} &\\
\cmidrule(r){2-5}
problem & $\eta$ & $\|A^{-1/2}(\bsigma-\bsigma_h)\|_{L^p}$
        & $\|A^{1/2}\nabla(u-u_c)\|_{L^p}$ & $\|u-u_h\|_{L^2}$
        & $\mathrm{eff}$\\
\midrule
(I)                 & $1.00$ & $0.99$ & $1.04$ & $1.07$ & $0.956$\\
(A), $x_0$ a vertex & $1.00$ & $0.98$ & $1.00$ & $1.01$ & $0.828$\\
(A), $x_0$ interior & $1.01$ & $1.00$ & $1.01$ & $0.99$ & $0.834$\\
(S)                 & $1.02$ & $1.01$ & $1.01$ & $1.02$ & $0.853$\\
\bottomrule
\end{tabular}
\caption{Adaptive runs, $p=1.2$, $\theta=0.25$.  Here
$\mathrm{eff}$ is the effectivity index \eqref{eq:num-eff}.  The
interior run of (A) uses red--green--blue refinement.}
\label{tab:num-summary}
\end{table}

The estimator is assembled as an $\ell^p$ combination of the two
residuals, so the error side is combined the same way.  For the reported
index we use the $A$-weighted gradient form of the augmented error,
equivalent to the left-hand side of \eqref{eq:equiv-num} by ellipticity
and Poincar\'e's inequality:
\begin{equation}\label{eq:num-eff}
  \mathrm{eff}
  :=\frac{\eta}
     {\bigl(\|A^{-1/2}(\bsigma-\bsigma_h)\|_{L^p(\O)}^p
           +\|A^{1/2}\nabla(u-u_c)\|_{L^p(\O)}^p\bigr)^{1/p}} .
\end{equation}
With $f_1=0$ the constitutive identity
$A^{-1/2}(\bsigma+\bff_2)+A^{1/2}\nabla u=0$ turns \eqref{eq:eta-num}
into
$\eta=\|A^{-1/2}(\bsigma-\bsigma_h)+A^{1/2}\nabla(u-u_c)\|_{L^p(\O)}$,
the two components entering pointwise, so by the triangle inequality and
$a+b\le2^{1-1/p}(a^p+b^p)^{1/p}$ the index cannot exceed $2^{1-1/p}$,
which is $1.12$ at $p=1.2$.  The reverse bound, that the two components
cannot cancel, is the content of Theorem~\ref{thm:estimator} and rests
on Assumption~\ref{ass:A3} at $q=p=1.2$, which is not established here
for these coefficients at that exponent.  The effectivity column of
Table~\ref{tab:num-summary} tests it: the index stays between $0.83$ and
$0.96$, so the two components do not cancel in any of the runs.  It is also stable in
$h$; the corresponding column of Table~\ref{tab:num-uniform} varies in
the third digit over a factor $16$ in the mesh size.

The two rows for (A) differ in the position of the pole and in the
refinement rule, and agree to $0.02$ in every reported order, with very
similar effectivity indices.  Neither configuration is neutral for a
method that carries $\delta_{x_0}$ in the conservation law, and they are
awkward for different reasons.  When $x_0$ is a vertex the point
evaluation $\delta_{x_0}(v_h)=v_h(x_0)$ is not canonical for a
discontinuous $v_h$, which has several values there; the a priori
analysis of \cite{HoustonWihler2012} is carried out under the hypothesis
that $x_0$ lies in the interior of an element, and the hybridizable
method of \cite{LengChen2022} replaces the load by the average of
$v_h|_K(x_0)$ over the elements containing $x_0$.  When $x_0$ is
interior the point value is canonical, but the residual estimator of
\cite{LengChen2022} then carries an extra term $h_K^{4-d}$ on the
element containing the pole, absent when $x_0$ is a node, because the
Lagrange interpolant of the dual solution is exact at nodes and not
elsewhere.  In \eqref{eq:mixed-discrete} and \eqref{eq:estimator} no
point value occurs and no term is added or removed: the same formulas
produce the two rows above.

\subsection{Uniform refinement and the exponent}
\label{subsec:num-uniform}

The rates of Corollaries~\ref{cor:global-rate} and
\ref{cor:scalar-rate} are statements about quasi-uniform families,
and on the graded meshes above they are invisible: every exponent
returns the order $1$ of Table~\ref{tab:num-summary}.  We therefore
repeat problem~(I) on a uniformly refined family, with $h$ halved at
each of five levels from $h=1/8$ to $h=1/128$.

The discrete problem does not involve $p$: by \eqref{eq:mixed-discrete}
the pair $(\bsigma_h,u_h)$, and with it $u_c$, is determined by the mesh
alone, and $p$ enters only the norms.  Each mesh is therefore solved
once and all three exponents are evaluated on that one solution.

\begin{table}[htbp]
\centering
\small
\begin{tabular}{@{}lccccc@{}}
\toprule
$p$ & $\|A^{-1/2}(\bsigma-\bsigma_h)\|_{L^p(\O)}$ & $2/p-1$
    & $\eta$ & $\|A^{1/2}\nabla(u-u_c)\|_{L^p(\O)}$ & $\mathrm{eff}$\\
\midrule
$1.2$ & $0.639$ & $0.667$ & $0.637$ & $0.632$ & $0.951$--$0.953$\\
$1.5$ & $0.332$ & $0.333$ & $0.332$ & $0.332$ & $0.9750$--$0.9752$\\
$1.8$ & $0.111$ & $0.111$ & $0.111$ & $0.111$ & $0.9911$\\
\bottomrule
\end{tabular}
\caption{Problem~(I) on uniform meshes: observed orders in $h$ against
the predicted flux exponent $2/p-1$, and the range of the effectivity
index over the five levels.  Each order is the mean of the last two
successive-level increments.  The scalar error is $p$-independent and
has order $1.00$.}
\label{tab:num-uniform}
\end{table}

\begin{figure}[htbp]
\centering
\begin{minipage}{0.46\textwidth}\centering
  \includegraphics[width=\linewidth]{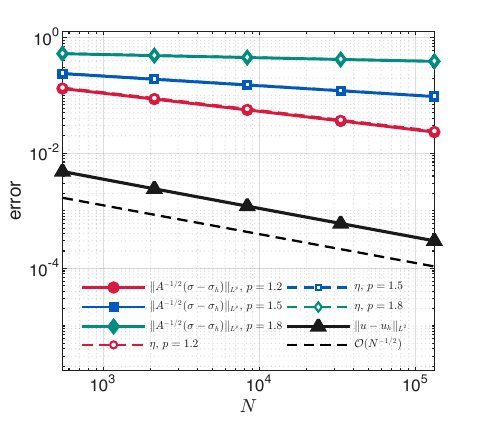}
\end{minipage}\hfill
\begin{minipage}{0.46\textwidth}\centering
  \includegraphics[width=\linewidth]{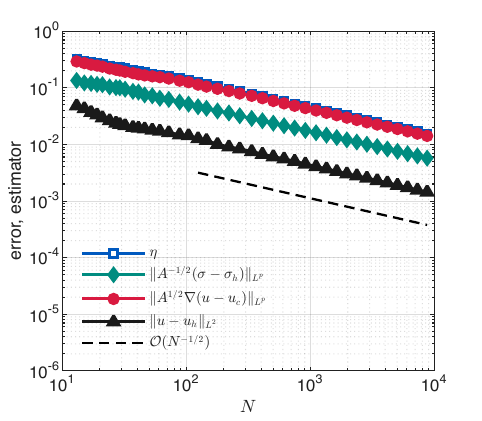}
\end{minipage}
\caption{Left: problem~(I) on uniform meshes.  For each exponent the
flux error and the estimator are drawn in the same colour and are barely
distinguishable; the three pairs separate according to
Table~\ref{tab:num-uniform}, while the scalar error, lowest curve, is
the same for all three.  Right: problem~(S) on adaptive meshes, with the
reference slope $N^{-1/2}$.}
\label{fig:num-convergence}
\end{figure}

At $p=1.5$ and $p=1.8$ the measured flux order agrees with $2/p-1$ to
three decimals; at $p=1.2$ it is $0.639$ against $0.667$ and rises along
the sequence, the level-by-level values being $0.607$, $0.623$, $0.635$,
$0.643$.  That the deficit is largest at the smallest exponent is
expected.  The singular and far-field contributions to the fitted-mesh
estimate enter the $p$-th power as $h^{2-p}$ and $h^{p}$, so that
relative to the singular one the smooth far-field correction is of order
$h^{2p-2}$, which is $h^{0.4}$ at $p=1.2$ and $h^{1.6}$ at $p=1.8$; the
scaling is consistent with the much slower approach to the asymptotic
slope at the smaller exponent.

The scalar variable behaves in the opposite way: its order is $1.00$
and does not depend on $p$.  This is what \eqref{eq:scalar-refined}
predicts once the flux rate is inserted, the cancellation being exact:
$h^{2/p'}\|\bsigma-\bsigma_h\|_{L^p(\O)}\simeq h^{2/p'}h^{2/p-1}=h$,
because $1/p+1/p'=1$.  The measured decay is moreover free of any
logarithm: over the last three levels the ratio of successive errors is
$0.500$ at both levels, whereas the best-approximation term
$\|u-Q_hu\|_{L^2(\O)}$, which carries $|\log h|^{1/2}$ by
Proposition~\ref{prop:P0-log}, would give $0.548$ and an apparent order
near $0.87$.  By \eqref{eq:pythagoras} the two contributions add in
squares, so what the measurement says is that in problem~(I) the
discrete one dominates at these mesh sizes.
Corollary~\ref{cor:scalar-rate} is what holds asymptotically; at the
mesh sizes reached here, problem~(I) measures the term the duality
argument estimates.
  This also separates the two scalar estimates: on a
quasi-uniform family Theorem~\ref{thm:scalar-basic} bounds
$\|Q_hu-u_h\|_{L^2(\O)}$ by $C\|\bsigma-\bsigma_h\|_{L^p(\O)}$ and so
gives only the weaker bound $O(h^{2/p-1})$, of order $0.667$ at
$p=1.2$, whereas Theorem~\ref{thm:scalar-refined} improves this to
$O(h)$; the measured order $1.00$ exhibits the sharper estimate.  No convergence at all is what the naive argument of
Remark~\ref{rem:why-inverse-fails} would predict, the inverse estimate
turning the flux order into $O(1)$; Lemma~\ref{lem:discrete-Lp-lifting}
and Corollary~\ref{cor:discrete-Lp-infsup} exist to avoid exactly that.

\subsection{What the orders do not show: the discrete flux diverges in
\texorpdfstring{$L^2$}{L2}}
\label{subsec:num-extras}

The same computation that produces a convergent
$\|\bsigma-\bsigma_h\|_{L^p(\O)}$ produces a divergent
$\|A^{-1/2}\bsigma_h\|_{L^2(\O)}$, and the exact flux says at what rate.
By \eqref{eq:num-A-sigma},
$\bsigma^{T}A^{-1}\bsigma=(2\pi r)^{-2}g(\theta)$ with
$g=\bigl(({\sqrt{\det A}\,q})^{-1}-1\bigr)^{2}q\ge0$, so that for any
fixed small $r_0$
\[
  \|A^{-1/2}\bsigma\|_{L^2(B_{r_0}(x_0)\setminus B_h(x_0))}^2
  =\frac{1}{4\pi^2}\int_0^{2\pi}g(\theta)\,d\theta\int_h^{r_0}\frac{dr}{r}
\]
and therefore
\begin{equation}\label{eq:num-L2-law}
  \|A^{-1/2}\bsigma\|_{L^2(\O\setminus B_h(x_0))}\simeq|\log h|^{1/2} ,
\end{equation}
provided $g\not\equiv0$.  It vanishes identically exactly when $q$ is
constant, that is in the matched case of
Remark~\ref{rem:num-matched}, where the norm stays at zero.

\begin{table}[htbp]
\centering
\begin{tabular}{@{}rccc@{}}
\toprule
$N$ & $h_{\min}$ & $\|A^{-1/2}\bsigma_h\|_{L^2(\O)}$
    & $\|A^{-1/2}\bsigma_h\|_{L^2(\O)}\,|\log h_{\min}|^{-1/2}$\\
\midrule
$113$   & $1.56\cdot10^{-2}$ & $0.1960$ & $0.0961$\\
$688$   & $6.91\cdot10^{-4}$ & $0.2531$ & $0.0938$\\
$4100$  & $3.05\cdot10^{-5}$ & $0.3009$ & $0.0933$\\
$8815$  & $7.63\cdot10^{-6}$ & $0.3196$ & $0.0931$\\
\bottomrule
\end{tabular}
\caption{Problem~(A), $x_0$ at a vertex: the $L^2$ norm of the discrete
flux, and the same norm divided by $|\log h_{\min}|^{1/2}$.}
\label{tab:num-L2}
\end{table}

Table~\ref{tab:num-L2} is consistent with the same logarithmic scale for
the discrete flux.  The norm itself grows by a factor of $1.6$ while
$h_{\min}$ falls by more than three orders of magnitude; divided by
$|\log h_{\min}|^{1/2}$ it moves by three percent over the same range,
the scale of \eqref{eq:num-L2-law}.  For one and
the same discrete flux, then, the $L^p$ error decays at the observed
order $0.98$ while the $L^2$ norm grows in step with the exact
logarithmic scale $|\log h_{\min}|^{1/2}$.  No error analysis carried
out in $L^2$ can describe this computation, and the divergence is not a
defect of the method: $\bsigma_h$ converges, in the norm the modified
flux possesses.

\section{Concluding remarks}
\label{sec:conclusions}

The change made in this paper is small and its consequences are not.
The divergence-form splitting \eqref{eq:f2}--\eqref{eq:modified-flux}
rewrites \eqref{eq:deltaeq} as \eqref{eq:divform}, an elliptic equation with data in divergence
form; the Dirac measure is gone and what stands in its place is a
standard divergence-form right-hand side for a second-order elliptic
problem.  The
identity is exact and $\bff_2$ is explicit, so nothing is approximated
and nothing is recomputed when the coefficient changes.  What is paid,
in the unmatched regime this analysis is written for, is that the
modified flux lies below the Hilbert scale and the error analysis has to
follow it there.

Under Hypothesis~(SP) and the local structure assumptions on $\bsigma$,
the modified flux satisfies an $L^p$ quasi-best approximation bound and
the rate $h^{2/p-1}$, sharp on quasi-uniform families when the leading
$|x-x_0|^{-1}$ mismatch does not vanish, and first-order complexity on
the graded family.  Under dual regularity the projected scalar error is
first order, and in the nonvanishing logarithmic regime the total scalar
error is asymptotically the best piecewise constant approximation of
$u$.  The residual norm equivalence yields a computable $L^p$ estimator,
reliable and locally efficient for the augmented flux--potential pair.

\subsection*{Other discretizations}

The scalar equation \eqref{eq:divform} is ready for other methods, as
Remark~\ref{rem:other-methods} says, and this is where we expect the
substitution to be worth more than it is worth here.  A conforming, a
discontinuous Galerkin, a hybridizable, a finite volume or a
nonconforming method applied to \eqref{eq:divform} is a method applied
to an equation with data in $W^{-1,p}$ of divergence form, a familiar
data class; the Dirac measure never enters, and neither does anything
built to accommodate it.  Method-specific stability and error analysis
are of course still required.  What
is missing is not the formulation but the analysis, and it is not the
same analysis in each case: for a broken scalar space the residual norm equivalence of
Section~\ref{subsec:residual-equivalence} acquires jump contributions
and the counterpart of Hypothesis~(SP) has to be identified for the
corresponding discrete kernel.  Those cases will be treated in separate
papers.

\subsection*{General second-order operators}

For the full operator
\[
  \mathcal Lu=-\gradt\bigl(A\nabla u+\bb\,u\bigr)+cu=f_1+\delta_{x_0} ,
  \qquad \bb\in L^\infty(\O)^2,\ c\in L^\infty(\O) ,
\]
the splitting itself is unaffected (it is stated for this operator
already in \cite[Sec.~4]{Zhang2023}), and this is where its
independence of the coefficient pays off: $\bff_2$ is required to carry
the Dirac mass, not to reproduce the singular structure of $\mathcal L$,
so setting $\bsigma_{\rm phys}:=-(A\nabla u+\bb\,u)$ and
$\bsigma:=\bsigma_{\rm phys}-\bff_2$ again
removes the measure from the conservation law, with no discrete delta
and nothing to recompute.  The a posteriori theory of
Section~\ref{sec:aposteriori} keeps its structure once the
corresponding $W^{1,p}$ stability is available: the singular lifting
enters the constitutive residual, while the balance residual stays an
ordinary $L^p$ quantity.  The full operator
will be treated in a separate paper.


\end{document}